\documentclass[12pt]{article}
\usepackage{graphicx}
\usepackage{xcolor}

\usepackage{amsmath, amsthm}
\usepackage{graphicx}

\usepackage{hyperref}
\usepackage{float}

\newtheorem{theorem}{Theorem}
\newtheorem{proposition}[theorem]{Proposition}
\newtheorem{lemma}[theorem]{Lemma}
\newtheorem{corollary}[theorem]{Corollary}
\newtheorem{definition}[theorem]{Definition}

\title{Corner Rectangle Visibility Graphs}

\author{Juni L. DeYoung \\ \small{Mathematics Department} \\ \small{Willamette University}  \\ \small{Salem, OR} \and Joshua D. Laison \\ \small{Mathematics Department} \\ \small{Willamette University}  \\ \small{Salem, OR} \and Junyi Li \\ \small{Department of Electrical \& Systems Engineering} \\ \small{Washington University in St.~Louis} \\ \small{St.~Loius, MO} \and Lani Southern \\ \small{Department of Mathematics} \\ \small{UC San Diego} \\ \small{La Jolla, CA}}

\date{}

\begin{document}

\maketitle

\begin{abstract}
We introduce corner rectangle visibility graphs (CRVGs), a combination of two geometrically defined classes of graphs: rectangle visibility graphs (RVGs) and rectangle-of-influence graphs (RIGs). 
A CRVG has vertices represented by axis-parallel rectangles in the plane, and edges represented by axis-parallel rectangles with one corner at a corner of a vertex-rectangle, an opposite corner at the boundary of another vertex-rectangle, and no vertex-rectangles in their interiors.  
We also consider CRVGs that only see in one or two directions (south CRVGs and southwest CRVGs).

We prove that south CRVGs have at most $\left[\frac{n^2}{4}\right]+n-2$ edges, and this bound is tight. This is the same as the tight edge bound for closed RIGs, but they are different graph classes. We also show that southwest CRVGs have at most $\left[\frac{n^2}{3}+\frac{n}{3}\right]-1$ edges, and this bound is tight.  We prove that CRVGs on $n$ vertices have at most $e$ edges, where $\lfloor \frac{3n^2}{8} \rfloor \leq e \leq \lfloor \frac{2n^2}{5} \rfloor$. Finally, we classify several families of graphs as CRVGs, SCRVGs, and SWCRVGs.

\end{abstract}

\section{Introduction}

\subsection{Rectangle Visibility Graphs}

\begin{definition}
    Let $R$ be a set of rectangles in the plane which have axis-parallel sides and do not intersect. Two rectangles are said to \textbf{see} each other if there is a vertical or horizontal band of positive width which intersects both rectangles and no other rectangles in $R$. Construct a graph $G$ with a vertex for each rectangle in $R$ and an edge between two vertices if their corresponding rectangles see each other. Then $G$ is said to be a \textbf{rectangle visibility graph}, or \textbf{RVG}, and $R$ is a \textbf{rectangle visibility representation} of $G.$  An example of an RVG and a rectangle visibility representation of it are shown in Figure~\ref{fig:maximal-RVG}.
\end{definition}

Rectangle visibility graphs and several variations on them have been well studied. Since Wismath proved that every planar graph is a rectangle visibility graph in 1989 \cite{wismath1989bar}, many authors have studied non-planar RVGs.  A graph $G$ has \textbf{thickness} $k$ if $k$ is the smallest integer such that $G$ is the union of $k$ planar graphs \cite{westgraphtheory}.  In 1999 Hutchinson, Shermer, and Vince proved that rectangle visibility graphs have thickness at most two, and also gave a bound on the number of edges in an RVG: a rectangle visibility graph on $n$ vertices has at most $6n-20$ edges, and the bound is tight for $n \geq 7$ 
\cite{hutchinson1999representations}.  As a corollary, they also obtained that $K_8$ is the largest complete RVG.  Figure~\ref{fig:maximal-RVG} shows a maximal RVG with 9 vertices, and deleting vertex $i$ in this graph yields an RVG representation of $K_8$.  The black dashed lines show some of the bands of sight between rectangles, and the red dashed line shows the only missing edge in the graph. In 1997, Dean and Hutchinson proved that a bipartite graph $K_{p,q}$ with $p \leq q$ is an RVG if and only if $p \leq 4.$  Dean and Hutchinson also proved that a bipartite RVG on $n$ vertices has at most $4n-12$ edges \cite{dean1997rectangle}.  Several variations of RVGs have been studied, including visibility of 3-dimensional boxes~\cite{bose99} and visibility of polyomino-like shapes~\cite{digiacomo18}.  Note that by Proposition~\ref{complete_bipartite} below, the corner rectangle visibility graphs defined in this paper have arbitrarily large thickness.

\begin{figure}
        \centering
        \includegraphics[height=4.5cm]{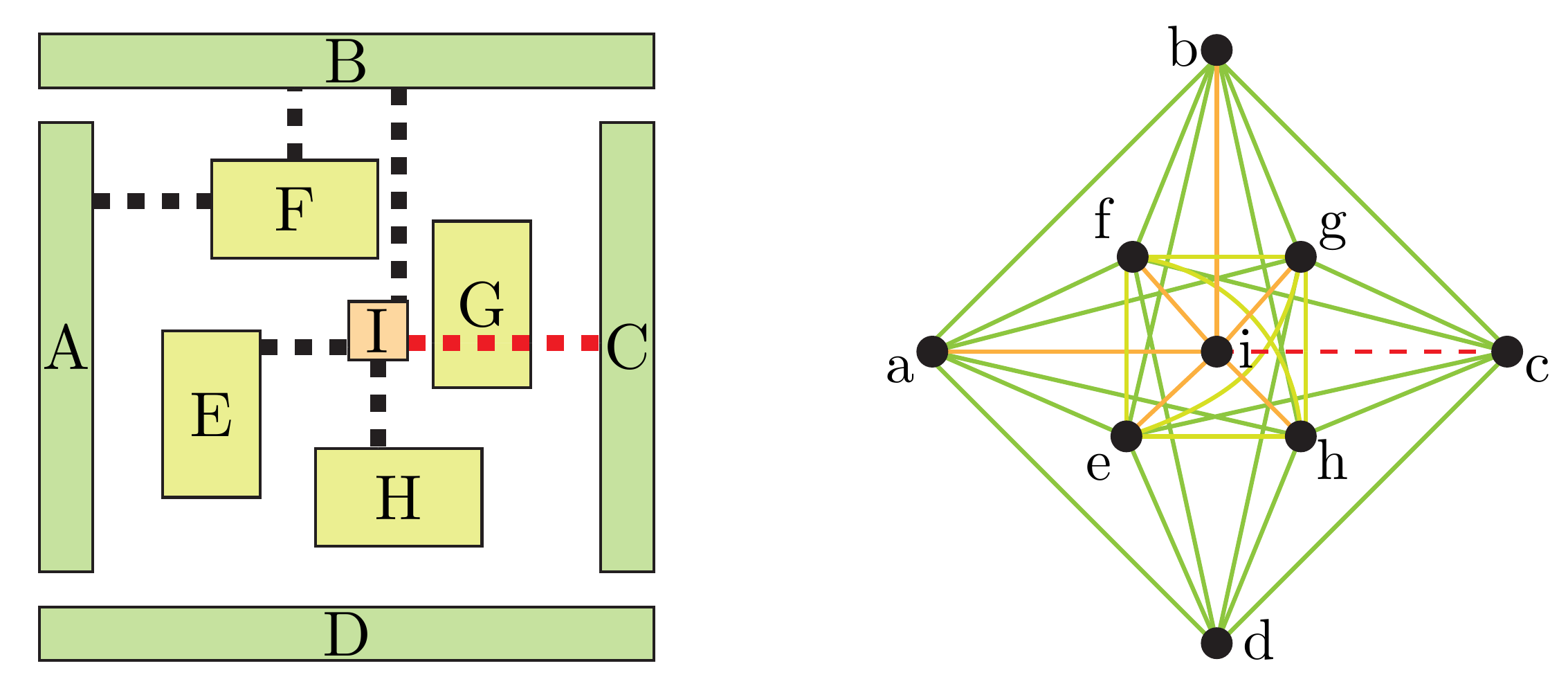}
        \caption{A maximal RVG on 9 vertices.  The black dashed lines show some of the bands of sight between rectangles, and the red dashed line shows the only missing edge in the graph.}
\label{fig:maximal-RVG} 
\end{figure}

\subsection{Rectangle of Influence Graphs}

\begin{definition}
     Given a set of points $P$ in the plane, and two points $A$ and $B$ in $P$, the \textbf{rectangle of influence} $R_{AB}$ between them is the rectangle with axis-parallel sides which has $A$ and $B$ on opposite corners. If $R_{AB}$ contains no other points in $P$, $A$ and $B$ are said to be \textbf{separated}.  If $R_{AB}$ contains its boundary then $A$ and $B$ are separated by a \textbf{closed} rectangle.  If $R_{AB}$ doesn't contain its boundary then $A$ and $B$ are separated by an \textbf{open} rectangle.
    \end{definition}

    \begin{definition}
    Let $P$ be a set of points in the plane, and let $G$ be a graph whose vertices correspond to points of $P$ in which vertices $a$ and $b$ are adjacent if and only if their corresponding points $A$ and $B$ are separated. Then $G$ is said to be a (\textbf{strong}) \textbf{rectangle of influence graph} (\textbf{RIG}).  If the rectangles of separation in $P$ are open, then $G$ is an \textbf{open} RIG, and if they're closed then $G$ is a \textbf{closed} RIG. 
    \end{definition}

    In our figures, we denote an open rectangle of influence with dashed lines and a closed rectangle of influence with solid lines. 
    For example, Figure \ref{fig:ex-rig} shows a closed RIG representation of $C_4$ and an open RIG representation of $K_5.$  With open rectangles, the representation on the left would have two extra pairs of separated points, and would yield $K_4$.  

\begin{figure}
        \centering
        \includegraphics[height=3cm]{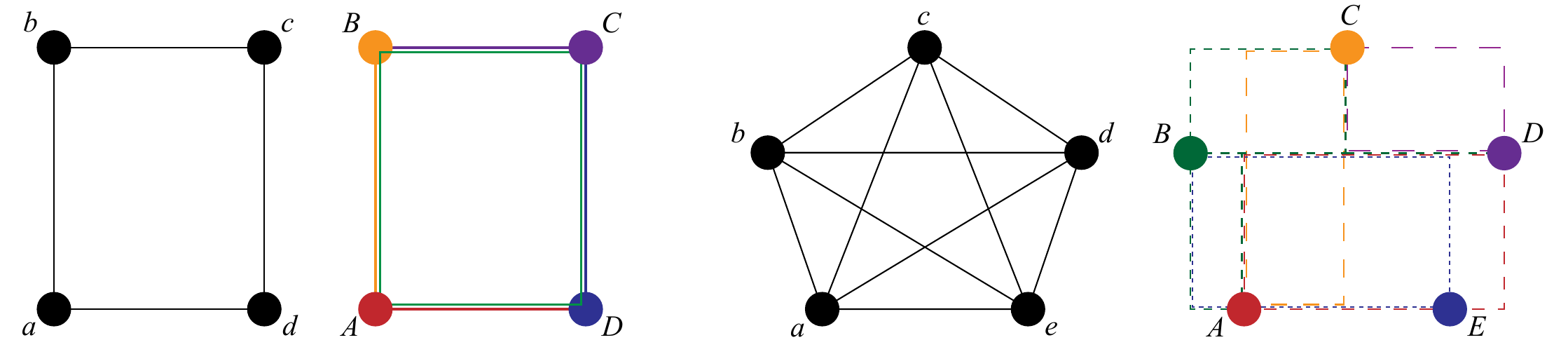}
        \caption{A closed RIG representation of $C_4$ and an open RIG representation of $K_5$.}
\label{fig:openrig}  \label{fig:closedrig} \label{fig:ex-rig}
\end{figure}

We now state a theorem bounding the number of edges in closed RIGs which is of particular interest to us. Note that in Theorem \ref{RIG_edge_bound}, $\left[\frac{n^2}{4}\right]$ means $\frac{n^2}{4}$ rounded to the nearest integer. In this case, it is equivalent to the floor function, but we keep the notation used in \cite{standardboxes}.

\begin{theorem} \label{RIG_edge_bound} (Alon, F{\"u}redi, Katchalski 1985 \cite{standardboxes}) A closed RIG on $n$ vertices has at most $\left[\frac{n^2}{4}\right]+n-2$ edges, and this bound is tight.
\end{theorem}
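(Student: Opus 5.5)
Both halves of Theorem~\ref{RIG_edge_bound} need an argument: an upper bound for every closed RIG, and a matching construction. For the upper bound, the first step is to reduce to points in general position. If two points share an $x$- or $y$-coordinate, their closed rectangle is a segment, and a third point on it blocks the pair. I would argue that perturbing coordinates so that ties are broken consistently never destroys an edge. Whether this is truly safe is not obvious, and I would check it carefully, possibly proving the bound directly with ties allowed instead.

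Next, classify the edges. Each separated pair $\{A,B\}$ is either \emph{increasing} ($B$ is to the northeast of $A$) or \emph{decreasing} ($B$ is to the northwest). The increasing separated pairs are exactly the covering pairs of the dominance order restricted to $P$: an empty closed rectangle means no point lies between $A$ and $B$ in that order. So the increasing edges form the Hasse diagram of a two-dimensional poset, and the decreasing edges form the Hasse diagram of the conjugate poset. A Hasse diagram contains no triangles. Hence, by Mantel's theorem, each class alone has at most $n^2/4$ edges. This is not enough, because we need the total to be at most $n^2/4+n-2$.

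The main obstacle is controlling the interaction between the two classes. I would try induction on $n$, removing the leftmost point $p$. Its neighbors to the east split into increasing neighbors, which form an antichain $I$ of the dominance order (a ``staircase''), and decreasing neighbors, which form a chain $D$ of the conjugate order. The target is to show that $\deg p\le \lfloor n/2\rfloor+1$ for some well-chosen extremal point, or to remove two points $p,q$ at once and show they together lose at most $n$ edges (so that $n-2$ plus the induction gives the bound). This mirrors the standard induction proof of Mantel's theorem, where one removes an edge $pq$ and uses that the other points see at most one of $p,q$ plus a bounded number of exceptions. Concretely, I would pick $p,q$ adjacent and consecutive in $x$-order. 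I would then argue that any other point $r$ adjacent to both lies in a very restricted region, and that there are $O(1)$ such exceptional $r$, which is what produces the linear term $n-2$.

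For tightness, I would construct an explicit configuration. Place $\lceil n/2\rceil$ points on a decreasing diagonal near the origin's southwest and $\lfloor n/2\rfloor$ points on a decreasing diagonal to the northeast. Choose the positions so that each point of the first group sees each point of the second with an empty rectangle, and the consecutive points within each diagonal are also separated. This gives $\lfloor n^2/4\rfloor$ edges between the groups plus $(\lceil n/2\rceil-1)+(\lfloor n/2\rfloor-1)=n-2$ edges within them. I would verify the emptiness of the rectangles by choosing explicit coordinates, such as the first group at $(i,-i)$ and the second at $(M+j,M-j)$, and then adjusting until every cross rectangle avoids the other points.
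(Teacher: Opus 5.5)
The paper only cites this theorem. Its proof of Theorem~\ref{SCRVG edge bound}, however, is explicitly adapted from it, and it has the shape you are reaching for: delete an extreme vertex together with one well-chosen neighbor, and show that at most two other vertices are adjacent to both. Your proposal has a genuine gap at exactly that step.

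\textbf{The accounting.} With $f(n)=\left[\frac{n^2}{4}\right]+n-2$ we have $f(n)-f(n-2)=n+1$. So the deleted pair may meet the edge $pq$, one edge to each of the other $n-2$ points, and at most \emph{two} further edges. ``$O(1)$ exceptions'' does not suffice, since $c$ exceptions only give $\left[\frac{n^2}{4}\right]+\frac{c}{2}n+O(1)$. Read literally, ``lose at most $n$ edges'' is unattainable: deleting points only creates separations, so in an extremal configuration every pair meets at least $n+1$ edges.

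\textbf{Your choice of $p,q$ fails.} Take $p=(0,0)$, $q=(1,\tfrac12)$ and $r_i=(1+i,-10+i\varepsilon)$ for $1\le i\le n-2$, with $\varepsilon>0$ small. Then $p,q$ are the two leftmost points and are adjacent. Yet the boxes $[0,1+i]\times[-10+i\varepsilon,0]$ and $[1,1+i]\times[-10+i\varepsilon,\tfrac12]$ are empty, so every $r_i$ is adjacent to both, and the pair meets $2n-3$ edges. Your one-point alternative, $\deg p\le\lfloor n/2\rfloor+1$, is stated with no argument.

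\textbf{A choice that works.} This mirrors the paper's $A$ with leftmost eye and $B_k$ unseen by the rest of $N(A)$. Let $p$ be the leftmost point and $q$ the neighbor of $p$ with the largest $x$-coordinate. In general position, $N(p)$ consists of:
\begin{itemize}
\item $u_1,\dots,u_a$ northeast of $p$, with $x$ increasing and $y$ decreasing;
\item $d_1,\dots,d_b$ southeast of $p$, with $x$ and $y$ both increasing. These form an antichain, not a chain, of the conjugate order, contrary to what you wrote.
\end{itemize}
Suppose $q=u_a$. For $i<a-1$, the box from $u_i$ to $q$ contains $u_{a-1}$. For $j<b$, the box from $d_j$ to $q$ contains $d_{j+1}$, because $x(d_{j+1})<x(q)$ by the choice of $q$, and $y(d_j)<y(d_{j+1})<y(p)<y(q)$. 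Hence $N(p)\cap N(q)\subseteq\{u_{a-1},d_b\}$, and at most $n+1$ edges meet $\{p,q\}$. Since $G-\{p,q\}$ is a subgraph of the closed RIG of the remaining $n-2$ points, $|E(G)|\le f(n-2)+n+1=f(n)$. The case $q=d_b$ is symmetric.

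\textbf{The rest of your plan is sound.} The perturbation step is safe. A point outside a closed box satisfies a strict inequality in some coordinate, so any sufficiently small perturbation keeps every separated pair separated and can only add edges. Your tightness construction also works as written, with no adjusting, for any $M\ge\lceil n/2\rceil$. Every cross box $[i,M+j]\times[-i,M-j]$ is empty, and each diagonal induces a path, giving $\lfloor n^2/4\rfloor+n-2$ edges.
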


By Theorem~\ref{RIG_edge_bound}, closed RIGs have an edge bound which is quadratic in the number of vertices, while the RVG edge bound is linear by the result of Hutchinson, Shermer, and Vince \cite{hutchinson1999representations}. We will follow the proof of Theorem \ref{RIG_edge_bound} to prove our edge bounds on SCRVGs and SWCRVGs in Sections~\ref{sec:SCRVGs} and \ref{sec:SWCRVGs}. 

Liotta et al classified several families of graphs as representable or non-representable using RIG representations \cite{liotta1998rectangle}. We discuss some of these in Section~\ref{classification}.

\subsection{Corner Rectangle Visibility Graphs}

Let $R$ be a set of rectangles in the plane which have axis-parallel sides and do not intersect.  For each rectangle, choose one of its corners to be an \textbf{eye}.  Rectangles with designated eyes are called \textbf{corner rectangles}. We represent the eye of a rectangle with an arrow pointing in its direction of sight. We use the four cardinal directions to label the corners of each rectangle. The \textbf{north} corner is its upper left corner, and the directions proceed \textbf{west}, \textbf{south}, and \textbf{east}, counterclockwise around the rectangle. A corner rectangle whose eye is its north corner is a $\textbf{north rectangle}.$ South, east, and west rectangles are similarly defined. We will also sometimes refer to \textbf{$D$-rectangles}, where $D$ is an unspecified direction from $\{N,W,S,E\}$.  We label the left, bottom, right, and top sides of the rectangle NW, SW, SE, and NE respectively, to indicate which corners they are between. We denote the north corner of a rectangle $A$ by $A_N$ and its $x-$coordinate by $A_N(x),$ and similarly denote the other corners of $A$ and their $x-$ and $y-$coordinates.

A corner rectangle $A$ \textbf{sees} a corner rectangle $B$ if there is a non-degenerate closed rectangle of influence intersecting $A$ exactly in its eye, $B$ in its boundary, and not intersecting any other rectangle in $R$, as shown in Figure~\ref{fig:crvg-definition}.  In this figure, note that rectangle $A$ sees only rectangles $D$ and $F$; the others are blocked from view.  In particular, the rectangle of influence from $A$ to $I$ is blocked by $H$, and the rectangle of influence from $A$ to $C$ is blocked by $A$ itself.

Equivalently, we can also think of $A$ as looking in the direction of a quarter-plane at its eye, and seeing rectangles in this quarter-plane not blocked by other rectangles casting quarter-plane shadows, as shown in Figure~\ref{fig:crvg-shadow}.

\begin{figure}
        \centering
        \includegraphics[height=7cm]{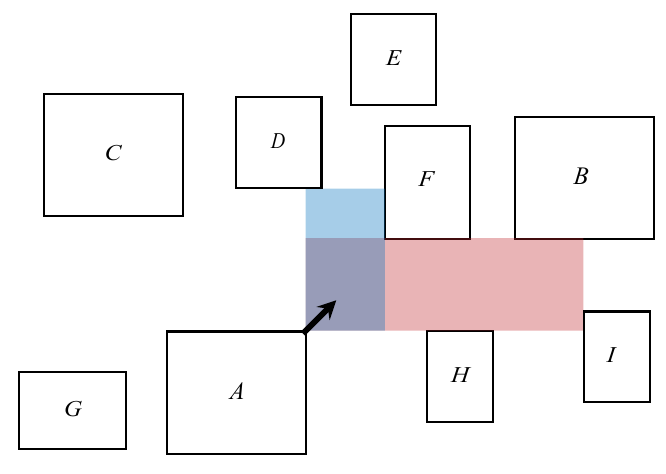}
        \caption{Examples of sight and non-sight between corner rectangles.  Rectangle $A$ can only see rectangles $D$ and $F$.}
\label{fig:crvg-definition} 
\end{figure}

\begin{figure}
        \centering
        \includegraphics[height=7cm]{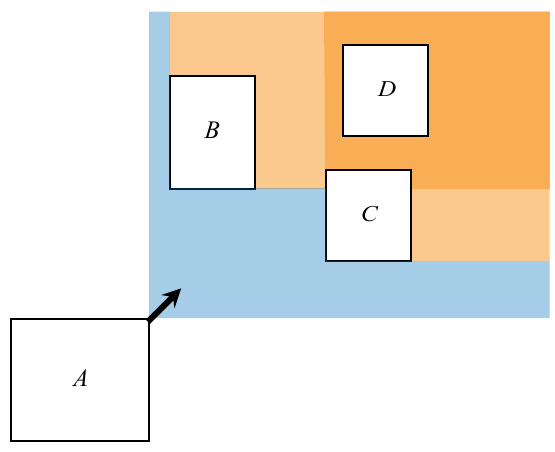}
        \caption{The shadows from rectangles $B$ and $C$ block $D$ from being visible from $A$.}
\label{fig:crvg-shadow} 
\end{figure}

Given a set of corner rectangles $R$, we construct a graph $G$ with a vertex for each rectangle in $R$ and an edge between vertices $a$ and $b$ if and only if for their corresponding rectangles $A$ and $B$ in $R,$ $A$ sees $B$ or $B$ sees $A.$ We say that $G$ is a \textbf{corner rectangle visibility graph (CRVG)} and $R$ is its \textbf{corner rectangle visibility representation (CRV representation).} We denote rectangles with uppercase letters and vertices with their corresponding lowercase letters.

\section{D-Monotone Sequences}

We are sometimes interested in what configurations of rectangles can be seen by a rectangle looking in a particular direction. This motivates the following definitions.

\begin{definition} 

A set of points $P_1=(x_1,y_1), \ldots, P_n=(x_n,y_n)$ is \textbf{monotonically increasing} if the function $f(x_i)=y_i$ is a strictly increasing function.
Similarly, a set of points is \textbf{monotonically decreasing} if $f(x_i)=y_i$ is a strictly decreasing function.

Let $\{R_i\}$ be a set of rectangles in the plane and let $D$ be one of the four directions (north, south, east, or west).
If $D$ is north or south and the set of points $\{R_{i_D}\}$ is monotonically increasing, or if $D$ is east or west and $\{R_{i_D}\}$ is monotonically decreasing, then we say that $\{R_i\}$ is \textbf{D-monotone.} In particular, if we know that $D$ is north, south, east, or west, we  say $\{R_i\}$ is \textbf{$N$-monotone}, \textbf{$S$-monotone}, \textbf{$E$-monotone}, or \textbf{$W$-monotone}, respectively.  Figure~\ref{fig:d monotone tree} shows an example of an $N$-monotone set of rectangles.  We may also say that $\{R_i\}$ is $-D$-monotone, which is monotone in the opposite direction of $D$. The difference between north/south and east/west is so that the neighborhood of a $D$-rectangle is $-D$-monotone, as described in Lemma \ref{monotone-out-nbhd}.
\end{definition}

\begin{proposition}
\label{trees}
Every tree has a $D$-monotone representation of $-D$-rectangles.
\end{proposition}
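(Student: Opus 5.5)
We argue by induction on the number of vertices, building the representation recursively from a rooted version of the tree. By the symmetry of the four directions (rotating the plane by multiples of $90^\circ$ permutes the labels $N,W,S,E$ and the notions of $D$-monotone and $-D$-monotone accordingly), it suffices to treat one direction $D$. Fix $D$, so every rectangle will be a $-D$-rectangle looking into the quarter-plane at its eye. The guiding observation, which is also the content of Lemma~\ref{monotone-out-nbhd} later in the paper, is this. The rectangles a corner rectangle can see without mutual blocking are exactly those whose nearest corners form a staircase in its quarter-plane. For a $-D$-rectangle, such a staircase of relevant corners is a $D$-monotone sequence.

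Root the tree $T$ at a vertex $r$ with children $c_1,\dots,c_k$ and subtrees $T_1,\dots,T_k$. The inductive hypothesis we would carry is a slightly stronger, ``clustered'' statement. Each $T_i$ has a $D$-monotone representation by $-D$-rectangles in which the rectangle $C_i$ for the root $c_i$ is the first rectangle of the sequence. Moreover, $C_i$ is extreme enough that every other rectangle of $T_i$ lies in a region shadowed by $C_i$ as seen from outside the cluster. We may also scale this representation to fit inside an arbitrarily small box.

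We then place $R$ for $r$ first. Next we place the $k$ clusters, shrunk, along a $D$-monotone staircase inside the sight quarter-plane of $R$'s eye, in order $T_1,\dots,T_k$, so the whole sequence of corners stays $D$-monotone. The root $R$ should see exactly $C_1,\dots,C_k$: each thin rectangle of influence from $R$'s eye to $C_i$ is empty, while every deeper vertex of $T_i$ is hidden behind $C_i$. We have freedom in choosing rectangle widths and heights, and not just corner positions, so we would make $C_i$ long in the appropriate direction so that its shadow covers the rest of its cluster.

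The main obstacle is ruling out unwanted visibilities. There are three kinds to check. First, rectangles in different clusters must not see each other. Second, a deep vertex must not see $R$ or some $C_j$ with $j \neq i$. Third, $R$ must not see a non-child. Since every eye looks in the same quarter-plane direction, we would control this by the ordering along the monotone staircase. A rectangle's sight quadrant then contains only later clusters, and we choose the gaps and the extents of the $C_j$ so that any rectangle of influence from a rectangle in cluster $i$ to one in cluster $j \neq i$ meets some intermediate rectangle, typically $C_j$ itself or one of the $C_l$ with $i<l<j$. We would first try making the clusters shrink geometrically and the child rectangles $C_j$ elongated, so that each $C_j$ acts as a wall. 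Verifying that these walls block every cross-cluster rectangle of influence, while leaving the rectangles of influence from $R$ to each $C_j$ open, is the delicate part of the argument and would be checked case by case on the relative positions of the corners.
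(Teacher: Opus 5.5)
There is a genuine gap: your scheme has each parent see its children, and that cannot happen inside a $D$-monotone set of $-D$-rectangles. Take $D$ to be north, so every rectangle is a south rectangle and $R$ looks from its lower-right corner $R_S$ into the quadrant below and to the right of $R_S$. If the north corner of a child $C_i$ lies in that quadrant, then $(C_i)_N(x)\geq R_S(x)>R_N(x)$ and $(C_i)_N(y)\leq R_S(y)<R_N(y)$. So $R$ and $C_i$ alone already violate $N$-monotonicity, and your sentence ``so the whole sequence of corners stays $D$-monotone'' is false. The picture behind ``a rectangle's sight quadrant then contains only later clusters'' is wrong too: in a $D$-monotone set, the sight quadrant of a $-D$-rectangle contains no $D$-corner of any other member.

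Letting $R$ see its children only along their sides does not rescue the plan. Every out-neighbor is then side-visible, so by Lemma~\ref{side-viz-lemma} (this is Lemma~\ref{sight in a north-monotonic sequence}) a rectangle in a $D$-monotone set of $-D$-rectangles sees at most two others. Consider $K_{1,4}$: the center has at least three children under any choice of root. No tuning of widths, shrinking, or ``walls'' can make a parent see all of them.

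The missing idea is to reverse the direction of visibility. Each non-root rectangle should see exactly one rectangle, its parent, and see it side-on, which fits within the bound of two. This is the paper's construction. With $D$ north, put the root $A$ at the bottom and make it extend far to the right. Place its children above it along an $N$-monotone staircase, each extending further right than the one below it but not as far as $A$. Then each child's eye looks down onto the top side of $A$, while all earlier siblings and their subtrees lie to the left of that eye. Recurse by placing each child's subtree between that child and the next sibling, kept to the left of that child's right edge. Then every rectangle's parent spans the vertical line through its eye, which blocks it from seeing anything lower than that parent. Your clustered induction hypothesis survives in this form: the cluster root comes first in the order and extends farthest in the sight direction. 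But the edges must be realized from child to parent, not from parent to child.
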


\begin{proof}
The representation can be built recursively. Without loss of generality, let $D$ be north. Pick a root for the tree, $a.$ Place the rectangle $A$ at the bottom of the construction. Place its children $B,$ $C,$ $D,...$ above it in an $N-$monotone manner, where each rectangle extends to the right of the one below it, but never as far as the root, as shown in Figure~\ref{fig:d monotone tree}. Then all the children of $A$ can see $A.$ Now repeat this process recursively with the trees formed by taking each of the children of $a$ as the root of a new tree. 
\end{proof}

\begin{figure}
        \centering
        \includegraphics[height=6cm]{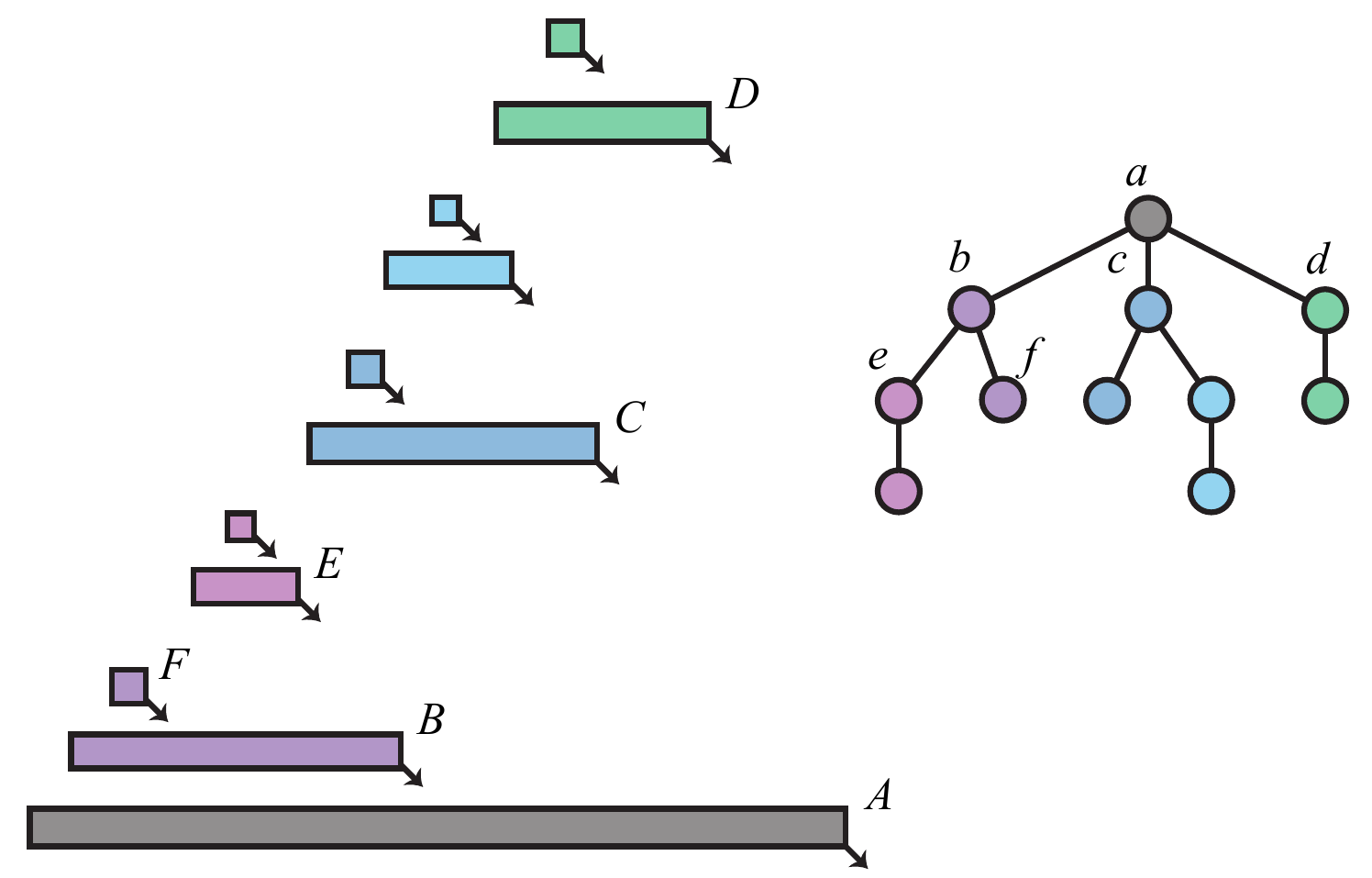}
        \caption{An $N-$monotone set of south rectangles in a representation of a tree.}
\label{fig:d monotone tree} 
\end{figure}

Going forward, it will sometimes be useful to consider the directed graph of a representation, where there is an edge from $a$ to $b$ exactly when the rectangle $A$ sees the rectangle $B.$ 

Recall that in a directed graph, the out-neighborhood $N_+(a)$ of a vertex $a$ is the set of vertices with a directed edge from $a$.  In a directed CRVG, the out-neighborhood $N_+(A)$ of a rectangle $A$ is the set of rectangles which can be seen by $A.$  The neighborhood $N(a)$ of a vertex $a$ in a directed or undirected graph is the set of vertices adjacent to $a$.

\begin{lemma} \label{monotone-out-nbhd}
The out-neighborhood of a $D$-rectangle $A$ in a CRV representation is a $-D$-monotone set of rectangles.
\end{lemma}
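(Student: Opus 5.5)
By the rotational symmetry of the definitions, I will assume $D$ is north; the other three directions follow by rotating the plane by multiples of $90^\circ$. The convention that $N$/$S$-monotone means increasing while $E$/$W$-monotone means decreasing is exactly what makes this rotation work, and I will check that it carries the statement for $N$ to the statements for the other directions. So let $A$ be a north rectangle, with eye $A_N$ at its upper left corner. A rectangle of influence from $A_N$ that meets $A$ only in its eye must lie in the closed quarter-plane $Q=\{(x,y): x\le A_N(x),\ y\ge A_N(y)\}$. The goal is to show that the south corners (lower right corners) of the rectangles that $A$ sees form a monotonically increasing set.

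Let $B$ and $C$ be two distinct rectangles seen by $A$. I will show that $B_S(x)\neq C_S(x)$, and that if $B_S(x)<C_S(x)$ then $B_S(y)<C_S(y)$. I argue by contradiction: suppose $B_S(x)\le C_S(x)$ and $B_S(y)\ge C_S(y)$. Let $R_{AB}$ be a witnessing closed rectangle of influence, with opposite corners $A_N$ and a boundary point $q$ of $B$ in $Q$. Because $B$ lies weakly up-left of its own south corner, $q(x)\le B_S(x)$ and $q(y)\ge B_S(y)$. Hence $R_{AB}$ contains the closed box
\[
K=[B_S(x),A_N(x)]\times[A_N(y),B_S(y)],
\]
where $B_S(x)$ is replaced by $q(x)$ if $B_S(x)>A_N(x)$.

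Next I use that $A$ also sees $C$. Then $C$ meets $Q$, so $C_S(y)\ge A_N(y)$ and $C$ contains the point $p=(\min(C_S(x),A_N(x)),\,C_S(y))$. Under the assumed inequalities, $p$ lies in $K$, so $C$ meets $R_{AB}$, contradicting the requirement that $R_{AB}$ intersects no rectangle other than $A$ and $B$.

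The main obstacle is the boundary and degenerate bookkeeping in this last step. I must handle the case where $C_S(x)$ or $B_S(x)$ exceeds $A_N(x)$, so that the rectangle sticks out to the right of the eye. I must also handle the equality cases $B_S(x)=C_S(x)$ and $B_S(y)=C_S(y)$, which are needed for strictness. Closedness of the rectangle of influence is what makes the equality cases still produce an intersection. I will also use non-degeneracy and the fact that rectangles are pairwise disjoint to rule out $p$ being a point of $B$ itself. Since any two rectangles in $N_+(A)$ are ordered consistently in both coordinates of their south corners, the set is $S$-monotone, i.e.\ $-D$-monotone.
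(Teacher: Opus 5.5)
Your strategy is the same as the paper's: take two rectangles seen by $A$ that are in the wrong order, and show that the witnessing rectangle of influence for one of them must meet the other. The coordinate execution, however, contains a false step, and it is not an edge case.

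\textbf{The false step.} The claim ``$C$ meets $Q$, so $C_S(y)\ge A_N(y)$'' does not follow. Meeting $Q$ only tells you that the \emph{top} edge of $C$ is at height at least $A_N(y)$ and that its left edge is at most $A_N(x)$. The bottom edge of $C$ can lie below the eye. For example, take $A=[0,1]\times[-1,0]$, with eye at the origin, and $C=[-2,-1]\times[-5,5]$. Then $A$ sees the SE side of $C$ through $[-1,0]\times[0,1]$, yet $C_S=(-1,-5)$.

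This is exactly the side-visible situation the paper deals with, so it cannot be set aside. In that case your point $p=(\min(C_S(x),A_N(x)),\,C_S(y))$ lies below the eye, hence outside $Q\supseteq R_{AB}$, and the contradiction is not produced. Likewise, your box $K$ is empty whenever $B_S(y)<A_N(y)$. You corrected for rectangles that overhang to the right of the eye, but not for the symmetric overhang below it.

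\textbf{The repair.} Clamp both coordinates: set $p=(\min(C_S(x),A_N(x)),\,\max(C_S(y),A_N(y)))$.
\begin{itemize}
\item $p\in C$: since $C\cap Q\neq\emptyset$, the left edge of $C$ is at most $A_N(x)$ and its top edge is at least $A_N(y)$.
\item $p\in R_{AB}$: here $R_{AB}=[q(x),A_N(x)]\times[A_N(y),q(y)]$ with $q\in Q$, $q(x)\le B_S(x)\le C_S(x)$ and $q(y)\ge B_S(y)\ge C_S(y)$.
\end{itemize}
Since $C\neq A,B$, this directly contradicts the definition of seeing. The box $K$ then becomes unnecessary, and the equality cases come for free from closedness. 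The appeal to non-degeneracy and disjointness to keep $p$ out of $B$ is also not needed.

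With this fix, your argument is a rigorous version of the paper's figure-based proof.
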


\begin{proof}

    Without loss of generality, let $A$ be a south rectangle. Let $B_1,$ $B_2,$~$\ldots,$~$B_n$ be the elements of $N_+(A)$ ordered by increasing $x$-coordinate of their north corners. Note that to be a $N$-monotone sequence, the north corners of $B_1,$~$\ldots,$~$B_n$ must have increasing $y$-coordinate as well.

    Suppose by way of contradiction that $B_1,\ldots, B_n$ is not $N$-monotone, that is, their $y$-coordinates are not increasing. Then there are some rectangles $B_i,$ and $B_k$ in $N_+(A)$ with $i<k$ such that $B_{i_N}(y) \geq B_{k_N}(y).$  Possible arrangements of the three rectangles are shown in Figure~\ref{monotone_figure}. Then any rectangle of influence from the south corner of $A$ to $B_k$ must intersect $B_i,$ which contradicts the fact that $A$ can see $B_k.$
\end{proof}

\begin{figure}    
    \centering
    \includegraphics [width=7cm]{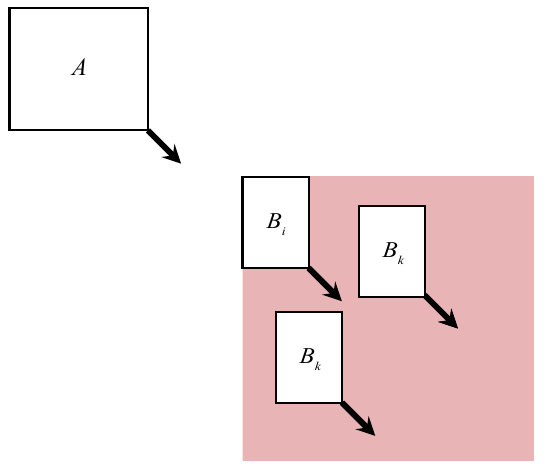}
    \caption{In Lemma~\ref{monotone-out-nbhd}, the south rectangle $A$ cannot see $B_k$ if the north corner of $B_k$ is anywhere in the shaded area.} \label{monotone_figure}
\end{figure}

\begin{definition}
For a $D$-rectangle $A$, $B$ is \textbf{side-visible} to $A$ if $A$ sees $B$ but $A$ doesn't see the $-D$ corner of $B$. An example of a south rectangle $A$ and a rectangle $B$ which is side-visible from $A$ is shown in Figure \ref{sidevis2}.
\end{definition}

\begin{figure}
    \centering
    \includegraphics [width=8cm]{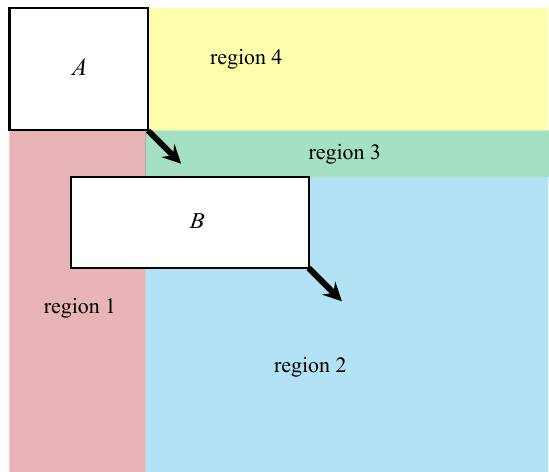}
    \caption{A rectangle $B$ side-visible from $A$, and the possible regions a second rectangle visible from $A$ could be in, as in Lemma~\ref{side-viz-lemma}.}
    \label{sidevis2}
\end{figure}

\begin{lemma}
    \label{side-viz-lemma}
    For a rectangle $A$, there are at most two rectangles that are side-visible from $A$.
\end{lemma}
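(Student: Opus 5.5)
Without loss of generality we take $A$ to be a south rectangle, so its eye is $A_S$ and it looks into the quarter-plane below and to the right of $A_S$. By Lemma~\ref{monotone-out-nbhd}, $N_+(A)$ is $N$-monotone. List it as $B_1,\ldots,B_n$ by increasing $x$-coordinate of north corners; then the $y$-coordinates of the north corners also increase. The plan is to show that only $B_1$ and $B_n$ can be side-visible.

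The first step is to classify how a rectangle $B$ seen by $A$ can fail to have its north corner seen. Since $A$ sees $B$, some closed rectangle of influence with corner $A_S$ meets $B$ in its boundary and meets no other rectangle. Its far corner can be slid along the boundary of $B$ toward $A_S$ without leaving the boundary. If $B_N$ lies in the quarter-plane and is unobstructed, the corner can be moved to $B_N$, and $B_N$ is seen. So side-visibility means one of two things. Either $B_N$ lies outside the open quarter-plane (to the left of $A_S$, or above it), and $A$ sees $B$ only through its top side or its left side. Or $B_N$ lies in the quarter-plane but is hidden by another rectangle. I would argue that the second case cannot occur. If some rectangle $C$ meets the rectangle of influence from $A_S$ to $B_N$, then the monotone structure, together with the fact that some rectangle of influence to $B$ is empty, should force $C$ to meet every rectangle of influence reaching $B$. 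So, up to the non-degeneracy conventions, $B$ is side-visible exactly when $B_N(x)\le A_S(x)$ (seen through its top side) or $B_N(y)\ge A_S(y)$ (seen through its left side).

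The second step uses monotonicity to bound the number of such rectangles. Suppose $B_i$ and $B_j$, with $i<j$, both satisfy $B_N(x)\le A_S(x)$. Both are seen through their top sides, and their tops lie below $A_S$. By $N$-monotonicity $B_{i_N}(y)<B_{j_N}(y)$, so $B_i$ lies lower. Then every rectangle of influence from $A_S$ reaching the top of $B_i$ must cross the vertical line $x=A_S(x)$ down to $B_i$'s top, and it has to pass through $B_j$, which straddles that line above $B_i$. This contradicts visibility of $B_i$. The symmetric argument, with roles of $x$ and $y$ exchanged, shows that at most one rectangle is seen through its left side with $B_N(y)\ge A_S(y)$. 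Hence there is at most one rectangle of each type. These would be $B_1$ (top-side type) and $B_n$ (left-side type), matching the two regions in Figure~\ref{sidevis2}.

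The main obstacle is the first step: excluding a rectangle whose north corner lies strictly inside the quarter-plane yet is blocked, while some other part of its boundary is still visible. To handle it, I would take a blocker $C$ of the rectangle of influence to $B_N$. If $C$ lies to the left of the visible witness rectangle, its shadow covers everything below it; if it lies above, its shadow covers everything to its right. In either case this covers all of $B$'s boundary that is closer to $A_S$ than the visible point, which gives the contradiction. Degenerate boundary contacts need care, using the non-degeneracy requirement in the definition of seeing.
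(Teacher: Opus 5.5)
Your proposal is correct and follows essentially the paper's route: both split the side-visible rectangles into those seen through the top (NE) side and those seen through the left (NW) side, and show that two of the same kind would block each other, with your coordinate criterion ($B_N(x)\le A_S(x)$ or $B_N(y)\ge A_S(y)$) and the ordering from Lemma~\ref{monotone-out-nbhd} making the paper's region picture precise. The step you flag as the main obstacle is immediate and needs no shadow case analysis: if $B_N$ lies strictly inside the quarter-plane, any witness rectangle meeting $B$ only in its boundary must end on the top or left side of $B$, so it contains the rectangle of influence from $A_S$ to $B_N$, and that rectangle therefore meets no other rectangle.
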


\begin{proof}
Without loss of generality, let $A$ be a south rectangle.  We claim that there is only one rectangle $B$ side-visible from $A$, where $A$ sees the NE side of $B$. See Figure~\ref{sidevis2} for the placement of $B$ and consider the possible locations for the north corner of a rectangle $C.$ We show that there is nowhere to place the north corner of $C$ that results in both $B$ and $C$ being side-visible from $A$ to their northeast sides.

\begin{enumerate}
    \item If the north corner of $C$ is in region 1 then $A$ cannot see $C,$ unless $C$ extends into region 3, in which case it blocks sight from $A$ to $B.$
    \item If the north corner of $C$ is in region 2 then $A$ cannot see $C$.
    \item If the north corner of $C$ is in region 3 then $C$ is visible but not side-visible from $A.$
    \item If the north corner of $C$ is in region 4 then $A$ cannot see $C$ unless $C$ extends into region 3, in which case $C$ is side-visible from $A$ but to its NW side.
\end{enumerate}

Thus $A$ can have at most one rectangle which is side-visible from $A$ to its NE side and at most one rectangle side-visible from $A$ to its NW side.
\end{proof}

\begin{lemma}
\label{sight in a north-monotonic sequence}
A $D$-directional rectangle $A$ in a $-D$-monotone set of rectangles $X$ can see at most two other rectangles in $X$.
\end{lemma}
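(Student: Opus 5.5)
Without loss of generality take $D$ to be south, so $A$ is a south rectangle in an $N$-monotone set $X$. By definition, the north corners of the rectangles in $X$ are strictly increasing: ordering $X$ by the $x$-coordinate of north corners, the $y$-coordinates also increase. The south rectangle $A$ looks into the quarter-plane to the southeast of $A_S$, i.e.\ the region of points with $x \geq A_S(x)$ and $y \leq A_S(y)$. The plan is to show that every rectangle of $X$ that $A$ sees is side-visible from $A$, and then apply Lemma~\ref{side-viz-lemma}.

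First, fix $B \in X$ with $B \neq A$ and suppose $A$ sees $B$. I claim that $A$ cannot see the north corner $B_N$, which by definition makes $B$ side-visible from $A$. If $A$ saw $B_N$, then $B_N$ would lie weakly southeast of $A_S$: $B_N(x) \geq A_S(x) > A_N(x)$ and $B_N(y) \leq A_S(y) < A_N(y)$. So $B_N$ lies strictly to the right of and strictly below $A_N$, which contradicts $N$-monotonicity of $X$. The degenerate cases where coordinates coincide are excluded by strictness of monotonicity and by non-degeneracy of rectangles, but I would check them explicitly.

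Hence every rectangle of $X$ seen by $A$ is side-visible from $A$, and Lemma~\ref{side-viz-lemma} gives at most two of them. The lemma's proof further shows at most one is seen on its NE side and at most one on its NW side. For general $D$ the argument is identical after rotation. The sign convention in the definition of $D$-monotone (increasing for N/S, decreasing for E/W) is exactly what makes the $-D$ corner of any other member lie outside the sight quadrant of a $D$-rectangle.

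The main obstacle is the first step. One must confirm that "sees the $-D$ corner" means the $-D$ corner lies in the closed sight quadrant. One must also confirm that monotonicity of $X$ forbids any other member's $-D$ corner from lying there, including boundary cases where $B_N$ lies on the horizontal line through $A_S$ or the vertical line through $A_S$. I would handle these directly with the strict inequalities from $A$ having positive width and height.
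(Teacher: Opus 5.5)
Your proposal is correct and follows the paper's proof. You reduce to $D$ south, observe that $N$-monotonicity keeps every other member's north corner out of $A$'s closed southeast sight quadrant, so every rectangle of $X$ that $A$ sees is side-visible, and then invoke Lemma~\ref{side-viz-lemma}; your inequality chain $B_N(x)\ge A_S(x)>A_N(x)$, $B_N(y)\le A_S(y)<A_N(y)$ spells out the paper's one-line claim and already settles the boundary cases you flagged.
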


\begin{proof}
Without loss of generality, let $X$ be an $N$-monotone sequence of rectangles and $A$ be a south rectangle in $X.$ By definition of $N$-monotone, the north corners of all rectangles in $X$ are to the left or above the south corner of $A$. Thus, any rectangles in $X$ seen by $A$ must be side-visible from $A.$ By Lemma \ref{side-viz-lemma}, there are at most two rectangles side-visible from $A.$ Thus $A$ can see at most two other rectangles in $X.$
\end{proof}

\section{South Corner Rectangle Visibility Graphs}
\label{sec:SCRVGs}

In this section we consider graphs with CRV representations containing only south rectangles. We call these graphs \textbf{south corner rectangle visbility graphs} or \textbf{SCRVGs}. Of course, the results that follow also hold for any CRVGs whose rectangles all look in the same direction. 

 \begin{lemma} \label{lem:acyclic}
 A directed SCRVG is acyclic.
 \end{lemma}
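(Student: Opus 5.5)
The plan is to find a real-valued potential on rectangles that strictly decreases along every directed edge. Acyclicity then follows immediately, since a directed cycle would force the potential to be strictly less than itself.

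All rectangles are south rectangles, so each eye is the lower right corner $A_S$. The first step is to recall what it means for $A$ to see $B$. There is a non-degenerate closed rectangle of influence $Q$ that meets $A$ exactly in $A_S$ and meets $B$ in its boundary. Because $Q$ touches $A$ only at that corner and is non-degenerate, $Q$ must lie in the closed quarter-plane $\{(x,y): x \ge A_S(x),\ y \le A_S(y)\}$. Any other placement of $Q$ at the corner $A_S$ would overlap $A$ in more than a point.

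The second step is the key observation that $B_N$, the upper left corner of $B$, satisfies $B_N(x) \ge A_S(x)$ and $B_N(y) \le A_S(y)$, with at least one of these inequalities strict. I would argue this as follows.

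\begin{enumerate}
\item The point $q$ where $Q$ meets $B$ lies in the quarter-plane and in $B$.
\item Since $B$ does not intersect $A$, the rectangle $B$ cannot contain points to the left of $A_S(x)$ and also points above $A_S(y)$ while overlapping $A$ near the corner. More concretely, suppose $B_N(x) < A_S(x)$. Then $B$ spans horizontally across the vertical line $x = A_S(x)$, since it contains $q$ with $q(x) \ge A_S(x)$. Because $B$ is disjoint from $A$, its top must then lie strictly below the bottom of $A$, so $B_N(y) < A_S(y)$.
\item Symmetrically, if $B_N(y) > A_S(y)$, then $B_N(x) > A_S(x)$.
\end{enumerate}

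The cleanest way to combine these is the potential $\phi(A) = A_S(y) - A_S(x)$, compared with $\psi(B) = B_N(y) - B_N(x)$. The goal is to show $\phi(B) < \phi(A)$. We have $B_S(y) \le B_N(y)$ and $B_S(x) \ge B_N(x)$, with at least one of these strict since $B$ is non-degenerate. Hence $\phi(B) = B_S(y) - B_S(x) < B_N(y) - B_N(x) = \psi(B)$. It therefore suffices to prove $\psi(B) \le \phi(A)$.

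This last inequality needs care in the mixed cases. The sign analysis above does not by itself give $B_N(y) - B_N(x) \le A_S(y) - A_S(x)$ when, say, $B_N(x) < A_S(x)$. So I would instead use a cleaner dominance argument. Here $B$ contains the point $q$, which satisfies $q(x) \ge A_S(x)$ and $q(y) \le A_S(y)$. The lower right corner $B_S$ satisfies $B_S(x) \ge q(x)$ and $B_S(y) \le q(y)$. Therefore $B_S(x) \ge A_S(x)$ and $B_S(y) \le A_S(y)$, and equality in both would force $B_S = A_S$, contradicting the disjointness of $A$ and $B$. This shows that $\phi(B) = B_S(y) - B_S(x) < A_S(y) - A_S(x) = \phi(A)$, with no need for the $B_N$ detour.

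So $\phi$ strictly decreases along every directed edge, and no directed cycle can exist. The main subtlety is justifying that $Q$ lies in the southeast quarter-plane at $A_S$. This follows because $Q$ is an axis-parallel rectangle with a corner at $A_S$ and is non-degenerate. Of the four quadrants at $A_S$, only the southeast one meets $A$ in just the point $A_S$. The other three contain either interior points of $A$ or a segment of its boundary.
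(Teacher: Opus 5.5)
Your final argument is correct and is essentially the paper's: the paper asserts that if $A$ sees $B$ then $A_S(x)<B_S(x)$ and $A_S(y)>B_S(y)$, and chains this along a directed path. You instead show that $B_S$ weakly dominates $A_S$ to the southeast with $B_S\neq A_S$, justifying it more carefully than the paper via $Q$ lying in the southeast quadrant, and you package the transitivity as the strictly decreasing potential $A_S(y)-A_S(x)$. Delete the $B_N$/$\psi$ detour, though: its ``key observation'' that $B_N(x)\ge A_S(x)$ and $B_N(y)\le A_S(y)$ is false in general (e.g.\ $B$ below $A$ and extending to the left of $A_S$, the very situation of your own item~2), and none of it is needed.
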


\begin{proof}
Suppose $G$ is a directed SCRVG represented by a set of south rectangles, and suppose $a \to b$ is a directed edge from $a$ to $b$ in $G$. Then the rectangle $A$ sees $B$ from its south corner, so $A_S(x)<B_S(x)$ and $A_S(y)>B_S(y)$, so $B$ doesn't see $A$.  Similarly, if $a_1 \to a_2 \to \cdots \to a_k$ is a directed path in $G$, then $A_k$ doesn't see $A_1$. 
\end{proof}

\begin{proposition}
\label{SCRVG complete graphs}
The complete graph $K_n$ for $n \leq 4$ is an SCRVG. For $n \geq 5,$ $K_n$ is not an $SCRVG.$
\end{proposition}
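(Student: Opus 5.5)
Both claims follow quickly from the acyclicity of Lemma~\ref{lem:acyclic} together with Lemma~\ref{sight in a north-monotonic sequence}.

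\textbf{Non-representability for $n\ge 5$.} It suffices to show $K_5$ is not an SCRVG. Every SCRVG is hereditary under deleting rectangles only up to blocking, so instead we argue directly for $K_n$ with $n\ge 5$. Suppose $R$ is a representation of $K_n$ by south rectangles and orient each edge $a\to b$ when $A$ sees $B$. By the proof of Lemma~\ref{lem:acyclic}, $A$ sees $B$ forces $A_S(x)<B_S(x)$ and $A_S(y)>B_S(y)$. Since every pair is adjacent, every pair of south corners is comparable in this order. So, listing the rectangles $A_1,\dots,A_n$ by increasing $A_S(x)$, the $y$-coordinates of the south corners strictly decrease. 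Thus the whole tournament is transitive and $A_1$ must see all of $A_2,\dots,A_n$.

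By Lemma~\ref{monotone-out-nbhd}, $N_+(A_1)=\{A_2,\dots,A_n\}$ is an $N$-monotone set. Now take $A_2$: it is a south rectangle lying in this $N$-monotone set and must see $A_3,\dots,A_n$. By Lemma~\ref{sight in a north-monotonic sequence}, $A_2$ sees at most two other members of the set. Hence $n-2\le 2$, that is, $n\le 4$. This contradiction proves the second claim, with essentially no case analysis. The one subtle point is to check that Lemma~\ref{sight in a north-monotonic sequence} applies to $A_2$ as a member of $X=N_+(A_1)$. The lemma is stated exactly for a $D$-rectangle in a $-D$-monotone set, so it does.

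\textbf{Constructions for $n\le 4$.} It suffices to give $K_4$ explicitly, since deleting a rectangle and then adjusting can give $K_3$, $K_2$, $K_1$. Alternatively, build each case directly as smaller instances of the same staircase. The plan is to place $A_1$ at the upper left, then $A_2,A_3,A_4$ in an $N$-monotone staircase down and to the right, arranged so that:
\begin{itemize}
\item $A_1$ sees the north corners of all three;
\item $A_2$ sees $A_3$ side-visibly along its NE side and $A_4$ along its NW side, or via corners;
\item $A_3$ sees $A_4$.
\end{itemize}
Concretely, I would take thin tall and wide rectangles, with $A_4$ long horizontally so that $A_2$'s quarter-plane reaches it past $A_3$. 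The main obstacle is simply making sure no rectangle of influence is blocked, which I expect to verify by drawing a figure with explicit coordinates.
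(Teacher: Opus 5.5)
Your argument for $n\ge 5$ is correct, and it takes a different route from the paper. The paper gets this half in one line as a corollary of Theorem~\ref{SCRVG edge bound}, since $\binom{n}{2}>\left[\frac{n^2}{4}\right]+n-2$ for $n\ge 5$.

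You argue directly instead. Completeness, together with the inequalities $A_S(x)<B_S(x)$ and $A_S(y)>B_S(y)$ from the proof of Lemma~\ref{lem:acyclic}, forces the orientation to be the transitive tournament in the order of the south corners. So $N_+(A_1)=\{A_2,\dots,A_n\}$, this set is $N$-monotone by Lemma~\ref{monotone-out-nbhd}, and $A_2$ would have to see $n-2>2$ of its members. That contradicts Lemma~\ref{sight in a north-monotonic sequence}.

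This is the inductive step of the edge-bound proof specialized to cliques. Your $A_1$ and $A_2$ play the roles of $A$ and of the rectangle $B_k\in N(A)$ that no other member of $N(A)$ sees. In this specialization you need no induction, no base cases and no rounding arithmetic. You also get structure for free: an SCRV representation of a clique is totally ordered by its south corners, which dictates what a $K_4$ must look like. The paper's route costs nothing once the theorem is available, and the theorem bounds every SCRVG, not only cliques.

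The $K_4$ half, however, is still only a plan, and as written it does not quite hang together.
\begin{enumerate}
\item In an $N$-monotone set the north corners rise to the right. It is the south corners of $A_2,A_3,A_4$ that descend, and the two orders cannot coincide.
\item The proof of Lemma~\ref{sight in a north-monotonic sequence} shows $A_2$ sees no north corner in $N_+(A_1)$, so ``or via corners'' is impossible. $A_2$ must see one of $A_3,A_4$ on that rectangle's own NE (top) side and the other on its own NW (left) side. The top-seen one then precedes $A_2$ in the north-corner order, which is why the two orders differ.
\item Your bullet point and your remark that $A_4$ is long horizontally assign these two sides inconsistently. Either consistent assignment can be realized.
\end{enumerate}

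For instance, take the closed rectangles $A_1=[-1,0]\times[8,9]$, $A_2=[2,4]\times[4,6]$, $A_3=[1,5]\times[2,3]$ and $A_4=[6,7]\times[1,7]$, with south corners $(0,8)$, $(4,4)$, $(5,2)$, $(7,1)$. The rectangles of influence $[0,2]\times[6,8]$, $[0,1]\times[3,8]$ and $[0,6]\times[7,8]$ witness $A_1$ seeing $A_2,A_3,A_4$. The rectangles $[4,4.5]\times[3,4]$ and $[4,6]\times[3.5,4]$ witness $A_2$ seeing $A_3$ and $A_4$, and $[5,6]\times[1.5,2]$ witnesses $A_3$ seeing $A_4$. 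Each witness meets only its two rectangles, touching the viewer exactly at its eye and the target only on its boundary.

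Finally, no ``adjusting'' is needed for $K_3,K_2,K_1$. Deleting rectangles can never destroy a sight line, so deleting rectangles from a representation of $K_4$ directly represents a smaller clique.
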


\begin{proof}
    An SCRV representation of $K_4$ is shown in Figure~\ref{SCRVG K4}. Smaller complete graphs can be formed by removing some of the rectangles from Figure~\ref{SCRVG K4}. The fact that $K_n$ is not an SCRVG for $n \geq 5$ follows from Theorem \ref{SCRVG edge bound}, since $K_n$ has $\frac{n^2}{2} - \frac{n}{2}$ edges, which is more than $\left[ \frac{n^2}{4} \right]+n-2$ edges when $n \geq 5$.
\end{proof}

\begin{figure}
    \centering  \includegraphics[width=0.4\linewidth]{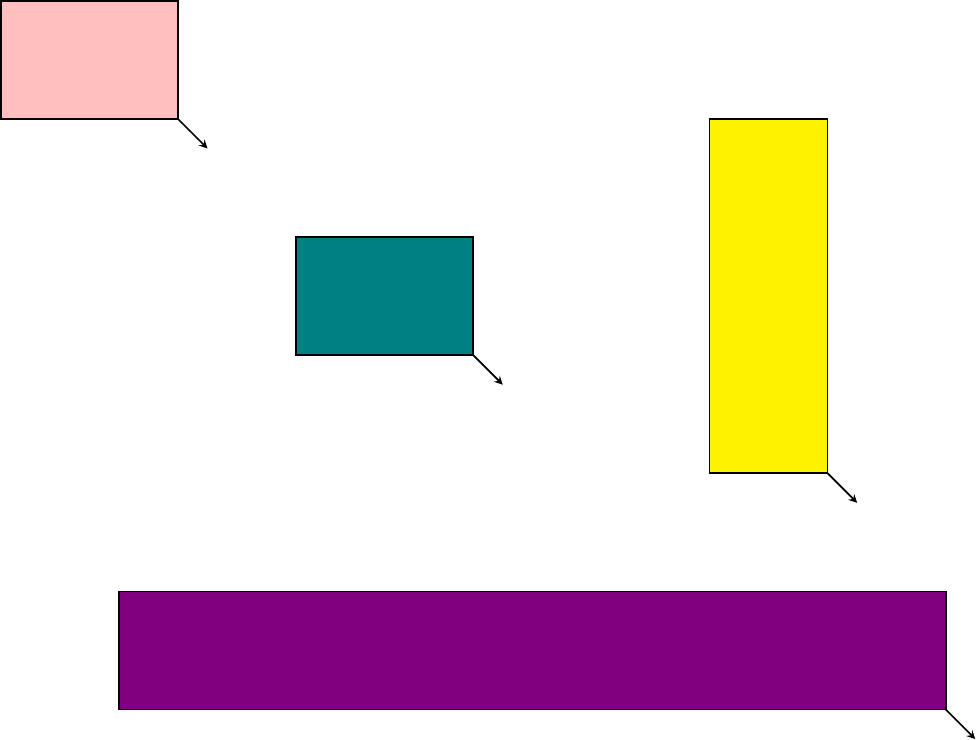}
    \caption{An SCRV representation of $K_4.$}  \label{SCRVG K4}
\end{figure}

\begin{proposition}
\label{complete_bipartite}
    The complete bipartite graph $K_{m,n}$ is an SCRVG for all integers $m,n \geq 1.$
\end{proposition}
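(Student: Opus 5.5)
We give an explicit representation in which every rectangle is a south rectangle. Recall that the south corner is the lower right corner, and that by the proof of Lemma~\ref{lem:acyclic} a south rectangle sees only into the closed quarter-plane to the southeast of its eye. The plan is to place the $m$ rectangles $A_1,\dots,A_m$ of one part as a tiny cluster near the origin, arranged so that no $A_i$ lies southeast of another. We then place the $n$ rectangles $B_1,\dots,B_n$ of the other part far to the southeast, as an $N$-monotone staircase. This is the configuration Lemma~\ref{monotone-out-nbhd} says an out-neighborhood must have, so every $A_i$ should see every $B_j$.

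Concretely, fix a small $\varepsilon>0$ and let $A_i=[3i\varepsilon,3i\varepsilon+\varepsilon]\times[3i\varepsilon,3i\varepsilon+\varepsilon]$, so the eye of $A_i$ is at $(3i\varepsilon+\varepsilon,3i\varepsilon)$. For $1\le j\le n$ let
\[
B_j=[10+2j,\,11+2j]\times[-100+j\delta,\,-10+j\delta]
\]
for a small $\delta>0$. Both the north corners $(10+2j,-10+j\delta)$ and the south corners $(11+2j,-100+j\delta)$ of the $B_j$ then strictly increase in both coordinates.

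The verification will proceed in four steps.
\begin{enumerate}
\item No $A_i$ sees $A_{i'}$. For $i'>i$ the rectangle $A_{i'}$ lies entirely above the eye of $A_i$. For $i'<i$ it lies entirely to the left of that eye. So neither meets the southeast quarter-plane at the eye.
\item No $B_j$ sees $B_k$. For $k>j$ the bottom of $B_k$ is strictly above the eye of $B_j$. For $k<j$ the rectangle $B_k$ lies strictly to the left. Since the rectangle of influence must be non-degenerate and closed, no sight is possible.
\item No $B_j$ sees any $A_i$, because all the $A_i$ lie to the northwest of every $B_j$.
\item Each $A_i$ sees each $B_j$. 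Take the rectangle of influence with corners at the eye of $A_i$ and at a point $(10+2j+\eta,\,-10+j\delta)$ on the top side of $B_j$, for small $\eta>0$. We must check that it meets no other rectangle.
\begin{itemize}
\item It avoids the other $A$'s by the same argument as in step 1.
\item It avoids $B_k$ for $k>j$, since those lie to its right.
\item It avoids $B_k$ for $k<j$, since their tops at height $-10+k\delta$ lie strictly below its bottom edge at height $-10+j\delta$.
\end{itemize}
It meets $A_i$ only in its eye and meets $B_j$ only on its boundary, as required.
\end{enumerate}

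The resulting graph therefore has edges exactly $a_ib_j$, which gives $K_{m,n}$. The only delicate point is step 4, the blocking check: we must make sure the earlier $B_k$ do not obstruct the view of $B_j$. This is precisely why the tops of the $B_j$ are chosen to increase strictly with $j$, while the bottoms also increase so that the $B$'s do not see each other. Everything else is a direct coordinate comparison.
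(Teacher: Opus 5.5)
Your construction is correct and is essentially the paper's: both use an $N$-monotone family of south rectangles for one part, placed entirely northwest of an $N$-monotone family of south rectangles for the other part. The only differences are that you give explicit coordinates (with tall rectangles for the second part, where the paper uses equal squares) and you carry out the sight and blocking checks that the paper leaves to its figure.
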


\begin{proof}
    Let the independent sets of $K_{m,n}$ be $S_1$ and $S_2$, with $|S_1|=m$ and $|S_2|=n$. Using squares of equal size, construct an $N-$monotone set of $m$ south rectangles representing $S_1$, and an $N-$monotone set of $n$ south rectangles strictly below and to the right of the other set representing $S_2$, as shown in Figure~\ref{fig:SCRVG bipartite}. Then all the rectangles in $S_1$ can see all the rectangles in $S_2$ and none of the rectangles in $S_i$ can see any of the others in $S_i.$
\end{proof}

\begin{figure}
    \centering
    \includegraphics[width=5cm]{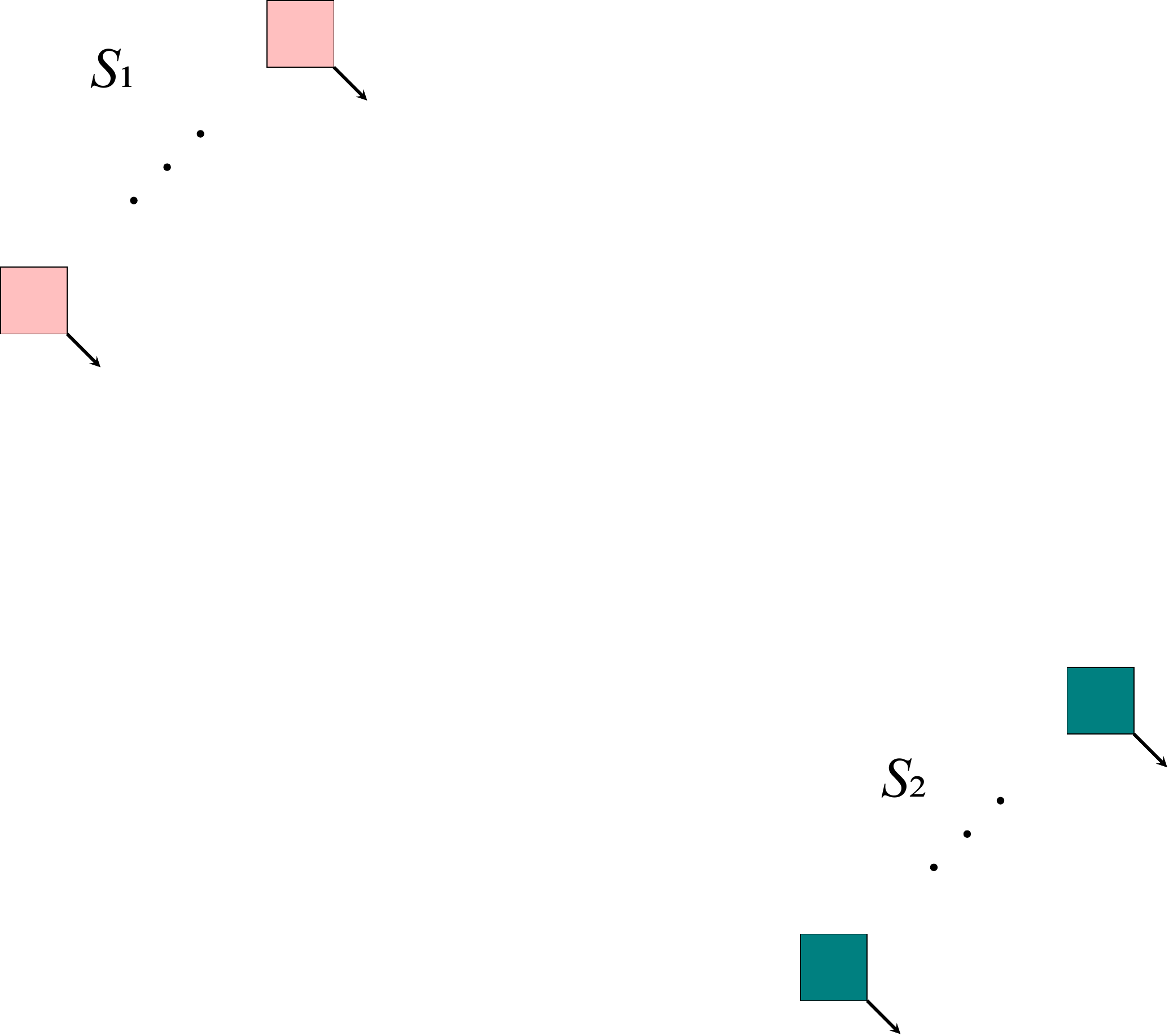}
    \caption{An SCRV representation of a complete bipartite graph.}
    \label{fig:SCRVG bipartite}
\end{figure}

We define the graph $K_{m,n}+P_n$ to be the complete bipartite graph $K_{m,n}$ with independent sets $S_1$ and $S_2$ of sizes $m$ and $n$, union with a path on the vertices of $S_2$.

\begin{proposition}
$K_{m,n}+P_n$ is an SCRVG for all integers $m,n \geq 1.$
\end{proposition}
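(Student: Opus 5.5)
The plan is to modify the construction in the proof of Proposition~\ref{complete_bipartite}. I keep the block of $m$ squares for $S_1$ essentially unchanged. I replace the $n$ equal squares for $S_2$ by a staircase of tall, thin south rectangles $B_1,\dots,B_n$. Each $B_i$ sees $B_{i+1}$ side-visibly, and $B_{i+1}$ blocks $B_i$ from every $B_k$ with $k\ge i+2$. This is consistent with Lemma~\ref{sight in a north-monotonic sequence}, which allows a south rectangle in an $N$-monotone set to see at most two others in that set, and in fact at most one to its NE side by Lemma~\ref{side-viz-lemma}. A path uses exactly this allowance.

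Concretely, I would take $B_i=[i,i+\tfrac12]\times[-i,i]$ for $i=1,\dots,n$. The north corners $(i,i)$ are increasing in both coordinates, so $\{B_i\}$ is $N$-monotone. I then check three things about sight among the $B_i$.
\begin{itemize}
\item $B_i$ sees $B_{i+1}$. The south corner of $B_i$ is $(i+\tfrac12,-i)$. The thin closed rectangle $[i+\tfrac12,i+1]\times[-i-\varepsilon,-i]$ meets $B_i$ only at its eye and meets $B_{i+1}$ on its NW side. It meets no other $B_k$, since $B_k$ for $k\ge i+2$ lies in $x\ge i+2$.
\item $B_i$ does not see $B_k$ for $k\ge i+2$. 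Any point of $B_k$ southeast of the eye of $B_i$ has $x\ge k\ge i+2$ and $y\le -i$. The corresponding rectangle of influence therefore contains $[i+1,i+\tfrac32]\times\{-i\}$, which lies in $B_{i+1}$, so the sight is blocked.
\item $B_k$ does not see $B_i$ for $k>i$, since $B_i$ lies entirely to the left of the eye of $B_k$.
\end{itemize}
Thus the edges among $S_2$ form exactly the path $b_1b_2\cdots b_n$.

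Next I place $S_1$ as $m$ unit squares $A_1,\dots,A_m$ in the region $x<0$, $y>n$. Each square lies strictly above and to the right of the previous one, for example with lower-left corners at $(-3m+3j,\,n+3j)$. I need the following facts.
\begin{itemize}
\item Each $A_j$ sees each $B_k$. Take the rectangle of influence from the eye of $A_j$ to the north corner $(k,k)$ of $B_k$. It is $[A_{j,S}(x),k]\times[k,A_{j,S}(y)]$. Every $B_l$ with $l<k$ has top $l<k$, so it misses this rectangle. Every $B_l$ with $l>k$ lies to the right of $x=k+\tfrac12$, so it misses it as well. The squares $A_{j'}$ with $j'>j$ lie entirely above $A_{j,S}(y)$, and those with $j'<j$ lie entirely to the left of $A_{j,S}(x)$, so none of them meets the rectangle.
\item No $A_j$ sees another $A_{j'}$. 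This follows exactly as in Proposition~\ref{complete_bipartite}: no other square has a point weakly southeast of the eye of $A_j$.
\item No $B_k$ sees any $A_j$, because every $A_j$ lies to the left of the eye of $B_k$.
\end{itemize}

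The step I expect to need the most care is the blocking argument in the middle case, where the closed-rectangle conventions matter. I must confirm that $B_{i+1}$ really intersects every candidate rectangle from $B_i$ to a farther $B_k$, including rectangles that touch $B_k$ only at a boundary point. I also need the edges from $B_i$ to $B_{i+1}$ and from $A_j$ to $B_k$ to be witnessed by non-degenerate rectangles that touch $B_i$ and $A_j$ only at the eye. The strict inequalities built into the coordinates above are chosen to make these checks routine, and they complete the representation of $K_{m,n}+P_n$.
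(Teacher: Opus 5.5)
Your proposal is correct and is essentially the paper's construction. The paper likewise keeps the $S_1$ block and reshapes $S_2$ into an $N$-monotone staircase in which consecutive rectangles see each other side-visibly. It does this by making each rectangle wider than the one above it, which is your tall-thin staircase reflected across the line $y=-x$, a symmetry of south-rectangle visibility. You also supply the explicit coordinates and blocking checks that the paper leaves to its figure.

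One small slip: with lower-left corners at $(-3m+3j,\,n+3j)$, the square $A_m$ is $[0,1]\times[n+3m,n+3m+1]$, which is not in $x<0$. So your witness rectangle from $A_m$ to the north corner $(1,1)$ of $B_1$ degenerates to a segment. Shifting the squares left, e.g.\ to lower-left corners $(-3m+3j-2,\,n+3j)$, restores the strict inequalities. In any case $A_m$ still sees $B_1$ through a point on the top side of $B_1$.
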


\begin{proof}
Define $S_1$ and $S_2$ as above. We construct $S_1$ as in the previous construction but for $S_2$ make each rectangle in the path wider than the one above it so that the rectangle above it sees it, as shown in Figure \ref{fig:SCRVG bipartite with path}. Then all the rectangles in $S_1$ see all the rectangles in $S_2$, and the rectangles in $S_2$ also form a path.
\end{proof}

\begin{figure}
    \centering
    \includegraphics[width=5cm]{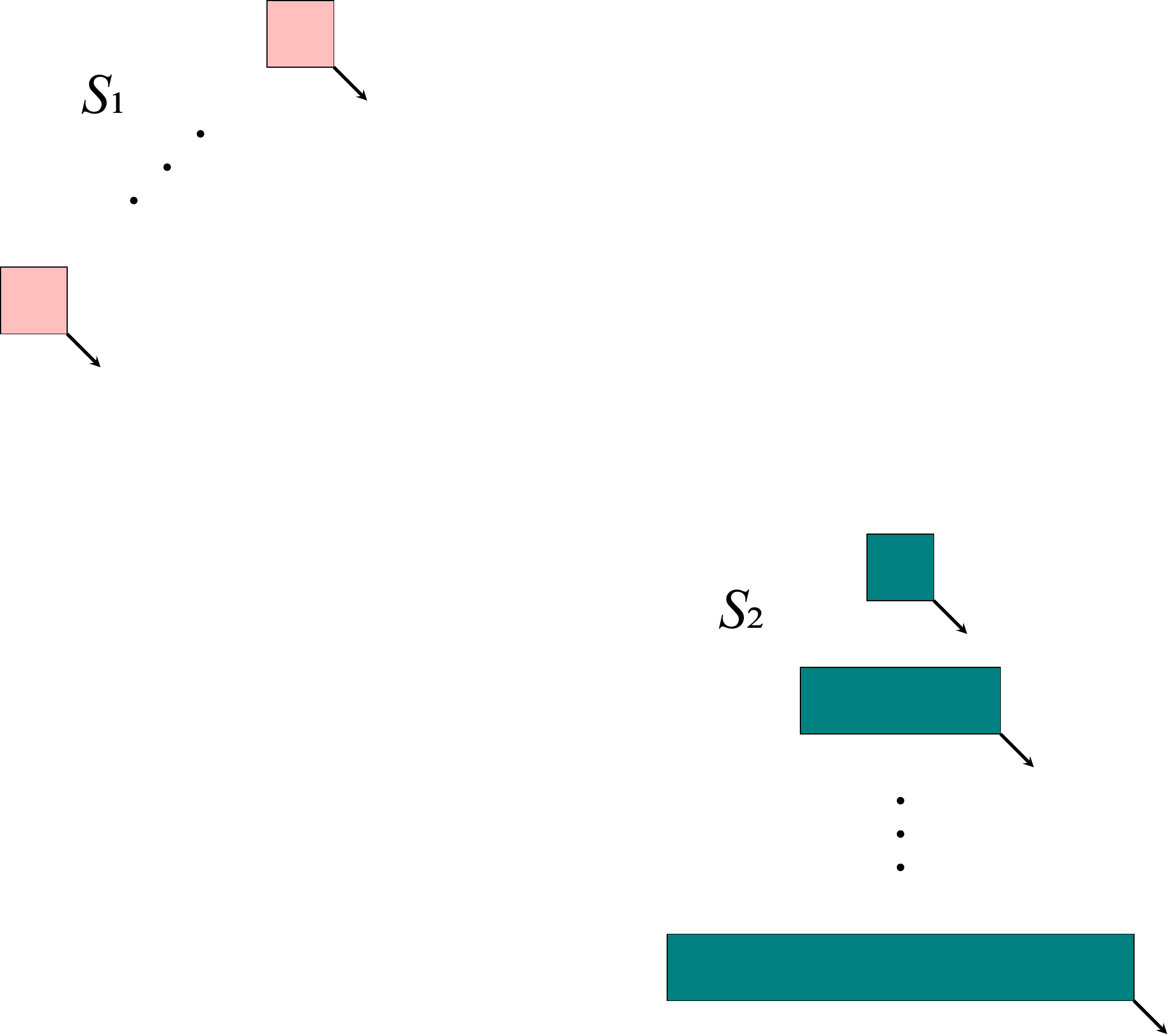}
    \caption{An SCRV representation of a complete bipartite graph with a path.}
    \label{fig:SCRVG bipartite with path}
\end{figure}

\begin{figure}
    \centering
    \includegraphics[width=4cm]{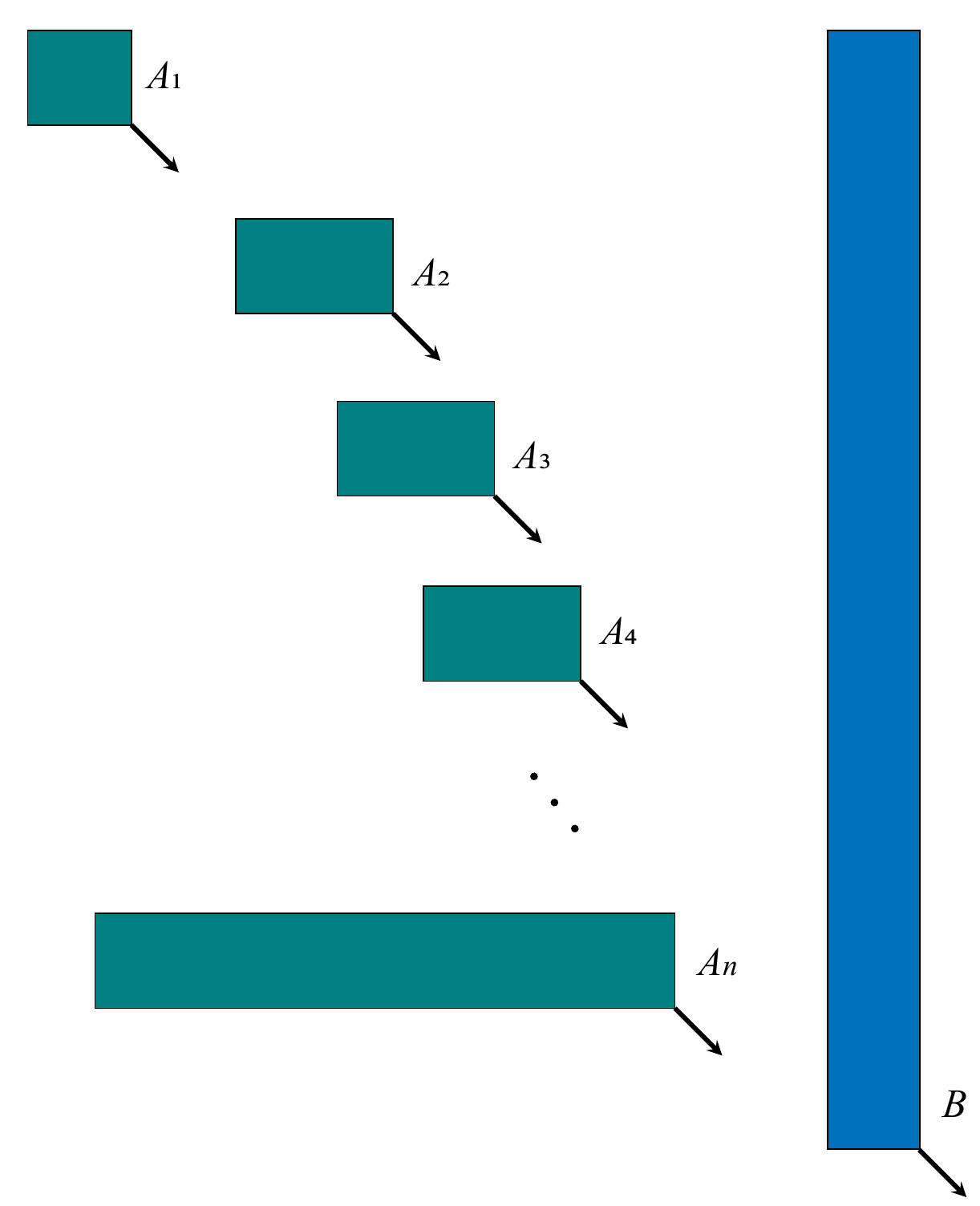}
    \caption{An SCRV representation of a cycle, and of a wheel using the additional rectangle $B$.}
    \label{fig:South-cycle}
\end{figure}
\begin{proposition}
    A cycle $C_n$ is an SCRVG. A wheel is also an SCRVG.
    \label{SCRVG_cycle}
\end{proposition}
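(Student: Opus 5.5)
We will give an explicit construction of south rectangles, modelled on the path construction in the proof that $K_{m,n}+P_n$ is an SCRVG (Figure~\ref{fig:SCRVG bipartite with path}), and then add a single rectangle to obtain the wheel. Label the cycle $a_1,a_2,\ldots,a_n$. We place rectangles $A_1,\ldots,A_n$ as a descending staircase. Each $A_{i+1}$ has its north corner strictly below and to the right of the south corner of $A_i$. In addition, $A_{i+1}$ is chosen wide enough that its NE side and north corner lie in the quadrant southeast of $A_i$'s eye, so that $A_i$ sees $A_{i+1}$ through a small rectangle of influence. This produces the path $a_1a_2\cdots a_n$.

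The key point for the path is that the staircase must cause blocking, so that $A_i$ sees nothing further down except $A_{i+1}$. We will make each $A_{i+1}$ extend far enough to the right, past all later rectangles. Then any rectangle of influence from $A_i$'s south corner to $A_j$ with $j>i+1$ must meet $A_{i+1}$, since $A_{i+1}$ separates the eye of $A_i$ from everything below it in the quadrant. This is the same shadow argument used in Lemma~\ref{side-viz-lemma}. Since every rectangle is a south rectangle, Lemma~\ref{lem:acyclic} and its proof show that no rectangle sees anything above-left of its eye. So the only edges among $A_1,\ldots,A_n$ are the $n-1$ path edges.

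To close the cycle, we need $A_1$ to see $A_n$ while still being blocked from $A_2$ through $A_{n-1}$ except $A_2$. The plan is to make $A_n$ very long to the left, or alternatively very tall. We then route the sight of $A_1$ around the staircase: choose the staircase so that $A_2,\ldots,A_{n-1}$ do not extend far enough left to cover the region directly below $A_1$'s eye. Then $A_1$ sees $A_n$ through a thin vertical rectangle of influence hugging the left of everything, ending on $A_n$'s NE side. Getting this to work simultaneously with the blocking of the middle rectangles is the main obstacle. Concretely, $A_1$ must see both $A_2$ and $A_n$ but no $A_j$ with $2<j<n$. By Lemma~\ref{side-viz-lemma} and Lemma~\ref{monotone-out-nbhd}, this is consistent: $A_1$ sees $A_2$ by side-visibility on one side and $A_n$ on the other, and this is exactly the picture we aim for.

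First, we will make sure $A_2$ blocks $A_3,\ldots,A_{n-1}$ from $A_1$ horizontally. Second, we will ensure that the long $A_n$ is seen by $A_1$ but by no other $A_j$ with $j<n-1$; each such $A_j$ is blocked from $A_n$ by $A_{j+1}$ lying between. For the wheel, we add a rectangle $B$ placed above and to the left of the whole configuration. It is positioned so that its south-corner quadrant contains the whole staircase, and it is thin enough that it blocks no existing sight line. We then verify that $B$ sees every $A_i$, which is plausible because the north corners of the $A_i$ form a monotone set with respect to $B$'s eye; if some of them are hidden, we will widen the $A_i$ appropriately. We will also check that $B$ is invisible to the $A_i$, which follows from the south-only direction argument of Lemma~\ref{lem:acyclic}.
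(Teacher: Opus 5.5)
Your path is fine, though not for the reason you give. With $A_{i+1}$ strictly southeast of $(A_i)_S$, you cannot make $A_{i+1}$ reach past the later rectangles, because $A_{i+2}$ starts to the right of $(A_{i+1})_S(x)$. You also do not need to: every later rectangle lies southeast of $(A_{i+1})_S$, so any rectangle of influence from $(A_i)_S$ to it contains all of $A_{i+1}$.

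\textbf{The cycle does not close.} In your staircase, $A_{j+1},\ldots,A_{n-1}$ lie strictly to the right of and below $(A_j)_S$, and $A_1,\ldots,A_{j-1}$ lie above it. So the thin vertical strip directly beneath $(A_j)_S$, and the thin horizontal strip just below $(A_j)_S$ running right, meet none of them. If you stretch $A_n$ left until its NE side passes under $(A_1)_S$, that side also passes under every $(A_j)_S$, and \emph{every} $A_j$ sees $A_n$. If instead you make $A_n$ tall on the right, every $A_j$ sees its NW side. Either way you get a fan, not $C_n$; your claim that $A_j$ is blocked from $A_n$ by $A_{j+1}$ is false for your placement.

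The missing idea is that for $2\le j\le n-2$ the rectangle $A_{j+1}$ must lie directly beneath the eye of $A_j$:
$(A_j)_N(x)<(A_{j+1})_N(x)<(A_j)_S(x)<(A_{j+1})_S(x)$.
This is the paper's staircase. Then $A_{j+1}$ covers a right-neighborhood of $(A_j)_S(x)$ at a height between $A_j$ and everything below, so it blocks $A_j$ from $A_{j+2},\ldots,A_n$. Only the first step should use your strictly southeast offset, so that $A_2,\ldots,A_{n-1}$ lie to the right of $(A_1)_S(x)$. Then $A_n$, with $(A_n)_N(x)<(A_1)_S(x)$ and $(A_n)_S(x)>(A_{n-1})_S(x)$, is seen by exactly $A_1$ and $A_{n-1}$, and $A_2$ still hides $A_3,\ldots,A_{n-1}$ from $A_1$. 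The paper's inequality chain has to be read this way, for $i\ge 2$ only. Applied at $i=1$ it would put $A_2$ under $(A_1)_S$ and hide $A_n$ from $A_1$.

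\textbf{The wheel step also fails.} All the $A_i$ look southeast, so none of them can see a hub $B$ placed above and to the left of everything; $B$ would have to see all of them itself. By Lemma~\ref{monotone-out-nbhd}, $N_+(B)$ must be $N$-monotone, with north corners rising from left to right. The north corners of your descending staircase fall instead, so you have the monotonicity backwards. Concretely, each $A_j$ with $2\le j\le n-1$ lies southeast of $(A_1)_S$, so every rectangle of influence from $B_S$ to $A_j$ contains all of $A_1$. Hence $B$ is adjacent to $A_1$ and at most one other rim vertex (possibly $A_n$, if it is stretched far left). Widening the $A_i$ while keeping the staircase cannot fix this.

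The paper does the opposite. It puts $B$ to the right of the whole cycle with $B_N(y)>(A_1)_S(y)$ and $B_S(y)<(A_n)_S(y)$. Each $A_i$ then sees the NW side of $B$ through a thin horizontal strip just below its own eye; this strip is empty because each $A_{i+1}$ lies strictly below $A_i$. The hub becomes adjacent to every rim vertex by being seen, not by seeing.
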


\begin{proof}
First we construct an SCRV representation of $C_n$. Call the $n$ rectangles $A_1, \ldots, A_n.$ Place $A_1.$ Then place $A_2, \ldots A_{n-1}$ such that $A_{i+1}$ is below $A_i$ and $(A_i)_N(x)<(A_{i+1})_N(x)<(A_i)_E(x)<(A_{i+1})_E(x)$, as shown in Figure \ref{fig:South-cycle}. Place $A_n$ below $A_{n-1}$ such that $(A_n)_N(x)<(A_1)_S(x)<(A_{n-1})_N(x)<(A_{n-1})_S(x)<(A_n)_S(x).$

To make a wheel, add one rectangle $B$ to the right of all the rectangles in the cycle, so that $B_N(y)>(A_1)_S(y)$ and $B_S(y)< (A_n)_S(y),$ again as shown in Figure~\ref{fig:South-cycle}.

\end{proof}

We now prove an edge bound for SCVRGs in Theorem~\ref{SCRVG edge bound} below.  Our proof is adapted from the proof of Theorem \ref{RIG_edge_bound} from \cite{standardboxes}.

We note that Theorem~\ref{RIG_edge_bound} and Theorem~\ref{SCRVG edge bound} imply that SCRVGs and closed RIGs have the same edge bound.  However, the maximal constructions provided are different.  Closed RIGs and SCRVGs are not the same graph class, as trees with more than four leaves are not closed RIGS \cite{liotta1998rectangle} but any tree is an SCRVG by Proposition \ref{trees}.  We don't know whether all closed RIGs are SCRVGs.  We conjecture that the construction of a maximal closed RIG given in~\cite{standardboxes} is not an SCRVG.

\begin{theorem}
    \label{SCRVG edge bound}
    An SCRVG with $n$ vertices has at most
    \begin{equation}
    \left[ \frac{n^2}{4} \right]+n-2
    \label{scrvg_formula}
    \end{equation}    
    edges for $n \geq 2$, and this bound is tight.
\end{theorem}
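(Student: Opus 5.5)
The proof has two parts: an upper bound proved by induction on $n$, adapted from Alon–Füredi–Katchalski, and a matching construction.

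\textbf{Upper bound.} The key structural fact is Lemma~\ref{sight in a north-monotonic sequence}. In an $N$-monotone set $X$ of south rectangles, each rectangle sees at most two others in $X$. So the directed subgraph induced on $X$ has at most $2|X|$ edges. In fact it should have at most $|X|-1$ edges, because only sight to the NE or NW side is possible and sight is acyclic by Lemma~\ref{lem:acyclic}. I would aim for the following intermediate statement: the subgraph induced on a monotone chain is close to a path, with at most about $|X|$ edges.

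The argument runs as follows.
\begin{enumerate}
\item Order the rectangles by the position of their south corners, say by decreasing $y$-coordinate of $A_S$.
\item Consider the rectangle $A$ whose south corner is lowest, or extremal in a suitable sense.
\item Show that removing one or two extremal rectangles deletes at most about $n/2+1$ edges. Then the recursion $f(n)\le f(n-2)+n$ gives $f(n)\le \left[\frac{n^2}{4}\right]+n-2$ from the base cases $f(2)=1$ and $f(3)=3$.
\end{enumerate}

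Concretely, I would imitate the Alon–Füredi–Katchalski argument and partition the rectangles into two classes. Each vertex's out-neighbourhood is $N$-monotone by Lemma~\ref{monotone-out-nbhd}. Consider the poset in which $A\prec B$ when $B_N$ lies below and to the right of $A_S$. Chains and antichains of this poset correspond to monotone sets, so Dilworth/Mirsky-type arguments apply.

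The Turán-type counting then goes like this. Sight between rectangles that are comparable in this quadrant sense gives a bipartite-like bulk of at most $\left[\frac{n^2}{4}\right]$ edges. Edges inside monotone sets contribute the extra linear term $n-2$, via the side-visibility Lemma~\ref{side-viz-lemma}: each rectangle has at most two side-visible neighbours, and endpoint effects remove two.

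\textbf{Main obstacle.} I expect the hardest step to be showing that the total count is not $\left[\frac{n^2}{4}\right]+2n$ but exactly $\left[\frac{n^2}{4}\right]+n-2$. That requires a careful case analysis of how side-visible edges interact with the bipartite bulk. I would first try the inductive deletion of the pair consisting of the rectangle with the topmost-leftmost south corner and the one with the bottom-rightmost north corner. I would then argue that together they have at most $n$ neighbours among the remaining $n-2$ rectangles plus the edge between them. This works because any third rectangle seen by both would violate monotonicity or blocking.

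\textbf{Tightness.} I would modify the construction of Proposition~\ref{complete_bipartite}. Take $K_{\lfloor n/2\rfloor,\lceil n/2\rceil}$ and add paths inside each part, as in the $K_{m,n}+P_n$ construction, applied to both sides. This gives $\left[\frac{n^2}{4}\right]+(\lfloor n/2\rfloor-1)+(\lceil n/2\rceil-1)=\left[\frac{n^2}{4}\right]+n-2$ edges. The remaining check is that the widening trick can be applied to $S_1$ as well without breaking visibility from $S_1$ to $S_2$. This needs the south corners of $S_1$ to stay strictly above and to the left of all north corners of $S_2$ while the staircase widens.
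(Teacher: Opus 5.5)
\textbf{Upper bound.} The upper-bound sketch is not a proof, and several of its quantitative claims are false.

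\emph{The recursion.} Since $[\frac{n^2}{4}]-[\frac{(n-2)^2}{4}]=n-1$, the target bound satisfies $g(n)=g(n-2)+n+1$. The recursion $f(n)\le f(n-2)+n$ that you say suffices would give $f(4)\le 5$, which contradicts the SCRV representation of $K_4$.

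\emph{The intermediate statement.} It is false that an $N$-monotone set induces at most $|X|-1$ edges. The $N$-monotone south rectangles $[\frac12,\frac95]\times[-2,-\frac32]$, $[1,\frac32]\times[-\frac65,-1]$, $[2,3]\times[-3,-\frac12]$ form a triangle.

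\emph{The deletion pair.} Most importantly, your proposed pair fails. Your justification only excludes rectangles \emph{seen by} both, and ignores rectangles that see the second one. Take $A=[0,1]\times[2r+2,2r+3]$, $B_i=[i-\frac12,i+1]\times[2i,2i+1]$ for $1\le i\le r$, and $Z=[r+2,r+3]\times[-10,1]$.
\begin{itemize}
\item $A$ has the topmost and leftmost south corner, and $Z$ has the lowest and rightmost north corner.
\item $A$ sees every $B_i$, and every $B_i$ sees the left side of $Z$.
\item $A\not\sim Z$, since every rectangle of influence from $A_S$ to $Z$ contains $(1,3)\in B_1$.
\end{itemize}
So $2r=2n-4$ edges meet $\{A,Z\}$, which exceeds $n+1$ once $n\ge 6$.

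\emph{The missing idea.} Choose the second rectangle \emph{inside} $N(A)$, as a source of the acyclic sight relation restricted to $N(A)$. With $A$ having the leftmost eye, nothing sees $A$, so $N(A)=N_+(A)$ is $N$-monotone. Pick $B_k\in N(A)$ seen by no other member of $N(A)$ (possible by Lemma~\ref{lem:acyclic}). Every edge between $B_k$ and $N(A)$ is then an out-edge of $B_k$ inside an $N$-monotone set, so there are at most two of them by Lemma~\ref{sight in a north-monotonic sequence}. Each rectangle outside $N(A)\cup\{A\}$ contributes at most one edge, to $B_k$. This gives exactly $(n-2)+2+1=n+1$. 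Deleting rectangles only removes blockers, so the induction hypothesis applies to $S\setminus\{A,B_k\}$.

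\textbf{Tightness.} The graph $K_{\lfloor n/2\rfloor,\lceil n/2\rceil}$ with a path in each part is the closed RIG of two increasing point chains in opposite quadrants. This is the standard extremal closed-RIG configuration. The paper explicitly leaves open, and conjectures negatively, whether the Alon--F{\"u}redi--Katchalski extremal closed RIG is an SCRVG. So your ``remaining check'' is the whole difficulty.

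The widening trick does not resolve it. Suppose $X_{j-1}$ extends right past $X_{j,S}(x)$ beneath $X_j$, so that $X_j$ sees its NE side. Then $(X_{j,S}(x),X_{j-1,N}(y))\in X_{j-1}$ lies in every rectangle of influence from $X_{j,S}$ to any point lower than the top of $X_{j-1}$. Under your own placement condition all of $S_2$ is that low, so $X_j$ sees no rectangle of $S_2$.

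The paper avoids this by realizing a different extremal graph. It takes an independent set $A$ of about $n/2$ rectangles adjacent to all others, and a set $B$ forming a chain of triangles with $\lfloor\frac43 b-1\rfloor$ internal edges. It adds one rectangle $C$ adjacent to all of $A$ and to two vertices of each triangle, with part sizes chosen according to $n \bmod 6$.
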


\begin{proof}
 First we show by induction that an SCRVG on $n$ vertices can have no more than $\left[\frac{n^2}{4}\right]+n-2$ edges.
    
    The bound holds for $2,$ $3,$ and $4$ vertices because the number of edges in the complete graph is less than or equal to the number of edges given by the formula. 
    
    Let $n \geq 5,$ let $S$ be a set of $n$ south rectangles, and let $G$ be the graph represented by $S.$ By way of induction, suppose that all SCRVGs on $n-2$ vertices have at most $\left[\frac{(n-2)^2}{4}\right]+(n-2)-2$ edges.
    
    Let $A$ be a rectangle in $S$ whose eye has minimal $x-$coordinate. Let $B_1,$~$\ldots,$~$B_r$ be the rectangles in $N(A)$. Since $A$ is a rectangle with minimal $x-$coordinate, there can be no rectangles in $S$ that see $A.$ Thus $N(A)=N_+(A)$. By Lemma~\ref{monotone-out-nbhd}, $N(A)$ is an $N$-monotone sequence 
    
    By Lemma~\ref{lem:acyclic}, $G$ has no directed cycles. Thus, there must be some $B_k$ that is not seen by any other rectangle in $N(A).$  

    Let $T$ be the set of rectangles $S\backslash(\{A\} \cup N(A))$.  If a rectangle is in $T$ then it is not adjacent to $A$ but may have an edge to $B_k$. Each rectangle $C$ which is neither $A$ nor $B_k$ is either in $N(A)$ or in $T.$ If $C$ is in $N(A)$ then $C$ is adjacent to $A$, and if $C$ is in $T$ then $C$ is not adjacent to $A$ but may be adjacent to $B_k.$ Additionally, at most two of these rectangles are adjacent to both $A$ and $B_k$ by Lemma~\ref{sight in a north-monotonic sequence}. 
    Then the number of edges incident to either $a$ or $b_k$ in $G$ is at most $(n-2)+2+1=n+1$ (that is $n-2$ edges from $N_+(a) \cup T$,  $2$ edges from $b_k$ to other vertices in $N(a),$ and $1$ edge between $a$ and $b_k)$.

  Counting the edges in $G$ gives $|E(G)| \leq E(G \backslash \{a,b_j\})+n+1.$ By the induction hypothesis, $|E(G \backslash \{A,B_j\})| \leq \left[\frac{(n-2)^2}{4}\right]+(n-2)-2.$ Thus $|E(G)| \leq \left[\frac{(n-2)^2}{4}\right]+(n-2)-2+n+1=\left[\frac{n^2}{4}\right]+n-2.$

We now show the bound is tight. A representation of an SCRVG $G$ on $n$ vertices with $\left[\frac{n^2}{4}\right]+n-2$  edges is shown in Figure \ref{fig:max-scrvg}.
Label the sets $A,$ $B,$ and $C$ of $G$ as shown in the figure. Suppose there are $a$ rectangles in $A,$ $b$ rectangles in $B,$ and $1$ rectangle in $C.$
 The graph corresponding to $B$ is a series of triangles which have an edge from one set of triangles to the next.  When $b$ is not divisible by $3,$ we form $B$ by adding rectangles in the order shown in Figure \ref{fig:max-scrvg}. There are $\left \lfloor\frac{4}{3}b-1 \right \rfloor$ edges within $B.$ 
 $G$ has all possible edges between the sets $A$ and $B$ and $A$ and $C$, which is $ab$ and $a$ edges, respectively. Two out of every three vertices of a triangle in $B$ have an edge to the vertex in $C,$ which is $\left \lceil \frac{2}{3}b \right \rceil$ edges. Thus there are \begin{equation}\left \lfloor\frac{4}{3}b-1 \right \rfloor + \left \lceil \frac{2}{3}b \right \rceil+ab+a \label{formula:max_edges} \end{equation} edges in the graph $G$.

If $n=6x$ for some integer $x$, put $3x-1$ rectangles in $A,$ $3x$ rectangles in $B,$ and $1$ rectangle in $C.$ Using  Formula~\ref{formula:max_edges}, we have $\left \lfloor\frac{4}{3}(3x)-1 \right \rfloor + \left \lceil \frac{2}{3}(3x) \right \rceil+(3x-1)(3x)+(3x-1)=9x^2+6x-2$ edges. In terms of $n,$ this is $\frac{n^2}{4}+n-2$ edges, which is indeed $\left[ \frac{n^2}{4} \right]+n-2$ since in this case $n^2$ is divisible by $4.$

One can check that the remaining cases also satisfy Formula~\ref{scrvg_formula} using the distributions that follow: 
If $n=6x+1$ put $3x$ rectangles in each of sets $A$ and $B$ and $1$ rectangle in $C.$
If $n=6x+2,$ put $3x$ rectangles in $A,$ $3x+1$ rectangles in $B,$ and $1$ rectangle in $C.$ 
If $n=6x+3,$ put $3x$ rectangles in $A,$ $3x+2$ rectangles in $B,$ and $1$ rectangle in $C.$
If $n=6x+4,$ put $3x+1$ rectangles in $A,$ $3x+2$ rectangles in $B,$ and $1$ rectangle in $C.$
If $n=6x+5,$ put $3x+1$ rectangles in $A,$ $3x+3$ rectangles in $B,$ and $1$ rectangle in $C.$
\end{proof}

\begin{figure}[!ht]
      \centering
      \includegraphics[width=\textwidth]{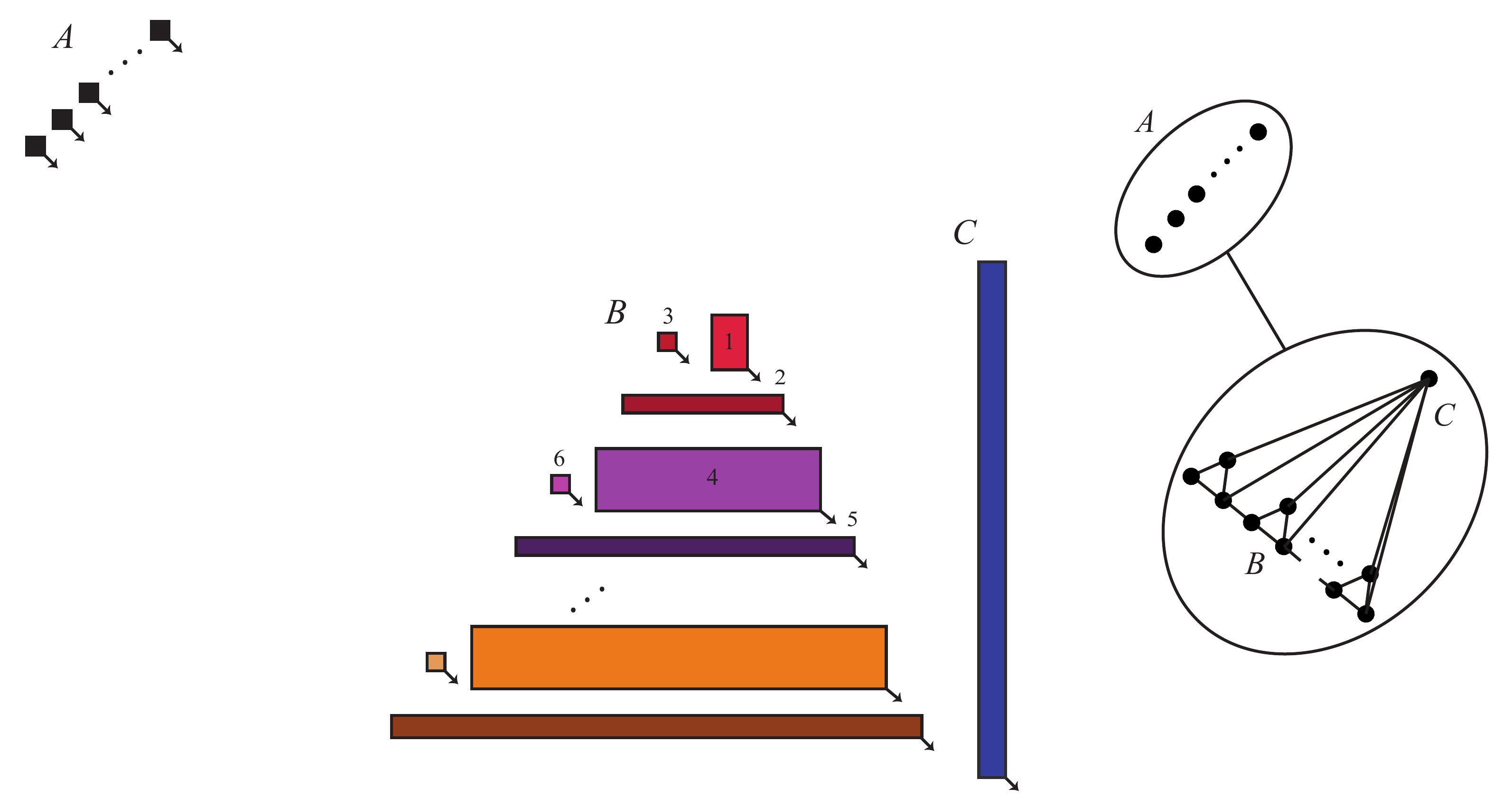}
      \caption{A representation of an SCRVG with $n$ vertices and $\left[\frac{n^2}{4}\right]+n-2$ edges.}
      \label{fig:max-scrvg}
\end{figure}

\section{Southwest Corner Rectangle Visibility Graphs}
\label{sec:SWCRVGs}
We now consider a natural extension from the previous section and allow representations with rectangles that are allowed to look in one of two perpendicular directions. In particular, we study representations of graphs which contain only south rectangles and west rectangles, \textbf{southwest corner rectangle visibility graphs}, or \textbf{SWCRVGs}. 

\begin{lemma}
A directed SWCRVG is acyclic. \label{SWCRVG-acyclic}
\end{lemma}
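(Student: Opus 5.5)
Following the proof of Lemma~\ref{lem:acyclic}, the plan is to find a real-valued potential on rectangles that strictly decreases along every directed edge. A directed cycle would then force the potential to drop strictly all the way around and return to its starting value, which is impossible.

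The natural choice is $\phi(A) = $ the sum of the $x$- and $y$-coordinates of the eye of $A$. Alternatively one can use a linear functional such as $\phi(p)=x-y$; the right one is fixed by checking which way the two directions see.

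The first step is to pin down exactly where a seen rectangle can lie. If $A$ is a south rectangle, its eye is the lower right corner and it looks into the quadrant $\{x \ge A_S(x),\ y \le A_S(y)\}$. If $A$ sees $B$, the non-degenerate rectangle of influence meets $A$ only at $A_S$, and $B$ must meet that quadrant. In fact $B$ lies to the southeast of $A_S$ in the weak sense, with $B$ meeting the open quadrant. Otherwise, as noted in the proof of Lemma~\ref{lem:acyclic}, $A$ would block its own view.

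Similarly, if $A$ is a west rectangle, its eye is the lower left corner and it looks into $\{x\le A_W(x),\ y\le A_W(y)\}$. So every rectangle seen by a south or west rectangle lies weakly below its eye.

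This suggests the simplest potential: the $y$-coordinate of the eye. The aim is to show that if $A$ sees $B$, then the eye of $B$ has strictly smaller $y$-coordinate than the eye of $A$.

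For a south or west rectangle $B$, the eye is on the bottom side of $B$, so $\mathrm{eye}(B)(y) = B_S(y) = B_W(y)$, the minimum $y$ over $B$. Since $B$ contains a point of the rectangle of influence, and that rectangle is non-degenerate with top edge at height $\mathrm{eye}(A)(y)$, the bottom of $B$ satisfies $B_S(y) \le \mathrm{eye}(A)(y)$. To get strictness, suppose equality held. Then the bottom edge of $B$ lies on the horizontal line through the eye of $A$, and $B$ lies entirely above that line. The only points of $B$ that the rectangle of influence can touch are then on that line, so the rectangle of influence meets $B$ only along the top boundary line of the rectangle of influence. I need to check that this is impossible. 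The rectangle of influence has $A$'s eye at one corner and meets $B$ at the opposite corner or boundary, and its top side lies on the line $y=\mathrm{eye}(A)(y)$. So $B$ would touch the rectangle of influence only on its top edge. This edge also contains a side of $A$ (the bottom side of $A$ extends from the eye along that line), and $A$ and $B$ do not intersect. The touching is then confined to the part of the top edge away from $A$, and I must argue that non-degeneracy forces the intersection with $B$ to be at the opposite corner, which lies strictly lower.

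This degenerate boundary case is where I expect the main obstacle. The cleanest fix is to use the definition that the opposite corner of the rectangle of influence lies on the boundary of $B$. That corner has $y$-coordinate strictly less than $\mathrm{eye}(A)(y)$ by non-degeneracy. Hence $B$ has a point strictly below $\mathrm{eye}(A)(y)$, and so $B_S(y)<\mathrm{eye}(A)(y)$.

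With strict decrease of the eye's $y$-coordinate along every edge established, the final step is immediate: along a directed path $a_1\to\cdots\to a_k$ the eye $y$-coordinates strictly decrease, so $a_k\ne a_1$ cannot see back to close a cycle, and the directed SWCRVG is acyclic.
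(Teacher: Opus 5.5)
Your proof is correct and takes essentially the same route as the paper: the $y$-coordinate of the eye ($A_S(y)=A_W(y)$, the bottom side of the rectangle) strictly decreases along every directed edge, so no directed cycle can close. Your justification of the strict inequality, using that the non-degenerate rectangle of influence has its opposite corner on the boundary of $B$, fills in a step the paper only asserts; the exploratory remarks about $x+y$ or $x-y$ potentials can be dropped.
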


\begin{proof}
The proof is analogous to the proof of Lemma~\ref{lem:acyclic}.  Suppose $G$ is a directed SWCRVG represented by a set of south and west rectangles, and suppose $a \to b$ is a directed edge from $a$ to $b$ in $G$.  Either $A$ is a south rectangle or a west rectangle.  In both cases, $A_S(y) = A_W(y) >B_S(y)= B_W(y)$, so $B$ doesn't see $A$.  Similarly, if $a_1 \to a_2 \to \cdots \to a_k$ is a directed path in $G$, $A_k$ doesn't see $A_1$.
\end{proof}

We now classify a few families of graphs which are SWCRVGs. Of course, all SCRVGs are also SWCRVGs.

In the following proposition, we consider the complete tripartite graph with a path $K_{l,m,n}+P_n$.  This is the graph with vertices partitioned into sets $A$, $B$, and $C$ of sizes $l$, $m$, and $n$ respectively, all possible edges between these three sets, and such that the induced subgraph on $C$ is a path.

\begin{proposition}
   $K_{l,m,n}+P_n$ is an SWCRVGs for all positive integers $l,$ $m,$ and $n.$
\end{proposition}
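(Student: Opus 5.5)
We give an explicit construction extending the SCRV representation of $K_{m,n}+P_n$, using west rectangles to supply the third part. The plan is to take the representation from the proposition for $K_{m,n}+P_n$, with $B$ in the role of $S_1$ and $C$ in the role of $S_2$. There $B$ is an $N$-monotone staircase of $m$ equal south squares. $C$ is an $N$-monotone staircase of $n$ south rectangles strictly below and to the right of $B$, each wider than the one above it so that consecutive members see each other and form the path. In that representation every $B$-rectangle sees every $C$-rectangle, and no two $B$-rectangles see each other.

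We then add the set $A$ of $l$ west rectangles. They are placed far to the right of everything, with vertical extent above the south corners of all rectangles of $B\cup C$, and arranged as a $W$-monotone (decreasing) staircase. Their west corners then form a monotonically decreasing chain. A west rectangle looks down and to the left from its lower-left corner, into the quarter-plane $\{x\le A_W(x),\ y\le A_W(y)\}$. Within $A$, each rectangle's west corner lies above and to the right of the next one's corner region, so no member of $A$ sees another. This mirrors the argument of Lemma~\ref{sight in a north-monotonic sequence}, and we will choose the staircase so that even the side-visible cases there do not occur, e.g.\ by making the steps large relative to the rectangle sizes.

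To make each $A$-rectangle see every rectangle of $B\cup C$, we need the rectangle of influence from $A_W$ to each target to avoid all other rectangles. The natural tool is the $-D$-monotonicity of out-neighborhoods (Lemma~\ref{monotone-out-nbhd}). A west rectangle's out-neighborhood must be $E$-monotone, i.e.\ the east corners (lower-right corners) of $B\cup C$, listed together, must form a monotonically decreasing sequence. So we reposition $B$ and $C$ into one long staircase going from upper-left to lower-right, with $B$ above-left of $C$. Each rectangle is seen by an $A$-rectangle through its NE corner region. Rectangles further down-left are exposed below the previous ones, because each rectangle is shifted left past the previous rectangle's east corner. The $C$ path rectangles must grow in width downward while still keeping their east corners in decreasing order, so their widths will be adjusted to preserve both the path visibilities and the staircase. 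Adjacency $A$--$B$ and $A$--$C$ is then obtained because the rectangle of influence from $A_W$ to the lower-right corner of each staircase member contains no other rectangle's point.

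We also need to check that the south-looking $B$ and $C$ rectangles do not see any $A$-rectangle. This holds because all of $A$ lies above the south corners of $B\cup C$, and the same height argument as in Lemma~\ref{SWCRVG-acyclic} applies. Finally we must confirm that $B$ still sees all of $C$ and nothing within $B$, and that $C$ only sees its path neighbors.

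The main obstacle is this simultaneous geometry. The configuration that lets $B$ see all of $C$ from its south corners, with $C$ to the lower-right of $B$, must coexist with a single decreasing staircase of east corners exposed to the $A$ rectangles from the upper right. I would first try making the $B$ squares tiny and the $C$ rectangles long horizontal strips whose right ends form the exposed staircase, and verify with a figure.
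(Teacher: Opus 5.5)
\textbf{There is a genuine gap.} Your route differs from the paper's, and it is viable. The paper makes a south part universal: an $N$-monotone chain of squares at the upper left. Its west part sits below-right of that chain, as a left-to-right sequence of rectangles whose vertical extents grow and are nested (north and west monotone). The path part is a pyramid of south rectangles below the west part. You instead make the west part $A$ universal from the upper right. But the proposal stops exactly where an idea is needed. By Lemma~\ref{monotone-out-nbhd}, your $B$ must be $E$-monotone, while still being independent and seeing all of $C$. Note that the east corner is the \emph{upper-right} corner, not the lower-right one. You defer this step to ``verify with a figure''. Your one concrete suggestion, a decreasing staircase of equal (possibly tiny) south squares, fails. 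If $B_{2}$ follows $B_{1}$ in such a staircase with side $s$, non-overlap forces $B_{2}$ either to drop by at least $s$ or to shift right by more than $s$. In every case part of $B_{2}$ lies strictly below and to the right of the south corner of $B_{1}$, and nothing in your layout lies between them. So $B_{1}$ sees $B_{2}$, and $B$ is not independent. Shrinking the squares changes nothing.

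\textbf{The missing step is to nest vertical extents.} Take $B_1,\ldots,B_m$ from left to right with disjoint $x$-ranges, each vertical extent strictly inside the previous one (bottoms increasing, tops decreasing). Then for $i<j$, $B_j$ lies to the right of the eye of $B_i$ and entirely above it, so $B$ is independent. Also the tops decrease as the right sides increase, so $B$ is $E$-monotone. Place the pyramid $C$ (each lower rectangle wider on both sides) below the bottom of $B_1$ and entirely to the right of $B_m$. Then each $B_i$ sees each $C_k$ through its north corner. Each $C_k$ sees $C_{k+1}$ and nothing further below, because the south corner of $C_k$ lies over the top side of $C_{k+1}$, which cuts every view to $C_{k+2},\ldots$. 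Finally, let $A$ be a $W$-monotone chain of west rectangles whose west corners all lie above the top of $B_1$ and to the right of $C_n$. Each $A_i$ then sees every member of $B\cup C$ through its east corner: earlier members of $A$ lie above that rectangle of influence and later ones to its right. No two members of $A$ see each other, and no rectangle of $B\cup C$ can see $A$. With this step supplied your construction works. It is essentially the paper's nesting trick applied, mirrored, to your independent south part rather than to the west part.
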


\begin{proof}
    An example is shown in Figure~\ref{SW 3-partite}.
    To make $A,$ place $l$ squares looking south in a north monotone sequence. To make $B,$ place $m$ rectangles looking west that form both a north and west monotone sequence. Place $B$ completely below and to the right of $A.$ To make $C,$ place $n$ south facing rectangles so that they form both a north and an east monotone sequence. Place $C$ completely below $B$, to the right of $A$ and the left of $B.$
\end{proof}

\begin{figure}
    \centering
    \includegraphics[width=9cm]{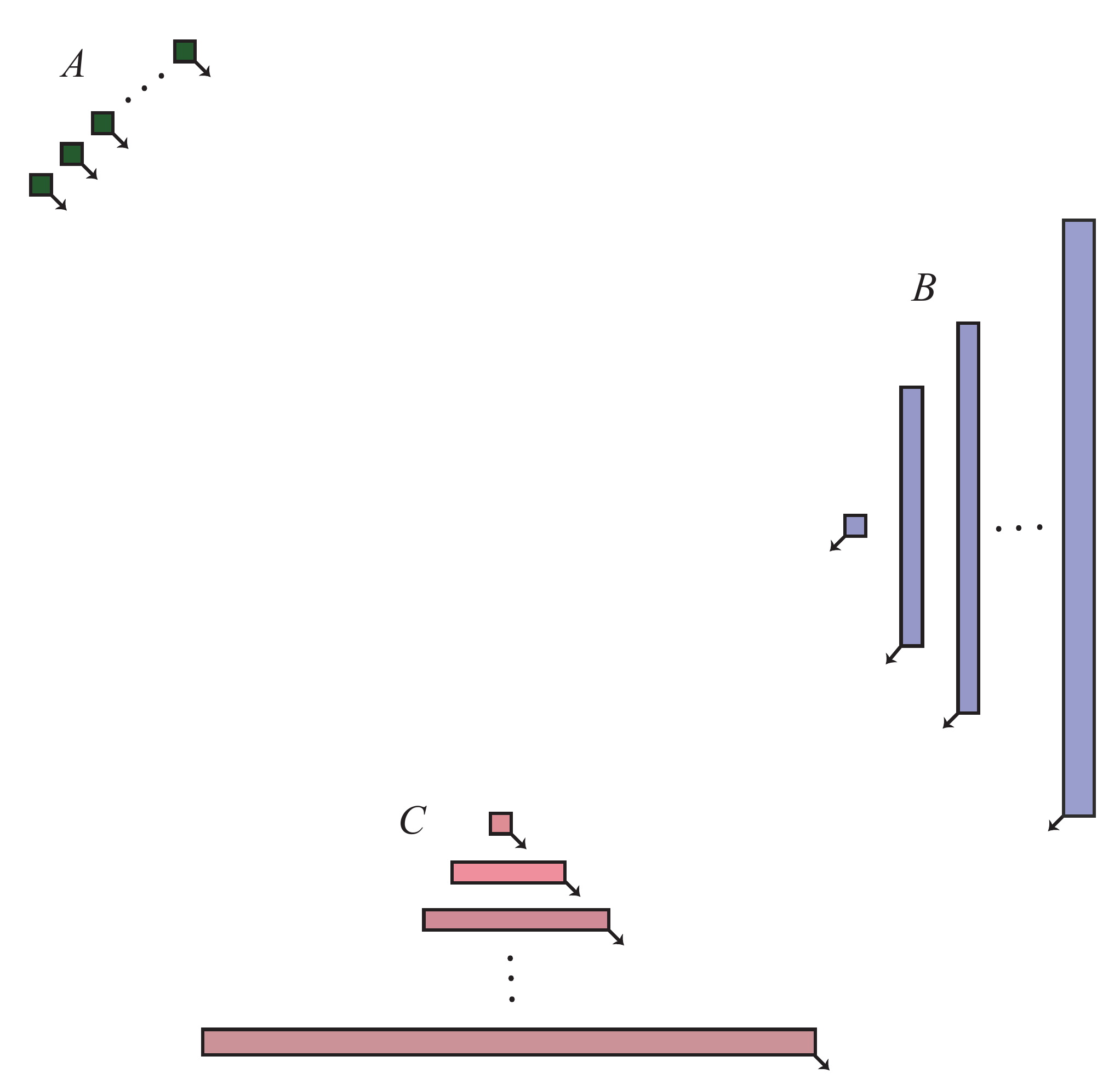}
    \caption{An SWCRV representation of a tripartite graph with a path.}
    \label{SW 3-partite}
\end{figure}

\begin{theorem}
    An SWCRVG on $n$ vertices has at most \[ \left[ \frac{n^2}{3}+\frac{n}{3}\right]-1\] edges, and the bound is tight. \label{SWCRVG-edge-bound}
\end{theorem}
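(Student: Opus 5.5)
We will follow the scheme used for Theorem~\ref{SCRVG edge bound}, but now remove three vertices at a time, since the bound $f(n)=\left[\frac{n^2}{3}+\frac{n}{3}\right]-1$ satisfies $f(n)-f(n-3)=2n-1$ (up to a check of the rounding in each residue class mod $3$). The base cases $n\le 4$ (and possibly $n=5$) are handled by comparing $\binom{n}{2}$ with $f(n)$ or by a direct small argument. For the inductive step we aim to find three vertices $a,b,c$ such that at most $2n-1$ edges are incident to $\{a,b,c\}$.

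\textbf{Choosing the three vertices.} Every rectangle in an SWCRV representation sees only into a region weakly below its eye, so a rectangle $A$ whose eye has maximal $y$-coordinate (breaking ties suitably) is seen by no one, and $N(a)=N_+(a)$. By Lemma~\ref{monotone-out-nbhd}, $N(a)$ is $N$-monotone if $A$ is a south rectangle and $E$-monotone if $A$ is a west rectangle. Since the directed graph is acyclic by Lemma~\ref{SWCRVG-acyclic}, the induced directed graph on $N(a)$ has a source $B$, which is adjacent to at most two rectangles of $N(a)$ by Lemma~\ref{sight in a north-monotonic sequence} (when the direction of $B$ matches the monotonicity type). Then we pick a third vertex $C$, a source of the directed graph on $N(a)\setminus\{B\}$ or an extremal rectangle of a second family. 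The count we want is as follows. Every vertex other than $a,b,c$ contributes at most two edges into $\{a,b,c\}$, except for a bounded number that contribute three, giving $2(n-3)+O(1)$. Together with at most $3$ edges inside $\{a,b,c\}$, the total should be $2n-1$. The key structural claim is that no vertex outside $\{a,b,c\}$ is adjacent to all three, apart from a few exceptions whose number is controlled by Lemma~\ref{side-viz-lemma}.

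\textbf{Main obstacle.} The hard step is the mixed-direction case. A south rectangle's out-neighborhood is $N$-monotone, but it may contain west rectangles, and Lemma~\ref{sight in a north-monotonic sequence} only controls rectangles of the matching direction in a monotone set. A west rectangle in an $N$-monotone set may see many others. I would try first to split $N(a)$ into its south and west members. Within each group, I would argue separately with monotonicity and side-visibility that each group is nearly an independent set, except for a path-like structure as in $K_{l,m,n}+P_n$. If that fails, an alternative is to choose $a$ as the top-most south rectangle and $b$ as the top-most west rectangle. Then I would show that a vertex adjacent to both is constrained, so that $a$, $b$ and a third extremal vertex have a common neighborhood of bounded size.

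\textbf{Tightness.} Use the tripartite construction $K_{l,m,n}+P_n$ from the preceding proposition with $l,m,n$ as equal as possible. This gives $lm+ln+mn+(n-1)$ edges with the path placed on the largest part. For $n=3x$ this is $3x^2+x-1=\frac{n^2}{3}+\frac{n}{3}-1$. We would check the residues $n\equiv 1,2 \pmod 3$ separately, possibly enlarging the path, or adding a few edges by modifying $A$ or $B$ as in the SCRVG construction, to reach the rounded value.
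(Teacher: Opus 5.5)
Your skeleton is the paper's: delete three vertices at a time, let $A$ have the highest eye so that $N(a)=N_+(a)$, and let $B$ be a source of the directed graph on $N(a)$. Two genuine gaps remain.

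\textbf{The target increment is off by one.} Since $\frac{n(n+1)}{3}-\frac{(n-3)(n-2)}{3}=2n-2$ is an integer, the two rounded values differ by exactly $2n-2$ in every residue class. So $f(n)-f(n-3)=2n-2$, not $2n-1$. In your own count, the number of edges meeting $\{a,b,c\}$ is at most $3+2(n-3)+t=2n-3+t$, where $t$ is the number of vertices outside the triple adjacent to all of $a,b,c$. What you need is therefore $t\le 1$. Allowing ``a few exceptions'' controlled by Lemma~\ref{side-viz-lemma}, i.e.\ $t\le 2$, gives $f(n-3)+2n-1=f(n)+1$, and the bound degrades at every step. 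Also, $n=5$ cannot be a base case by counting, since $\binom{5}{2}=10>f(5)=9$; it has to come from the inductive step with $n-3=2$.

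\textbf{The step that carries the proof is not done.} You never fix $C$, and you leave open the mixed-direction case you identify as the main obstacle. The missing idea is to take $C$ to be a source of the directed graph induced on $N(a)\cap N(b)$, not on $N(a)\setminus\{B\}$; if $N(a)\cap N(b)=\emptyset$, any $c$ gives $t=0$. Then $B$ sees $C$, and every $d$ adjacent to all of $a,b,c$ lies in the $N$-monotone set $N(a)$ and is seen by both $B$ and $C$.
\begin{itemize}
\item If $B$ is a south rectangle, it sees every member of $N(a)\cap N(b)$. Lemma~\ref{sight in a north-monotonic sequence} then gives $|N(a)\cap N(b)|\le 2$, and since $C$ is one of them, $t\le 1$.
\item If $B$ is west and $C$ is south, then $d$ is side-visible from $C$. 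A $d$ seen on its NW side would block $B$'s view of $C$.
\item If $B$ is west and $C$ is west, a $d$ whose east corner or SE side is seen by $C$ would be hidden from $B$ behind $C$.
\end{itemize}
In the last two cases only one side of $C$ is available, and by the argument of Lemma~\ref{side-viz-lemma} it carries at most one $d$. So $t\le 1$ and at most $2n-2$ edges meet the triple.

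Your fallback of taking $a$ to be the top-most south rectangle does not work. A west rectangle with a higher eye may see it, and then $N(a)\neq N_+(a)$.

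Your tightness part is fine as stated. With balanced parts and the path on a largest part you get $3x^2+3x$ for $n=3x+1$ and $3x^2+5x+1$ for $n=3x+2$. Both equal the bound, so no modification is needed.
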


\begin{proof}
    We first show that the bound is an upper bound. Since we proved that complete graphs are SCRVGs for $n<5$ (Proposition \ref{SCRVG complete graphs}) and thus SWCRVGs, the statement holds by virtue of the number of edges in the complete graph for $n<5$. We induct on $n \geq 5.$

    Let $S$ be a set of $n$ south and west rectangles and $G$ be the graph of $S.$ Suppose the bound holds for all SWCRVGs on $n-3$ vertices. Let $A$ be the rectangle of $S$ whose eye has maximal $y$-coordinate. Then there can be no rectangles in $S$ which see $A$, so $N(A)=N_+(A).$ Without loss of generality, suppose $A$ is a south rectangle. By Lemma~\ref{SWCRVG-acyclic}, there are no directed cycles in $G.$ Then there is some rectangle $B$ in $N(A)$ which is not seen by any of the other rectangles in $N(A).$ Likewise, if $N(A) \cap N(B)$ is nonempty, there is some rectangle $C$ in $N(A) \cap N(B)$ that is not seen by any rectangles in $N(A) \cap N(B).$ By Lemma \ref{monotone-out-nbhd}, $B$ and $C$ are part of an $N$-monotone sequence. 
\medskip

\noindent \textbf{Case 1: Either $N(A)$ or $N(B)$ is empty.}  In this case  $|N(A) \cap N(B) \cap N(C)| =0$.
\medskip

\noindent \textbf{Case 2: $B$ is a south rectangle.} Since $C \in N(B)$, $B$ sees $C.$ Since $B$ and $C$ are part of an $N$-monotone sequence, the north corner of $C$ cannot be visible from $B.$ Then $C$ is side-visible from $B.$ Since $N(A)$ is an $N$-monotone sequence and $B$ is a south rectangle, any rectangles in $N(A) \cap N(B)$ must be side-visible from $B$ and thus there are at most two of them by Lemma~\ref{side-viz-lemma}. Since $C$ is one of those rectangles, then there is at most one rectangle in $N(A) \cap N(B) \cap N(C).$
\medskip

\noindent \textbf{Case 3: $B$ is a west rectangle.} In this case, $B$ sees $C$ but it need not be side-visible from $B$. 
\medskip

\noindent \textbf{Case 3a: $C$ is a west rectangle.} Since $B$ and $C$ are part of an $N$-monotone sequence, any rectangle in $N(A) \cap N(B) \cap N(C)$ must be side-visible from $C.$ Thus there are at most two rectangles in  $N(A) \cap N(B) \cap N(C)$. However, there can in fact only be one, as a rectangle side-visible to its $SE$ side from $C$ blocks the sight from $A$ to $C,$ as shown in Figure \ref{Case-2a}. Thus there is at most one rectangle in $N(A) \cap N(B) \cap N(C).$ 
\medskip

\noindent \textbf{Case 3b: $C$ is a south rectangle.} If $C$ is a south rectangle, then any rectangles in $N(A) \cap N(C)$ must be side-visible from $C.$ By Lemma~\ref{side-viz-lemma} there are at most two of these rectangles. However there can only be one since a rectangle side-visible to $C$ from its $NW$ side would block sight from $B$ to $C.$
\medskip

Thus in every case $|N(A) \cap N(B) \cap N(C)| \leq 1$.  Suppose $C$ is adjacent to $k+1$ rectangles other than $A$ and $B$.  Then $k$ of those rectangles are not adjacent to at least one of $A$ and $B$.  So the number of edges incident to $a,$ $b,$ and $c$ is at most $(k+1) + 2(n-3) -k +3 = 2n-2$.

\begin{figure}
    \centering
    \includegraphics[width=5cm]{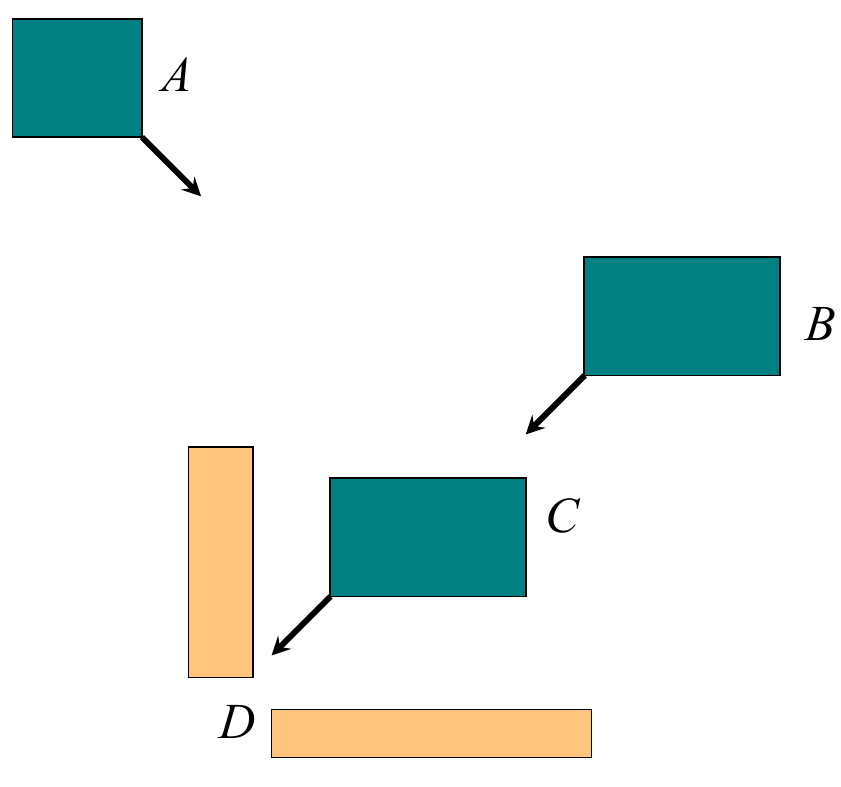}
    \caption{The two possibilities for a rectangle in $N(A) \cap N(B) \cap N(C)$ in Case 2a of Theorem~\ref{SWCRVG-edge-bound}.}
    \label{Case-2a}
\end{figure}

Thus $|E(G)| \leq |E(G-\{a,b,c\})|+2n-2.$ By the induction hypothesis, $|E(G-\{a,b,c\})| \leq \left[ \frac{(n-3)^2}{3}+\frac{n-3}{3}\right]-1.$ Thus $|E(G)| \leq \left[ \frac{(n-3)^2}{3}+\frac{n-3}{3}\right]-1+2n-2=\left[ \frac{n^2}{3}+\frac{n}{3}\right]-1.$

It remains to show the bound is tight. We give a construction as shown in Figure \ref{SW 3-partite}. It is a three-partite graph with a path in one of the parts. Label the parts $A,$ $B,$ and $C$ as shown in the figure, with $C$ being the part with the path, and suppose the parts have $a,$ $b,$ and $c$ rectangles respectively. Then there are complete bipartite graphs between each of the parts and $c-1$ edges within $C$, for a total of $ab+ac+bc+c-1$ edges. If $n=3x,$ we distribute the rectangles evenly among the three parts. If $n=3x+1,$ we put $x$ rectangles in each of $A$ and $B$ and $x+1$ rectangles in $C.$ If $n=3x+2$, we put $x$ rectangles in $A$ and $B$ and $x+2$ rectangles in $C.$ These configurations all have exactly $[\frac{n^2}{3}+\frac{n}{3}]-1$ edges.
 \end{proof}

\section{Corner Rectangle Visibility Graphs}

We now turn our attention to CRVGs which have no restriction on which direction their rectangles can look. We begin by classifying some graphs as CRVGs.

\begin{proposition}
\label{4partite}
    A complete 4-partite graph $K_{j,k,l,m}$ is a CRVG.
\end{proposition}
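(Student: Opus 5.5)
The plan is a direct construction, extending the idea behind the SWCRV representation of $K_{l,m,n}+P_n$ in Figure~\ref{SW 3-partite}. We use one part per viewing direction. Part $J$ is $j$ south rectangles, part $K$ is $k$ west rectangles, part $L$ is $l$ north rectangles, and part $M$ is $m$ east rectangles.

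Within each part we arrange the rectangles as small congruent squares along a staircase that is monotone in the direction opposite to their eyes. So $J$ is $N$-monotone and $K$ is $E$-monotone, in the same way as the sets in Propositions~\ref{complete_bipartite} and~\ref{trees}. The staircase is spaced so that no square is side-visible from another square in its own part. By the argument of Lemma~\ref{sight in a north-monotonic sequence}, a $D$-rectangle in a $-D$-monotone set can only see side-visible members of that set. Hence choosing the squares disjoint in both coordinate projections (a strict diagonal staircase) makes each part an independent set.

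Next we place the four staircases in a pinwheel arrangement around a central point, so that each of the six pairs of parts has one part lying entirely inside the viewing quadrant of every eye of the other part. Concretely, I would aim for the following layout.
\begin{itemize}
\item $J$ (south, seeing southeast) sits in the far northwest.
\item $K$ (west, seeing southwest) sits in the northeast, but with all of it below $J$.
\item $L$ (north, seeing northwest) sits in the far southeast.
\item $M$ (east, seeing northeast) sits in the southwest, with all of it above $L$.
\end{itemize}
The six required sightings are then assigned as follows: $J$ sees $K$ and $L$; $K$ sees $M$; $L$ sees $M$ and $J$; and one of $J$, $K$, $M$ sees the remaining partner. Which rectangle sees which exactly will be read off from a figure, just as in Proposition~\ref{complete_bipartite}, where one whole staircase lies strictly southeast of another. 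The staircases are long diagonals at $45^\circ$, so each eye's quadrant can be made to contain the entire other staircase.

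The main obstacle is blocking. Every rectangle in a part $X$ must see every rectangle in a part $Y$ via a rectangle of influence that avoids all other rectangles, including the other members of $Y$ and the rectangles of the two remaining parts. For a single pair this is the same check as in Proposition~\ref{complete_bipartite}. The rectangle of influence from an eye to a square of a staircase can reach a visible corner of that square, since the staircase is monotone in the correct direction, as in Lemma~\ref{monotone-out-nbhd}. Equivalently, the rectangles of $Y$ form an antichain with respect to the viewer's quadrant, so they do not shadow one another.

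The real work is to verify that the third and fourth parts lie outside all these rectangles of influence. I would first handle this by shrinking the squares and pushing each staircase far out along its own diagonal ray from the center. The rectangles of influence between $X$ and $Y$ are then confined to the bounding box of $X\cup Y$, and we check case by case that the other two staircases avoid that box. If some pair fails under this check, I would fall back to adjusting rectangle widths, as in the $K_{m,n}+P_n$ construction, so that each rectangle of influence lands on a side rather than a corner. Side-visibility then only needs a thin strip.
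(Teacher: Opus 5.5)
\textbf{There is a genuine gap} at the step where you claim every target part is an antichain for every viewer (``the staircase is monotone in the correct direction''). Congruent squares on a strict diagonal staircase are monotone only in two \emph{opposite} directions. Your $J$ and $L$ are up-right staircases, so they are $N$- and $S$-monotone. Your $K$ and $M$ are down-right staircases, so they are $E$- and $W$-monotone.

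Consider the pair $J$--$K$.
\begin{itemize}
\item By Lemma~\ref{monotone-out-nbhd}, the members of $K$ seen by a south rectangle of $J$ form an $N$-monotone set. The north corners of $K$ strictly decrease, so such a rectangle sees at most one member of $K$.
\item Each west rectangle of $K$ sees an $E$-monotone subset of $J$. The east corners of $J$ strictly increase, so it sees at most one member of $J$.
\end{itemize}
Hence there are at most $|J|+|K|$ edges between $J$ and $K$, fewer than the required $|J|\,|K|$ already when $|J|=|K|=3$. The same count rules out $J$--$M$, $K$--$L$ and $L$--$M$. Only the two ``opposite'' pairs $J$--$L$ and $K$--$M$ are compatible with your shapes.

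Your fallback does not rescue this. Lemma~\ref{monotone-out-nbhd} holds whether a sightline ends at a corner or on a side. Moreover, Lemma~\ref{side-viz-lemma} allows at most two side-visible rectangles per viewer, so you cannot have a viewer see many members of a part ``on a side''.

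The layout fails independently for the pair $J$--$L$. Whichever of the two sees the other, the rectangle of influence contains the closed box with opposite corners at the south corner of the $J$-rectangle and the north corner of the $L$-rectangle. In your pinwheel, $J$ is far northwest and $L$ far southeast, while $K$ (below $J$) and $M$ (above $L$) sit nearer the center. So both $K$ and $M$ lie in that box, and your bounding-box check cannot succeed here. Your sighting assignment also lists $J$--$L$ twice and leaves $J$--$M$ and $K$--$L$ unassigned.

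The missing idea is what the paper means by making each part ``a monotone sequence in two different directions''. Each part must be monotone in two \emph{perpendicular} directions while staying independent, which forces nested, non-square rectangles. For instance:
\begin{itemize}
\item south rectangles stacked vertically, with $x$-ranges strictly nested and shrinking downward ($W$- and $S$-monotone);
\item west rectangles in a row, with $y$-ranges strictly nested and growing to the right ($N$- and $W$-monotone);
\item north rectangles stacked, with $x$-ranges shrinking upward ($N$- and $E$-monotone);
\item east rectangles in a row, with $y$-ranges shrinking to the right ($S$- and $E$-monotone).
\end{itemize}
In each case no rectangle lies in another member's viewing quadrant. Place these as small clusters in a cross: $J$ near $(-1,2)$, $K$ near $(2,\tfrac12)$, $L$ near $(1,-2)$, $M$ near $(-2,-\tfrac12)$. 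The sightings are then $J\to K$, $J\to L$, $M\to J$, $K\to L$, $L\to M$, $M\to K$. Each target is monotone in the direction its viewers require, and every sight box misses the other two clusters.
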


\begin{figure}[!ht]
      \centering
      \includegraphics[width=.6\textwidth]{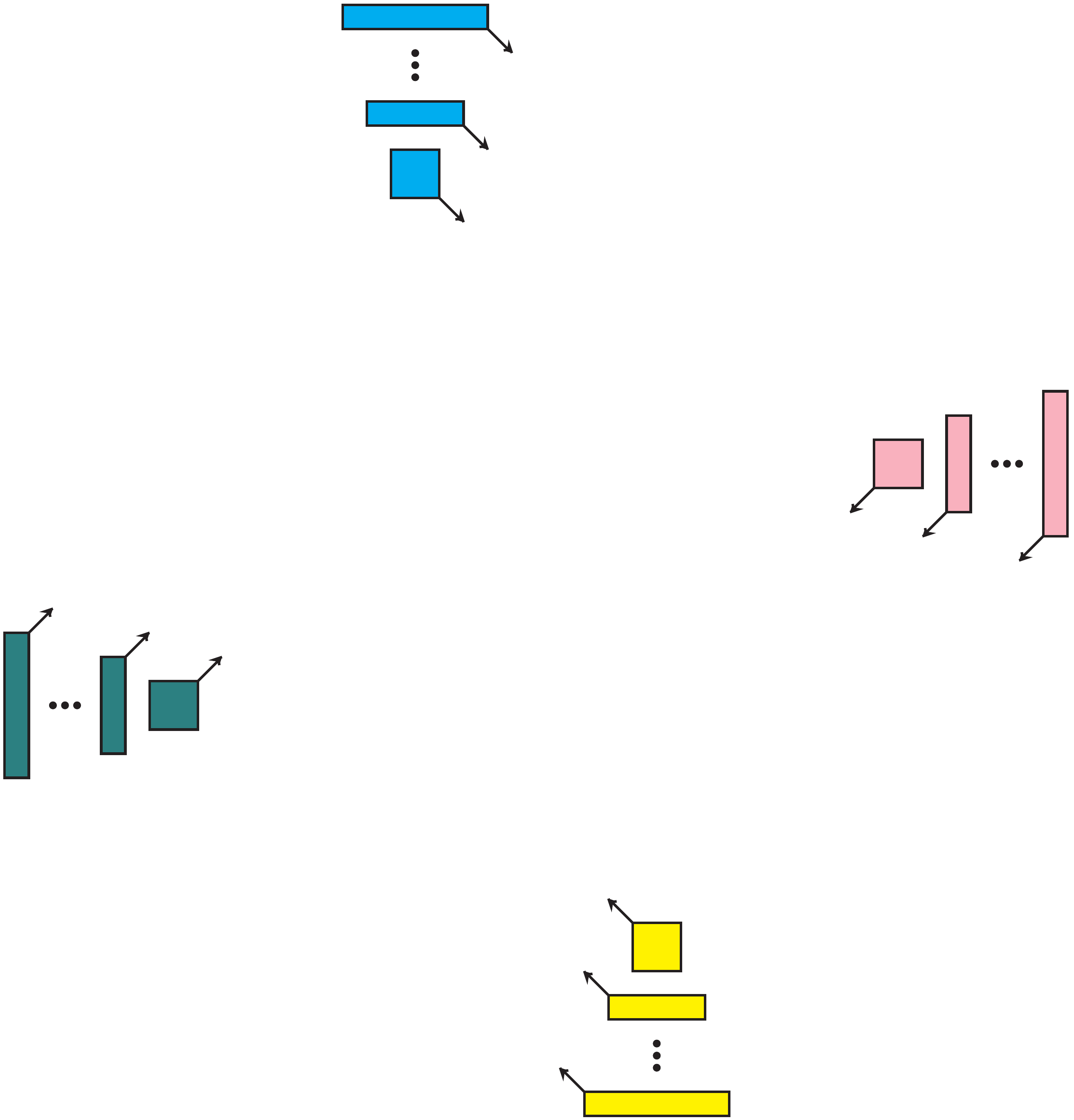}
      \caption{A CRV representation of a complete 4-partite graph.}
      \label{fig:4-partite}
\end{figure}

\begin{proof}
    Place rectangles in each part so they make a monotone sequence in two different directions. See Figure~\ref{fig:4-partite} for an example. 
\end{proof}

Note that if $n$ is divisible by 4, then the complete 4-partite graph $K_{n/4,n/4,n/4,n/4}$ with $n$ vertices has $\frac{3}{8} n^2$ edges, which is greater than the number of edges in the maximal SWCRVG with $\left [\frac{n^2}{3}+\frac{n}{3}\right]-1$ edges for all $n>1$.  Therefore $K_{n/4,n/4,n/4,n/4}$ is a CRVG that's not a SWCRVG.

\begin{proposition}
    The complete graph $K_n$ for $n \leq 5$ is a CRVG.
\end{proposition}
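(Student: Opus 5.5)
The plan is to reduce everything to a single construction for $K_5$ and then handle the smaller complete graphs by deleting rectangles. For $n \leq 4$ the claim already follows from Proposition~\ref{SCRVG complete graphs}, since every SCRVG is a CRVG. Alternatively, I will first record that deleting a rectangle from a CRV representation never destroys an existing sight. A rectangle of influence that meets no rectangle of $R$ other than its two endpoints still meets none after another rectangle is removed. So deleting a rectangle from a representation of $K_5$ yields a graph containing $K_4$ on four vertices, i.e.\ $K_4$ itself, and so on downward. Hence it suffices to give one CRV representation of $K_5$.

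For $K_5$ I would start from the SCRV representation of $K_4$ in Figure~\ref{SCRVG K4}, with rectangles $A_1,\dots,A_4$. By Lemma~\ref{lem:acyclic} they are ordered so that $A_i$ sees $A_j$ for $i<j$. I then add a fifth rectangle $E$ whose eye points in a different direction. The natural attempt is to place $E$ so that it is partly seen and partly seeing. Some of the $A_i$ see $E$ through the south quarter-plane: put a corner or side of $E$ just southeast of their eyes, outside the shadows of the later $A_j$. Then $E$, as a north or east rectangle, looks back into the configuration and sees the remaining $A_i$. Lemma~\ref{monotone-out-nbhd} constrains this. The out-neighborhood of $E$ must be $-D$-monotone for its direction $D$, so I would choose $D$ so that the set of $A_i$ it must see is monotone in the required sense. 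Lemma~\ref{side-viz-lemma} also allows $E$ to pick up up to two rectangles by side-visibility, which gives additional freedom. If modifying the figure proves too rigid, I would instead build $K_5$ from scratch using two or three directions, for example a south--north pair of ``central'' rectangles that see each other, surrounded by rectangles looking inward. This is in the spirit of Proposition~\ref{4partite}, where monotone sequences in two directions give complete multipartite structure; $K_5$ would correspond to five singleton parts, which suggests extending that figure by one rectangle.

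Verification is then finite: I check the ten pairs, exhibiting for each a rectangle of influence from one eye to the other rectangle that avoids the remaining three rectangles.

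The main obstacle is finding the placement of the fifth rectangle. The south corners of $A_1,\dots,A_4$ march southeast, so a single extra eye cannot see all four directly. The first thing to try is letting two of them (say $A_1$ and $A_2$, whose shadows reach farthest) see $E$, while $E$ sees $A_3$ and $A_4$ directly or side-visibly. I would then adjust widths so that no shadow of $A_3,A_4$ blocks the sight from $A_1,A_2$ to $E$.
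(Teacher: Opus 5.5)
Your reduction to a single representation of $K_5$ by deleting rectangles is correct, and it is exactly the paper's argument. The paper's proof then just exhibits such a representation in Figure~\ref{fig:K5-CRVG}. That construction is the entire content of the proposition, and it is what your proposal never supplies. You describe where you would search and end by calling the placement of the fifth rectangle ``the main obstacle.'' Without explicit rectangles and eyes, and a check of the ten sight rectangles, the case $n=5$ is not proved.

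Moreover, the search you outline cannot succeed.
\begin{itemize}
\item If $A_1,\dots,A_4$ are south rectangles forming a clique, their eyes form a strictly southeast chain, and only the northwestern member of a pair can see the other. So $A_1$ already sees $A_2,A_3,A_4$. If it also saw $E$, its out-neighborhood would be an $N$-monotone $K_4$, contradicting Lemma~\ref{no-monotone-K4} (Corollary~\ref{outdegree-lemma}). Your ``first thing to try'' is therefore impossible.
\item Hence $E$ must see $A_1$. By the same corollary $E$ has out-degree at most $3$, so some $A_j$ with $j\ge 2$ sees $E$.
\item A west $E$ seeing $A_1$ would have its bottom above $A_1$'s eye, and an east $E$ would have its right side left of it; either way no $A_j$ could see $E$. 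A south $E$ is excluded by Proposition~\ref{SCRVG complete graphs}.
\item So $E$ is a north rectangle, and its out-neighborhood is $S$-monotone (Lemma~\ref{monotone-out-nbhd}). It contains at most one $A_i$, since their south corners decrease. Hence $A_2$ and $A_4$ both see $E$.
\item Since $E$ reaches strictly southeast of $A_4$'s eye without containing it, $E$ lies entirely to the right of that eye or entirely below it.
\item Apply Lemmas~\ref{sight in a north-monotonic sequence} and~\ref{side-viz-lemma} to the $N$-monotone set $\{A_2,A_3,A_4\}\subseteq N_+(A_1)$. One of $A_3,A_4$ lies horizontally between the eyes of $A_2$ and $A_4$ and straddles the horizontal line through $A_2$'s eye. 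The other lies vertically between those eyes and straddles the vertical line through $A_2$'s eye.
\item If $E$ is to the right, the first of these meets every sight rectangle from $A_2$ to $E$. If $E$ is below, the second does.
\end{itemize}
So no CRV representation of $K_5$ contains four south rectangles, and extending Figure~\ref{SCRVG K4} is a dead end. You need a genuinely mixed-direction configuration, written down explicitly and verified pair by pair.
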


\begin{figure}[!ht]
      \centering
      \includegraphics[width=.6\textwidth]{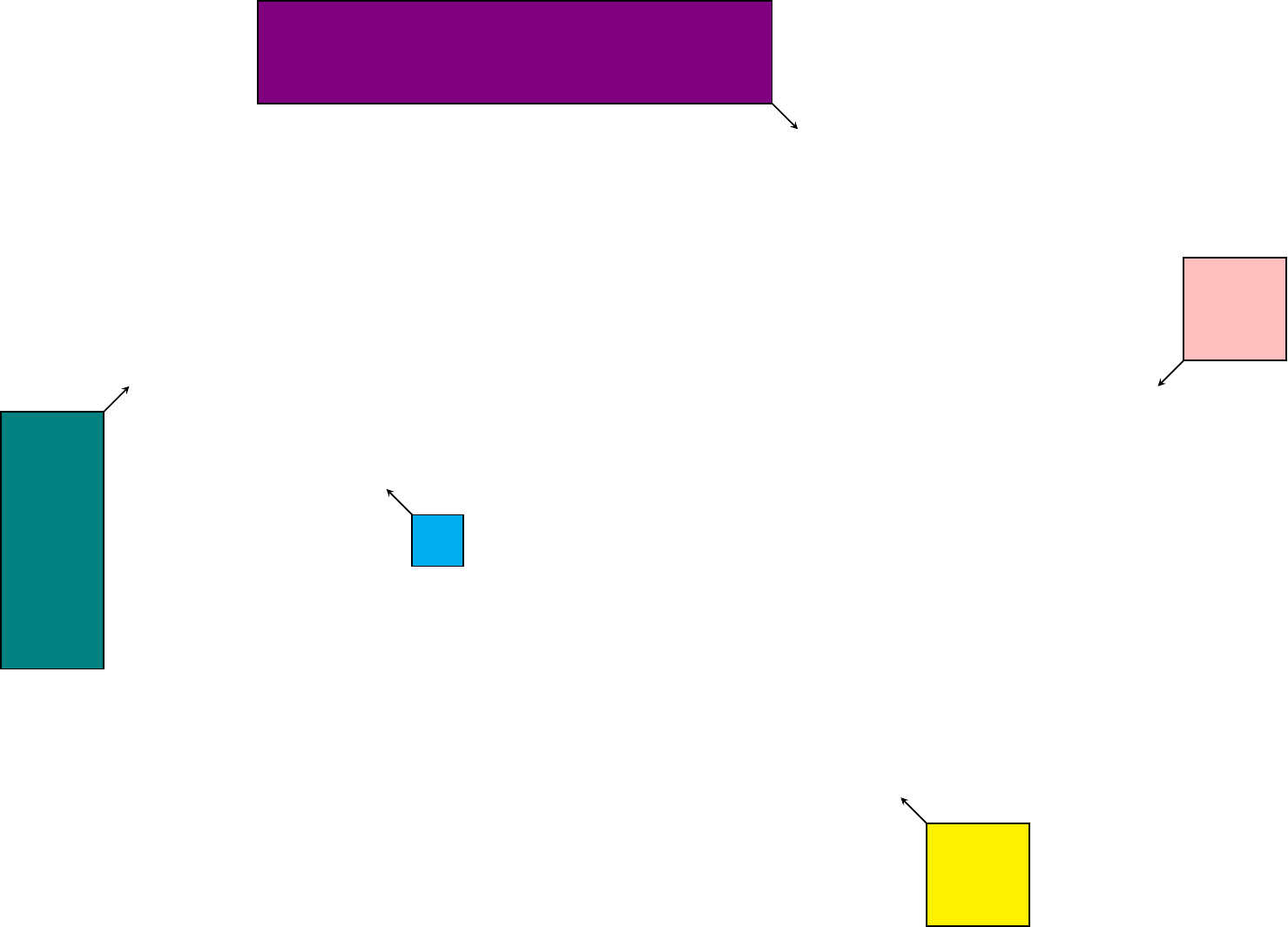}
      \caption{A CRV representation of $K_5$.}
      \label{fig:K5-CRVG}
\end{figure}

\begin{proof}
A CRV representation of $K_5$ is shown in Figure~\ref{fig:K5-CRVG}. The smaller complete graphs can be obtained by deleting rectangles from this representation.
\end{proof}

We will now work toward a proof that $K_6$ is not a CRVG.

\begin{lemma}
There is no $D$-monotone representation of $K_4.$ \label{no-monotone-K4}
\end{lemma}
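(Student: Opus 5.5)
Without loss of generality let $D$ be north, so we have four rectangles $R_1,\dots,R_4$ whose north corners are monotonically increasing when ordered by $x$-coordinate. Each rectangle may have an eye in any of the four directions. The plan is to count directed edges and show that some pair is never adjacent. Since $K_4$ has six edges, each must be realized by at least one of the two directions of sight. We use Lemma~\ref{sight in a north-monotonic sequence} only for south rectangles: a south rectangle in an $N$-monotone set sees at most two others in the set. So the main work is to bound what north, east and west rectangles can see inside an $N$-monotone set.

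\textbf{Step 1: north-pointing eyes.} For a north rectangle $R_i$, every other rectangle $R_j$ in the set is either to the lower left of $R_i$'s eye (if $j<i$) or has its north corner to the upper right (if $j>i$). By the monotonicity argument of Lemma~\ref{monotone-out-nbhd}, the rectangles with $j>i$ seen by $R_i$ must form an $S$-monotone family. However, their north corners are $N$-monotone. I would argue that the rectangle of influence from $R_{i,N}$ to $R_j$ contains the north corner of $R_{i+1}$ whenever $j>i+1$. It also cannot pass below $R_{i+1}$ without meeting it, because $R_{i+1}$ extends downward from a corner lying inside that quarter-plane. Hence a north rectangle sees at most $R_{i+1}$ plus possibly rectangles with $j<i$ that wrap around; this has to be checked via the sides. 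I expect to get at most two seen rectangles, mirroring Lemma~\ref{side-viz-lemma}.

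\textbf{Step 2: east and west eyes.} These are handled similarly. A west eye looks into the lower-left quadrant, which contains north corners only of earlier rectangles, and those lie to the left and below. Blocking by the nearest such rectangle $R_{i-1}$ should limit visibility to $R_{i-1}$ plus one side-visible rectangle. An east eye looks upper-right and is symmetric.

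\textbf{Step 3: counting.} Once every rectangle, whatever its direction, sees at most two others in the set, we get at most $8$ directed edges for $6$ undirected edges. That alone is not a contradiction, so more care is needed. I would sharpen the count using acyclicity-type ordering. A south or west rectangle $R_i$ can only see rectangles with lower eye height, and in an $N$-monotone set its "interior" views are blocked. The key refinement is that the extreme rectangles $R_1$ and $R_4$ each see at most one other rectangle. For example, $R_1$ looking south or west sees nothing in the set, since everything else lies to its upper right. Looking north or east, $R_1$'s view is blocked beyond $R_2$. With $R_1$ and $R_4$ each contributing at most $1$, and $R_2$ and $R_3$ at most $2$ each, the total is at most $6$. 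Equality then forces every directed edge to be distinct and tightly placed. The final step is a short case analysis showing the pair $\{R_1,R_4\}$ cannot be joined: any rectangle of influence between them contains a corner of $R_2$ or $R_3$. That gives the contradiction.

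The main obstacle is Steps 1–2: rigorously controlling side-visibility for north, east and west eyes inside an $N$-monotone set. Rectangles can be long and wrap around, so the geometric case analysis, like that in Lemma~\ref{side-viz-lemma}, is where the difficulty lies. If the per-rectangle bound fails, I would instead attack directly the claim that $R_1$ and $R_4$ cannot see each other and that $R_1R_3$ and $R_2R_4$ cannot both be edges.
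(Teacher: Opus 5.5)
\textbf{There is a genuine gap: the per-rectangle bounds in Steps 2--3 are false, and so is the claim that $R_1$ and $R_4$ cannot be adjacent.}

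Inside an $N$-monotone set, an east rectangle can see \emph{every} later rectangle, not just $R_{i+1}$ plus one side-visible one. Take $R_1=[0,1]\times[-1,0]$ with its eye at its east corner $(1,0)$, and let $R_2=[2,3]\times[3,4]$, $R_3=[4,5]\times[2,5]$, $R_4=[6,7]\times[1,6]$. The north corners $(0,0),(2,4),(4,5),(6,6)$ are increasing. $R_1$ sees $R_2,R_3,R_4$ at their west corners through the rectangles of influence $[1,2]\times[0,3]$, $[1,4]\times[0,2]$ and $[1,6]\times[0,1]$. None of these meets any other rectangle. Reflecting in the line $y=-x$ preserves $N$-monotonicity (reversing the order) and swaps east and west eyes. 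This gives a west $R_4$ that sees all of $R_1,R_2,R_3$, each at its east corner.

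What is actually true is the following:
\begin{itemize}
\item A north rectangle sees nothing in the set, since its quadrant lies above every earlier rectangle and to the left of every later one. Your Step 1 has the north eye looking the wrong way, though its bound survives.
\item A south rectangle sees at most two others, by the paper's lemma.
\item An east $R_i$ sees only later rectangles, and a west $R_i$ only earlier ones, possibly all of them.
\end{itemize}
The directed-edge total is therefore bounded only by $3+2+2+3=10$, not $6$, so counting yields no contradiction.

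The same example refutes your final step and the first half of your fallback. $R_1$ and $R_4$ \emph{can} be adjacent, and $[1,6]\times[0,1]$ contains no corner of $R_2$ or $R_3$. So no argument that looks at a single pair can succeed. You have to allow the edge between the extreme rectangles and show that the way it is realized constrains positions and eye directions enough to destroy another edge.

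That is what the paper does. In its notation ($B,C,D,E$ for your $R_1,\dots,R_4$), $B\sim E$ requires one of three configurations: $B_S(x)>D_S(x)$; or $E_W(y)<C_W(y)$; or both $B_S(x)>C_S(x)$ and $E_W(y)<D_W(y)$. In each case it examines how $C\sim E$ and $D\sim E$ can be realized and which eye directions this forces on $C$ and $D$. It then concludes that $B\sim C$ or $B\sim D$ must fail. A joint geometric case analysis of this kind, rather than degree counting, is the missing idea.
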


\begin{proof}

Without loss of generality, let $B,C,D,$ and $E$ form a north-monotone sequence, labeled with their north corners going from left to right.

We first consider the edge $B\sim E$. This edge can happen in one of three ways: either $B_{S}(x) > D_{S}(x)$, or  $E_{W}(y) < C_{W}(y)$, or both $B_{S}(x) > C_{S}(x)$ and  $E_{W}(y) < D_{W}(y)$, as shown in Figure~\ref{fig:outdeglemmacases}.  The first two cases are analogous.

\begin{figure}[H]
\centering
        \includegraphics[width=3cm]{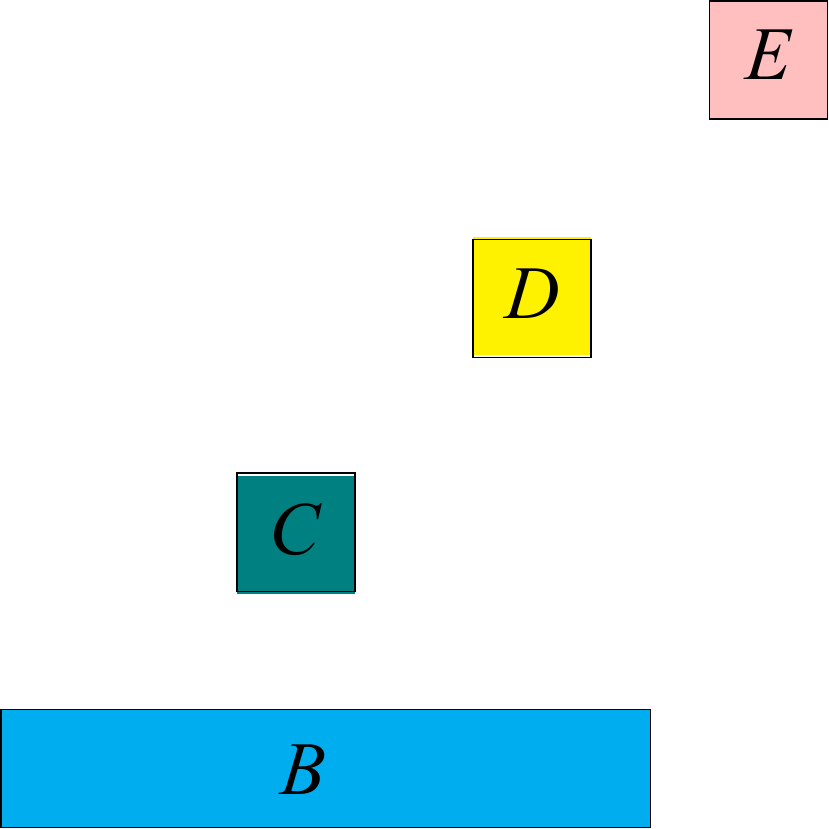} \hfill \includegraphics[width=3cm]{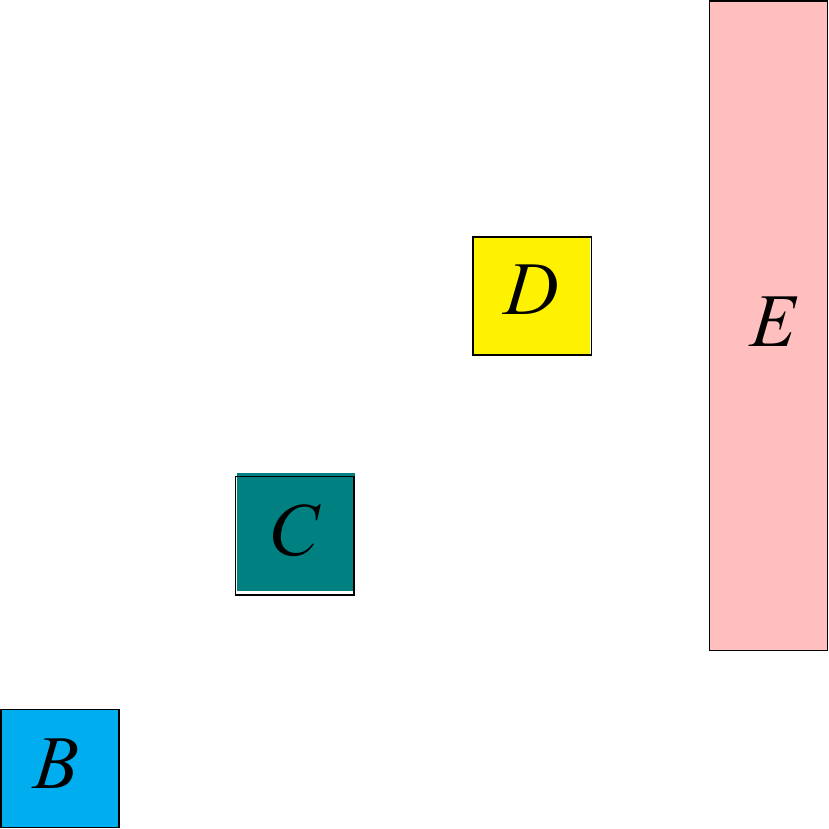}  \hfill \includegraphics[width=3cm]{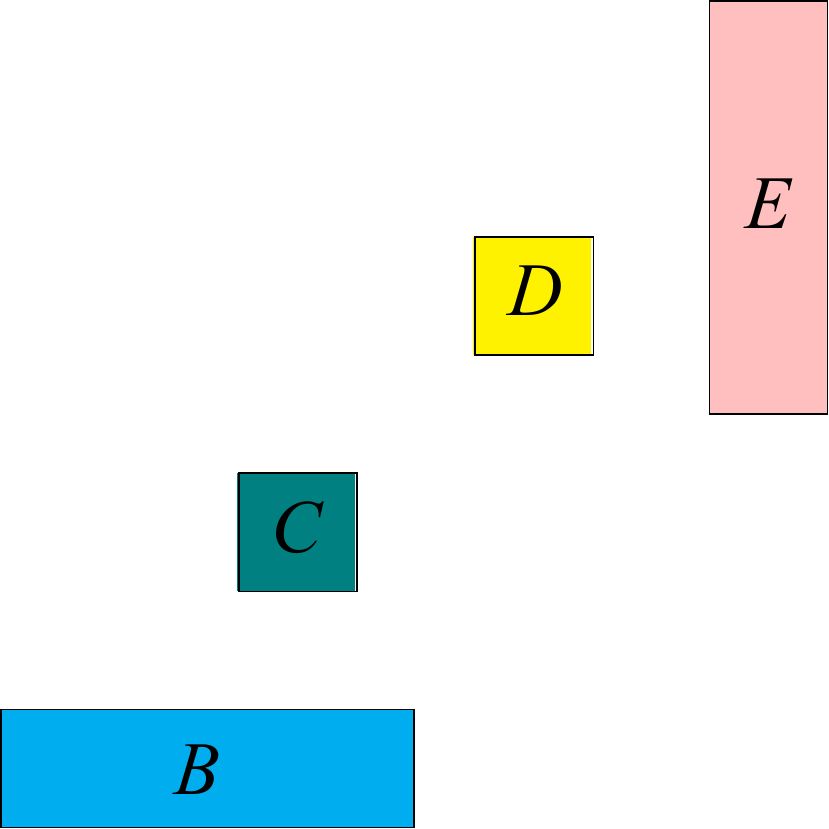}
    \caption{Three possible ways to create the $B \sim E$ edge as described in Lemma~\ref{outdegree-lemma}.}
    \label{fig:outdeglemmacases}         \label{fig:outdeglemmacase1} \label{fig:outdeglemmacase1symm}
    \label{fig:outdeglemmacase2}
\end{figure}

\bigskip

\noindent \textbf{Case 1. $B_{S}(x) > D_{S}(x)$.}  We also know that $C \sim E$.  This can happen in one of two ways: either $C_{S}(x) > D_{S}(x)$ or $E_{W}(y) < D_{W}(y)$, as shown in Figure~\ref{fig:outdeglemmasubcases}.

\begin{figure}[H]
        \centering
        \includegraphics[width=4cm]{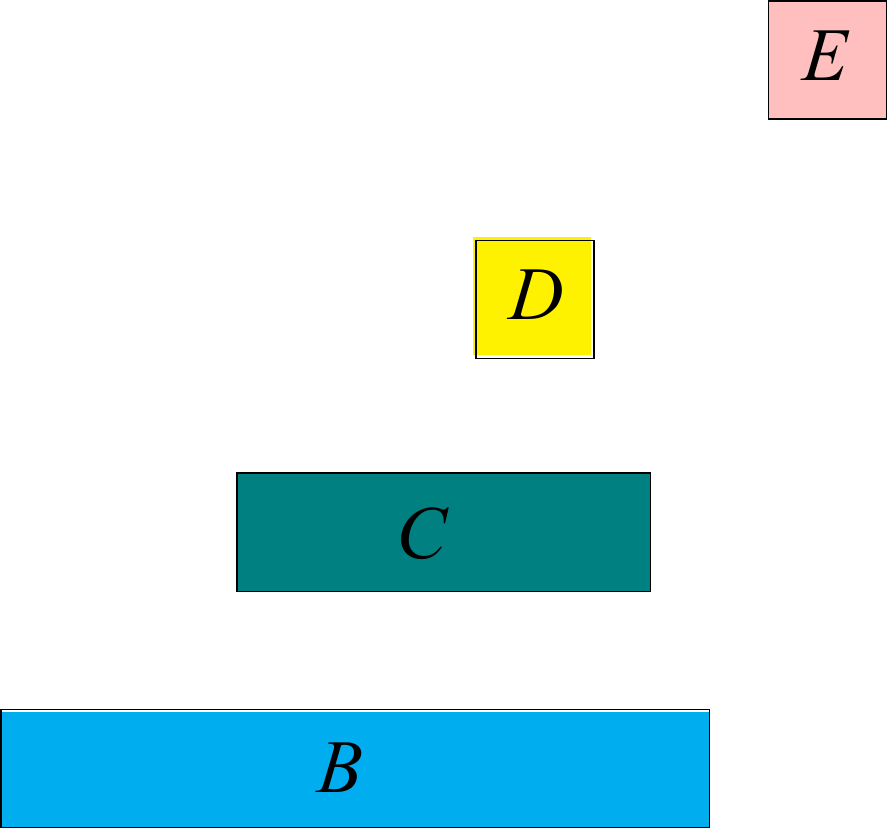} \hspace*{.2\textwidth} \includegraphics[width=4cm]{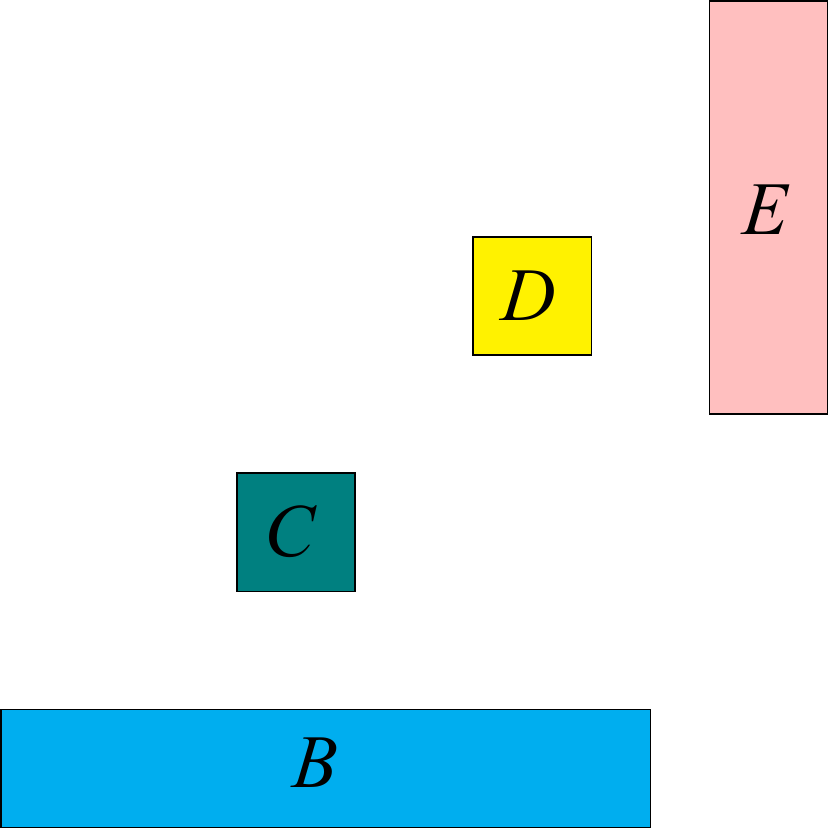}
 \caption{The subcases of Case 1 of Lemma~\ref{outdegree-lemma}.}
    \label{fig:outdeglemmasubcases} \label{fig:outdeglemmacase1b} \label{fig:outdeglemmacase1a}
\end{figure}

\bigskip

\noindent \textbf{Case 1a.  $C_{S}(x) > D_{S}(x)$.}  In this case $B$ and $D$ can't see each other.

\bigskip

\noindent \textbf{Case 1b. $E_{W}(y) < D_{W}(y)$.}  If $E$ sees $D$ then $D_{S}(y)< E_{S}(y)$. Now to have the $C \sim E$ edge we must have $C_{S}(x)>D_S(x)$ and then $D \not \sim B.$  Otherwise, $D$ sees $E$ so $D$ is a south or east rectangle.  If $D$ is an east rectangle then to have the $C \sim D$ edge, $C$ must also be an east rectangle, with $C_S(x)<D_S(x)$ in which case there can be no $B \sim C$ edge, as $C_S(x)<D_S(x)<B_S(x).$ If $D$ is a south rectangle, then either $C_S(x)>D_S(x)$ in which case there can be no $B \sim D$ edge, or $C_S(x)<D_S(x)$ and $C$ is an east rectangle, in which case there is no $B \sim C$ edge.

\bigskip

\noindent \textbf{Case 2. Both $B_{S}(x) > C_{S}(x)$ and  $E_{W}(y) < D_{W}(y)$.} Note that in this configuration, $E$ cannot see $D.$ Therefore to get the $D \sim E$ edge $D$ must see $E$ by facing either east or south. The rest is the same as in \textbf{Case 1b.} 

\bigskip

Thus it is not possible for the $N$-monotone sequence of rectangles $B,$ $C,$ $D,$ and $E$ to form a clique.
\end{proof}

\begin{corollary}
In a CRV representation of a directed graph, a rectangle in a clique of size at least 5 has out-degree at most 3 in the clique. \label{outdegree-lemma}
\end{corollary}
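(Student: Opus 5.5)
Let $A$ be a $D$-rectangle in a clique $Q$ of size at least $5$ in a CRV representation, and suppose for contradiction that $A$ has out-degree at least $4$ within $Q$. The plan is to locate a $D$-monotone copy of $K_4$ among the rectangles $A$ sees, which Lemma~\ref{no-monotone-K4} forbids.

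First, pick four rectangles $B_1,B_2,B_3,B_4\in Q$ that $A$ sees. By Lemma~\ref{monotone-out-nbhd}, the out-neighborhood $N_+(A)$ is a $-D$-monotone set of rectangles. Any subset of a monotone set is again monotone, since the restriction of a strictly monotone function to a subset of its domain is still strictly monotone. So $\{B_1,B_2,B_3,B_4\}$ is a $-D$-monotone set.

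Second, the $B_i$ all lie in the clique $Q$, so they are pairwise adjacent. In the undirected graph, adjacency means one rectangle sees the other in either direction, so the direction of sight is irrelevant here. Hence the four rectangles $B_1,\dots,B_4$ form a $-D$-monotone representation of $K_4$.

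Third, apply Lemma~\ref{no-monotone-K4}, which says that for every direction there is no monotone representation of $K_4$. This applies in particular to the direction $-D$, giving the desired contradiction. Therefore $A$ sees at most $3$ members of $Q$.

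There is one point to check in the third step: the lemma must be used in its full generality. Its proof fixes the north direction without loss of generality and lets each rectangle's eye point in an arbitrary direction. So it covers a $-D$-monotone set whatever the eyes of the $B_i$ are, and we may rotate so that $-D$ is north. The hypothesis that the clique has size at least $5$ is not actually needed in this argument; it only ensures that out-degree $4$ is possible at all. I do not expect a genuine obstacle, since the corollary follows directly by chaining Lemma~\ref{monotone-out-nbhd} with Lemma~\ref{no-monotone-K4}.
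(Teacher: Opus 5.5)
Your argument is correct and is the paper's proof: Lemma~\ref{monotone-out-nbhd} makes the clique members seen by $A$ a $-D$-monotone set, and Lemma~\ref{no-monotone-K4} rules out four pairwise adjacent rectangles in such a set. You simply spell out details the paper leaves implicit, namely the direction $-D$ (which the paper loosely writes as $D$) and the fact that the eyes of the $B_i$ are arbitrary; the one remaining implicit step, that the $B_i$ stay pairwise adjacent once the other rectangles are deleted, holds because removing rectangles can only unblock sight.
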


\begin{proof}
By Lemma~\ref{monotone-out-nbhd}, for a rectangle $A$, $N_+(A)$ is $D$-monotone.  By Lemma~\ref{no-monotone-K4}, if $N_+(A)$ is part of a clique, then $N_+(A)$ has at most 3 rectangles.
\end{proof}

\begin{lemma}
\label{K6 lemma}
    Let $S$ be a CRV representation of $K_6.$ Let $A$ and $B$ be rectangles in $S$ with out-degree $3$. Then
    \begin{enumerate}
        \item $N_+(A) \neq N_+(B),$
        \item If $A$ and $B$ are facing the same direction, then $N_+(A)$ and $N_+(B)$ are disjoint, and
         \item If $A$ and $B$ are facing opposite directions, then $N_+(A)$ and $N_+(B)$ are disjoint.
    \end{enumerate}
\end{lemma}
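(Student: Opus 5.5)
The plan is to prove the three items in order. Each one comes from combining Lemma~\ref{monotone-out-nbhd}, Lemma~\ref{sight in a north-monotonic sequence}, and Lemma~\ref{no-monotone-K4}. Throughout, $S$ represents $K_6$ and $A$, $B$ are rectangles of out-degree $3$. By Corollary~\ref{outdegree-lemma}, every out-degree in $S$ is at most $3$. By Lemma~\ref{monotone-out-nbhd}, if $A$ is a $D$-rectangle then $N_+(A)$ is a $-D$-monotone set of three rectangles, and it forms a triangle in $K_6$.

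\textbf{Item 1.} Suppose $N_+(A)=N_+(B)=X$ with $|X|=3$. Since $A\sim B$, one of them sees the other, say $A$ sees $B$. Then $B\in N_+(A)=X=N_+(B)$, so $B$ sees itself, which is absurd. (If $B$ sees $A$, the same argument applies symmetrically.) So the out-neighbourhoods differ. This item is essentially free.

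\textbf{Item 2.} Let $A$ and $B$ both be south rectangles. Then $N_+(A)$ and $N_+(B)$ are both $N$-monotone. Suppose they share a rectangle $C$. The first step is to show that $B$ itself can be appended to $N_+(A)$ (or $A$ to $N_+(B)$) so that the result is still $N$-monotone. Without loss of generality $A$ sees $B$, since the edge $A\sim B$ must be realized in some direction. Then $B\in N_+(A)$, and $N_+(A)$ is an $N$-monotone triple containing $B$.
\begin{itemize}
\item Each rectangle of $N_+(B)$ lies strictly southeast of $B_S$.
\item Each rectangle of $N_+(A)\setminus\{B\}$ has its north corner outside the open quadrant southeast of $B_S$. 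Otherwise it would block $A$'s view of $B$, or, by Lemma~\ref{sight in a north-monotonic sequence}, $B$ would be able to see at most two of them.
\end{itemize}
The goal is to show that any shared $C$ forces the set $N_+(A)\cup\{C'\}$ to be an $N$-monotone clique of size $4$, where $C'$ is an element of $N_+(B)\setminus N_+(A)$. This contradicts Lemma~\ref{no-monotone-K4}. If the union argument is awkward, the fallback is a count. Since $B$ is a south rectangle inside an $N$-monotone set, Lemma~\ref{sight in a north-monotonic sequence} says $B$ sees at most two members of $N_+(A)$, and those must be side-visible from $B$ (Lemma~\ref{side-viz-lemma}). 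I then compare this with the positions forced by $C\in N_+(B)$ to reach a contradiction.

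\textbf{Item 3.} Let $A$ be a south rectangle and $B$ a north rectangle. Then $N_+(A)$ is $N$-monotone and $N_+(B)$ is $S$-monotone, and these have the same slope direction. Suppose $C\in N_+(A)\cap N_+(B)$. Then $A_S$ lies to the northwest of part of $C$ and $B_N$ lies to the southeast of part of $C$. Because $A$ and $B$ see each other, one of them lies in the other's quadrant, say $B$ southeast of $A_S$. So $A$ sees $B$, and $C$ sits in the region between them. As in Item 2, I then argue that $\{B\}\cup N_+(A)$, or $\{A\}\cup N_+(B)$, contains four mutually adjacent rectangles that are monotone in one direction. This contradicts Lemma~\ref{no-monotone-K4}.

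The main obstacle is the geometric step in Items 2 and 3. It requires showing that a shared out-neighbour really forces monotonicity of the enlarged $4$-set, and this needs a careful case analysis of where the corners can sit relative to the quadrants of sight. I would try first to use the blocking (shadow) picture of Figure~\ref{fig:crvg-shadow}, splitting on which side of $C$ is seen by $A$ and which by $B$.
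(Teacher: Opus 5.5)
\textbf{There is a genuine gap in Items 2 and 3.} Item 1 is correct and is exactly the paper's argument. In Items 2 and 3 the geometric step is never carried out, and the reduction you sketch to Lemma~\ref{no-monotone-K4} cannot work as set up.

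\emph{Item 3.} You arrange that $A$ sees $B$. Then $B\in N_+(A)$ already, so $\{B\}\cup N_+(A)=N_+(A)$ has only three elements, and no monotone $4$-clique can come out of it.

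\emph{Item 2.} Nothing forces $N_+(A)\cup\{C'\}$ to be $N$-monotone. A rectangle $C'\in N_+(B)\setminus N_+(A)$ only needs a point strictly southeast of $B_S$. Its north corner can perfectly well lie strictly southeast of $B_S$, for example if $C'$ is a small rectangle in $B$'s viewing quadrant. Then $\{B,C'\}$ is already not $N$-monotone. Your fallback count fails too: it shows only that $B$ sees at most two members of $N_+(A)$, which is compatible with a shared out-neighbour, so it gives no contradiction by itself.

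\textbf{Closing Item 3.} You actually wrote down the key fact (``$C$ sits in the region between them'') but did not use it, and it finishes the proof directly. This is what the paper does. Let $a=A_S$ and $b=B_N$, and suppose $A$ sees $B$ via a rectangle $R$ with corners $a$ and some $p\in\partial B$. Then $p$ is strictly southeast of $a$, with $p_x\ge b_x$ and $p_y\le b_y$. A shared out-neighbour $C=[c_1,c_2]\times[d_1,d_2]$ has a point strictly southeast of $a$ and a point strictly northwest of $b$. Hence $c_2>a_x$, $c_1<b_x\le p_x$, $d_1<a_y$ and $d_2>b_y\ge p_y$. So $C$ meets $R$, contradicting that $R$ is a line of sight. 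The case where $B$ sees $A$ is the same after a $180^\circ$ rotation.

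\textbf{Item 2 and a caveat about the paper's step.} The paper takes $C\in N_+(B)\setminus N_+(A)$, which is nonempty by Item 1. Completeness then forces $C$ to see $A$, and the paper asserts this is geometrically impossible. That assertion is false in isolation. Take south rectangles $A=[0,1]\times[0,10]$ and $B=[2,3]\times[-5,1]$, and a north rectangle $C=[4,5]\times[-10,2]$. Then $A$ sees $B$, $B$ sees $C$ and $C$ sees $A$, but $B$ blocks $A$ from seeing $C$. You can even add $D=[1.5,3.9]\times[-20,-6]$ as a common out-neighbour of $A$ and $B$. So copying the paper's step would not close Item 2 either. A complete argument must use more of the $K_6$ and out-degree-$3$ structure.

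It helps to note the following. If $A$ sees $B$, then $N_+(A)\setminus\{B\}$ is a $2$-subset and $N_+(B)$ is a $3$-subset of the same four remaining vertices. So the overlap is automatic, and Item 2 really asserts that no two same-direction rectangles both have out-degree $3$.
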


\begin{proof}

\noindent 1. If $N_+(A) = N_+(B)$ then $A$ and $B$ can't see each other, since neither $A$ nor $B$ can be in its own out-neighborhood.  This contradicts the fact that the graph is complete.
\medskip

\noindent 2. Suppose $A$ and $B$ are facing the same direction and their out-neighborhoods are not disjoint. Without loss of generality, suppose they are both facing south. Then by part 1, their out-neighborhoods intersect in one or two rectangles. 

Since $A$ and $B$ are facing the same 
direction, in order to have an edge between $A$ and $B$, one must be in the viewing region of the other. Say $B$ is in the viewing region of $A$, as on the left in Figure~\ref{fig:cond2-rects}.

Let $C$ be a rectangle which is in $N_+(B)$ and not in $N_+(A).$ Therefore $C$ must see $A$.  There is no way to place $C$ such that $B$ sees $C$, $C$ sees $A$, and $A$ doesn't see $C.$ An attempt to place $C$ is shown on the left in Figure~\ref{fig:cond2-rects}.

\begin{figure}[H]
    \centering
    \includegraphics[width=\textwidth]{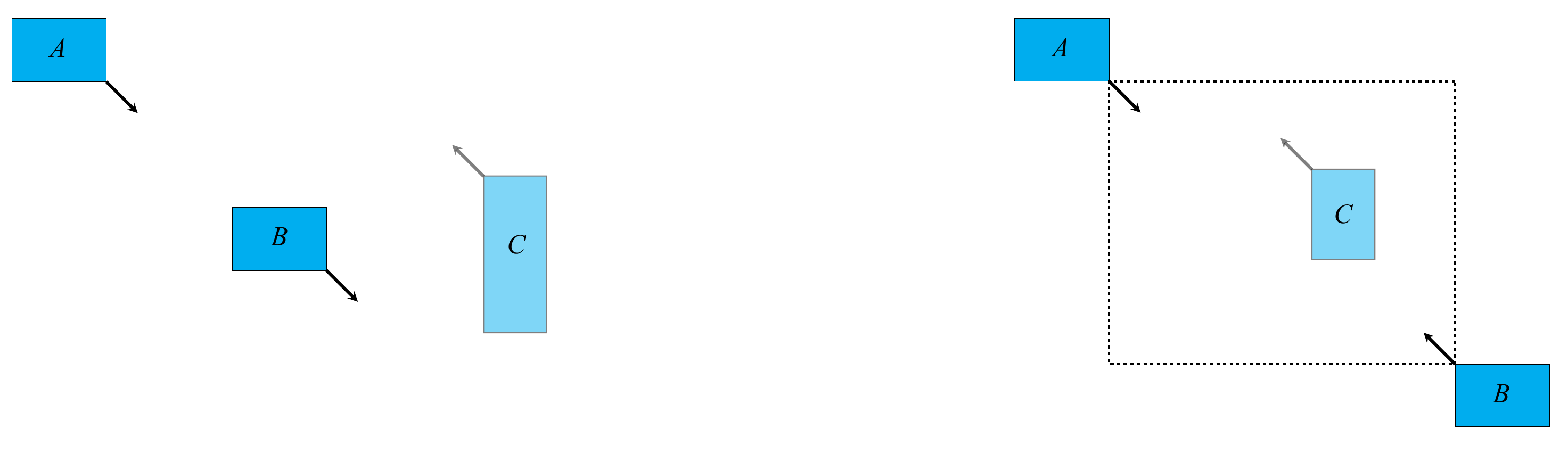}
    \caption{CRV representations of two cases of Lemma~\ref{K6 lemma}.}
    \label{fig:cond2-rects}
\end{figure}

\medskip

\noindent 3. Suppose $A$ and $B$ face opposite directions and their out-neighborhoods are not disjoint. Then they both see an additional rectangle $C$, so their viewing regions must intersect. Thus $A$ and $B$ must be placed so they can see each other, as shown on the right in Figure~\ref{fig:cond2-rects}. However, no additional rectangles can be placed in the intersection of their viewing neighborhoods without blocking the sight between $A$ and $B.$
\end{proof}

\begin{theorem}
\label{no k6}
    $K_6$ is not a CRVG.
\end{theorem}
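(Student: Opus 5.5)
We argue by contradiction, counting edges in the directed graph of a hypothetical CRV representation $S$ of $K_6$. Each of the $\binom{6}{2}=15$ edges is realized by at least one directed sight, so the out-degrees of the six rectangles sum to at least $15$. By Corollary~\ref{outdegree-lemma}, each rectangle has out-degree at most $3$ inside the clique. Since $6\cdot 3=18\geq 15$, counting alone does not finish, but it forces structure. Let $t$ be the number of rectangles of out-degree exactly $3$. Then the out-degree sum is at most $3t+2(6-t)=12+t$, so $t\geq 3$.

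The next step uses Lemma~\ref{K6 lemma} on three rectangles $A,B,C$ of out-degree $3$. Each out-neighborhood has size $3$ and lies in the remaining $5$ rectangles, which is a set of size at most $5$. Two of these out-neighborhoods can be disjoint only if their union, of size $6$, avoids both owners. But $N_+(A)\cup N_+(B)\subseteq S\setminus\{A,B\}$ has size at most $4$, so disjointness is impossible. Hence, by parts 2 and 3 of Lemma~\ref{K6 lemma}, any two out-degree-$3$ rectangles must face \emph{perpendicular} directions.

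Among three rectangles, two facing perpendicular directions pairwise is possible (for example S, W, S fails, but S, W, N fails too). In fact, among three directions drawn from $\{N,E,S,W\}$ that are pairwise perpendicular there is a contradiction: perpendicularity means one is in $\{N,S\}$ and the other in $\{E,W\}$, and three elements cannot pairwise alternate between two classes. So $t\leq 2$, contradicting $t\geq 3$.

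The main obstacle I expect is checking that the counting is airtight. An edge seen in both directions only increases the out-degree sum, so the inequality $\sum \deg^+ \geq 15$ holds regardless, and the bound $t\geq 3$ follows. I also need Lemma~\ref{K6 lemma} to apply to every pair of out-degree-$3$ rectangles, which it does as stated. If the pigeonhole argument on the sizes of the neighborhoods felt too quick, I would double-check that $N_+(A)$ excludes $A$ (it does by definition) and that the out-neighborhoods lie within the clique. Here the clique is all of $S$, so the union of $N_+(A)$ and $N_+(B)$ lies in the $4$ rectangles other than $A$ and $B$ plus possibly each other. More carefully, $N_+(A)\subseteq S\setminus\{A\}$ and $N_+(B)\subseteq S\setminus\{B\}$, so the union lies in $S$, which has $6$ elements. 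Disjointness of two $3$-sets inside $S$ is then possible only if $B\in N_+(A)$ and $A\in N_+(B)$. That would be a $2$-cycle: $A$ sees $B$ and $B$ sees $A$. So the refined argument must handle that case. Two rectangles facing the same direction cannot see each other mutually. For opposite directions, mutual sight is possible (the right of Figure~\ref{fig:cond2-rects}), and I would then argue directly that the remaining $4$ rectangles split as $2$ and $2$ between the two viewing regions. Each of those pairs must then be adjacent across regions, and the blocking geometry forbids this. This case is the real work.
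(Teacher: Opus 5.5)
\textbf{There is a genuine gap at the central step.}

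You first justify that two out-degree-$3$ rectangles cannot have disjoint out-neighborhoods by $N_+(A)\cup N_+(B)\subseteq S\setminus\{A,B\}$. That inclusion is false, because $B$ may lie in $N_+(A)$. As you notice at the end, two disjoint $3$-sets $N_+(A)\subseteq S\setminus\{A\}$ and $N_+(B)\subseteq S\setminus\{B\}$ must cover all of $S$, and this happens exactly when $A$ and $B$ see each other. So neither ``any two out-degree-$3$ rectangles face perpendicular directions'' nor $t\le 2$ is established.

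Your repair handles the same-direction case correctly. Two rectangles facing the same direction cannot see each other mutually, so by part 2 of Lemma~\ref{K6 lemma} no two out-degree-$3$ rectangles face the same direction. The opposite-direction case cannot be avoided, however: any three distinct directions from $\{N,E,S,W\}$ contain an opposite pair. For that case you only assert that the other four rectangles split $2+2$ and that ``blocking geometry forbids'' the required cross adjacencies. No argument is given, and this is precisely the configuration the theorem has to exclude.

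The missing idea is that a mutual sight feeds back into the counting. If $A$ and $C$ see each other, the edge $ac$ is bidirected, so the out-degrees sum to at least $16$ and $t\ge 4$. The paper iterates exactly this:
\begin{enumerate}
\item Among three out-degree-$3$ rectangles, two share an axis, so by pigeonhole some pair $A,C$ has disjoint out-neighborhoods. This gives the bidirected edge and a fourth out-degree-$3$ rectangle $D$.
\item By part 1 of Lemma~\ref{K6 lemma}, $N_+(B)$ and $N_+(D)$ each meet both $N_+(A)$ and $N_+(C)$. So $B$ and $D$ lie on the axis perpendicular to $A$ and $C$, and therefore have disjoint out-neighborhoods.
\item This gives a second bidirected edge and a sum of at least $17$. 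A fifth out-degree-$3$ rectangle would then have to be perpendicular to both axes, which is impossible.
\end{enumerate}

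Combined with your same-direction observation, the finish is even shorter. Out-degree-$3$ rectangles have pairwise distinct directions, and every opposite pair among them is bidirected. So $t\ge 3$ forces an opposite pair and hence $t\ge 4$. Four distinct directions give two bidirected edges, so the sum is at least $17$ and $t\ge 5$, which is impossible with only four directions. No new blocking argument is needed.
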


\begin{proof}
By way of contradiction, suppose $S$ is a CRV representation of $K_6.$ By Corollary~\ref{outdegree-lemma}, a corner rectangle in $S$ has out-degree at most $3.$

As there are $15$ edges in $K_6,$ the sum of the out-degrees of its vertices is at least 15.  If it is exactly 15, the sequence of out-degrees of $K_6$ must be either $(3,3,3,2,2,2)$, $(3,3,3,3,2,1)$, or $(3,3,3,3,3,0)$. Thus at least $3$ rectangles in $S$ must have out-degree $3$, even if the total out-degree of $S$ is greater than 15. Call these $3$ rectangles $A$, $B$, and $C.$

By Lemma~\ref{K6 lemma}, if the out-neighborhoods of two rectangles with out-degree $3$ intersect, those rectangles must face in perpendicular directions.  By the Pigeonhole Principle, this implies that two of these rectangles, say $A$ and $C$, have disjoint out-neighborhoods.  Without loss of generality, the out-neighborhoods and directions of $A$, $B$, and $C$ are given by Figure~\ref{fig:no-k6-case2}.

\begin{figure}[H]
    \centering
    \includegraphics[width = 5cm]{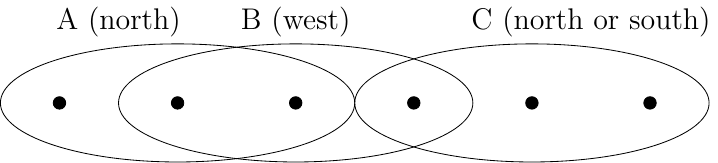}
    \caption{A Venn diagram of the out neighborhoods of $A$, $B$, and $C$.}
    \label{fig:no-k6-case2}
\end{figure}

Since no rectangle can be in its own out-neighborhood and the out-neighborhoods of $A$ and $C$ cover all the rectangles, $A$ must be in $N_+(C)$ and $C$ must be in $N_+(A)$. Thus the edge between $A$ and $C$ is bi-directed, and the total out-degree of the graph is at least $16$. Therefore a fourth rectangle $D$ must have out-degree $3$ since the possible out-degree sequences which add up to 16 are $(3,$ $3,$ $3,$ $3,$ $2,$ $2)$, and $(3,$ $3,$ $3,$ $3,$ $3,$ $1)$. 

\begin{figure}[H]
    \centering
    \includegraphics[width = 5cm]{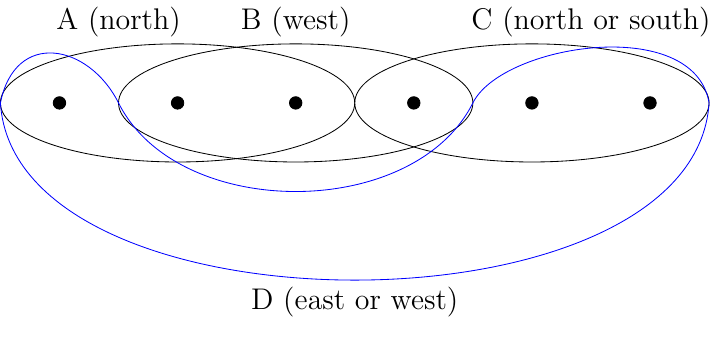}
    \caption{A Venn diagram with $A$, $B$, $C$, and $D$.}
    \label{fig:no-k6-case3}
\end{figure}

Since $N_+(D)$ must intersect $N_+(A)$ or $N_+(C)$, by Lemma~\ref{K6 lemma}, $D$ must face a direction perpendicular to $A$ or $C$ and is thus an east or west rectangle. Then $N_+(D)$ must be disjoint from $N_+(B)$. This is shown in Figure~\ref{fig:no-k6-case3}. By the same logic as before, $B$ must be in $N_+(D)$ and $D$ must be in $N_+(B).$ Thus the edge between $B$ and $D$ is bi-directed, and the total out-degree of the graph is at least $17.$ Then a fifth rectangle $E$ must have out-degree $3.$ Since $N_+(E)$ must intersect either $N_+(A)$ or $N_+(C)$, and either $N_+(B)$ or $N_+(D)$, $E$ must be either east or west and either north or south.  This is a contradiction.
\end{proof}

\begin{corollary}
    A CRVG with $6$ vertices has at most $14$ edges, a CRVG with $7$ vertices has at most $19$ edges, and a CRVG with $8$ vertices has at most $25$ edges, and these bounds are tight. \label{CRVG-678-lemma}
\end{corollary}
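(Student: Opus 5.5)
Upper bounds come from Theorem~\ref{no k6} by vertex deletion; tightness comes from explicit representations.

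\textbf{$n=6$.} By Theorem~\ref{no k6}, $K_6$ is not a CRVG, so a CRVG on $6$ vertices misses at least one edge and has at most $14$ edges.

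\textbf{$n=7$.} Let $G$ be a CRVG on $7$ vertices. Every induced subgraph of a CRVG is a CRVG: delete the corresponding rectangle, and no sight is lost, since removing a rectangle only removes obstacles. So each of the $7$ induced subgraphs $G-v$ is a CRVG on $6$ vertices and has at most $14$ edges. Each edge of $G$ survives in exactly $5$ of these subgraphs, so $5|E(G)|\le 7\cdot 14=98$. This gives $|E(G)|\le 19$.

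\textbf{$n=8$.} The same averaging gives $6|E(G)|\le 8\cdot 19=152$, so $|E(G)|\le 25$. (Equivalently, $G$ has a vertex of degree at least $\lceil 2|E|/8\rceil$, and deleting it gives the same bound.)

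Before relying on deletion, I would first check that deleting a rectangle cannot destroy a sight. This holds because the definition of sight only forbids intersections with other rectangles, so fewer rectangles can only create sights, never remove them. The catch is that deletion could create new sights, so $G-v$ is only a subgraph of the graph represented by the remaining rectangles, not necessarily equal to it. That is still enough: the represented graph is a CRVG on $6$ (respectively $7$) vertices with at least as many edges as $G-v$, so the bound transfers.

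For tightness we need explicit CRV representations of $K_6-e$, of a graph on $7$ vertices with $19$ edges, and of a graph on $8$ vertices with $25$ edges. The natural candidates are these:
\begin{itemize}
\item For $n=6$: modify the $K_5$ representation of Figure~\ref{fig:K5-CRVG} by adding one rectangle that sees four of the five.
\item For $n=8$: $25=\binom{8}{2}-3$, which suggests $K_{2,2,2,2}$ ($24$ edges) plus one edge. Since $K_{2,2,2,2}$ is a CRVG by Proposition~\ref{4partite}, I would try to make one part's two rectangles see each other, for instance by widening one so the other sees its side, in the spirit of the path construction $K_{m,n}+P_n$.
\item For $n=7$: $19=21-2$. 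I would try $K_{2,2,2,1}$ ($18$ edges) plus one intra-part edge, built the same way, or delete a suitable vertex from the $8$-vertex construction.
\end{itemize}

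The main obstacle is producing and verifying these representations, especially the $8$-vertex one with $25$ edges. The upper bounds follow routinely from Theorem~\ref{no k6}, but tightness needs a concrete figure in which every claimed sight is checked and no extra blocking occurs. I would build from the 4-partite figure and add the extra edge locally, near the extreme rectangles of one monotone part, where adjusting one rectangle's extent is least likely to block other sights.
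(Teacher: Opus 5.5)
Your upper bounds are correct, and the averaging step ($5|E(G)|\le 7\cdot 14$, then $6|E(G)|\le 8\cdot 19$) is a genuinely different route from the paper's. The paper instead observes that a graph over the bound has a complement with at most two edges, and deletes their endpoints to expose a $K_6$. Your remark that deleting a rectangle only removes obstacles, so the remaining rectangles represent a supergraph of $G-v$, is exactly the heredity fact the paper uses without comment.

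The gap is tightness, which is part of the statement. You give no representation, while the paper exhibits nested ones in Figure~\ref{fig:maximal-CRVGs}. Your $n=6$ suggestion also fails as stated. If an added rectangle $F$ sees four rectangles of the $K_5$, those four lie in $N_+(F)$, so they form a monotone set by Lemma~\ref{monotone-out-nbhd}. They are also pairwise adjacent, which contradicts Lemma~\ref{no-monotone-K4}; this is Corollary~\ref{outdegree-lemma} applied to the $K_5$ formed by $F$ and them. So some old rectangle must see $F$, and the $K_5$ picture would have to be reworked rather than extended.

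The efficient repair is the idea you only mention for $n=7$: just one construction is needed. A $25$-edge graph on $8$ vertices must be $K_6$-free, so its $3$-edge complement has independence number at most $5$. This forces the complement to be $3K_2\cup 2K_1$, so the target is $K_{2,2,2,1,1}$ (your ``$K_{2,2,2,2}$ plus one edge''). From a representation of it, delete a rectangle of degree $6$. Since sights survive deletion, the remaining rectangles represent a graph with at least, hence exactly, $19$ edges. Deleting from that a rectangle of minimum degree (at most $5$, as the degrees sum to $38$) leaves exactly $14$. Thus the missing ingredient is one explicit, fully checked CRV representation of $K_{2,2,2,1,1}$.
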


\begin{proof}
    Any graph that violates the above conditions contains $K_6$ as a subgraph.  In particular, if $G$ has $8$ vertices and $26$ edges, then $G$ is missing two edges, say $xy$ and $wz$ if the edges are disjoint, and $xy$ and $yz$ if they're not.  Deleting $x$ and $z$ yields a $K_6$ in both cases.  
    
    Examples of CRV representations of graphs with $6$ vertices and $14$ edges, $7$ vertices and $19$ edges, and $8$ vertices and $25$ edges are shown in Figure~\ref{fig:maximal-CRVGs}.  In this figure, the blue rectangles represent a graph with $6$ vertices and $14$ edges, the blue and orange rectangles represent a graph with $7$ vertices and $19$ edges, and all of the rectangles represent a graph with $8$ vertices and $25$ edges.
\end{proof}

\begin{figure}[!ht]
      \centering  \includegraphics[width=.4\textwidth]{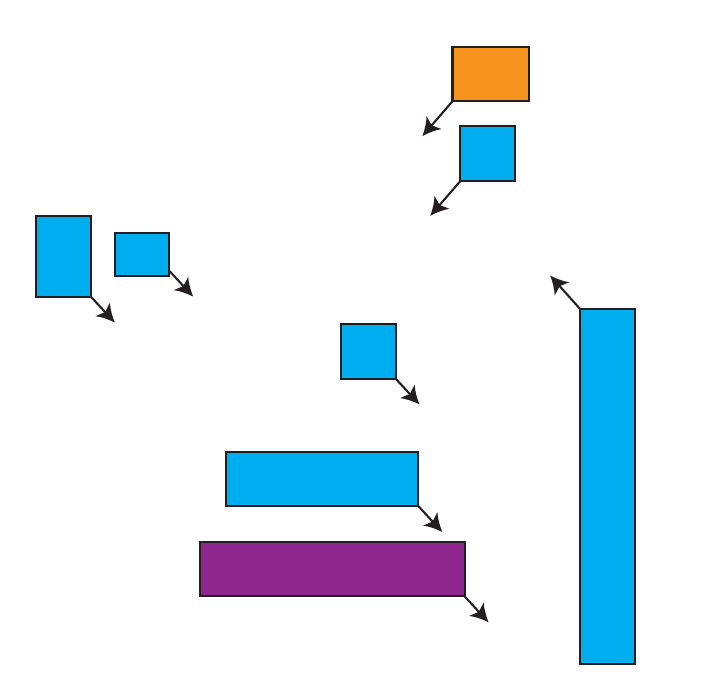}
      \caption{CRV representations of graphs with $6$ vertices and $14$ edges, $7$ vertices and $19$ edges, and $8$ vertices and $25$ edges.}
      \label{fig:maximal-CRVGs}
\end{figure}

Since we know that $K_6$ is not a CRVG and we wish to find CRVGs with as many edges as possible, Tur\'an's Theorem is useful.

\begin{theorem}[Tur\'an, 1941 \cite{westgraphtheory}]
\label{Turan}
The maximum number of edges in a graph with no $k$-clique is the number of edges in the balanced complete $(k-1)$-partite graph.
\end{theorem}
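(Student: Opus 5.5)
This is the classical theorem of Tur\'an, which the paper cites rather than proves. If a self-contained argument were wanted, I would use Zykov symmetrization together with a balancing step, arguing directly about an extremal graph. Fix $n$ and $k\ge 2$, and let $G$ be a graph on $n$ vertices with no $k$-clique and the maximum possible number of edges. The goal is to show that $G$ is a complete multipartite graph with at most $k-1$ parts whose sizes differ by at most one. Such a graph is the balanced complete $(k-1)$-partite graph, allowing empty parts when $n<k-1$.

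\textbf{Step 1: $G$ is complete multipartite.} I want to show that non-adjacency is an equivalence relation on $V(G)$; only transitivity needs proof. Suppose $x\not\sim y$ and $x\not\sim z$ but $y\sim z$. The tool is cloning. Deleting a vertex $u$ and adding a twin of $v$ (same neighbourhood as $v$, not adjacent to $v$) cannot create a $k$-clique, because a clique contains at most one of two non-adjacent twins.
\begin{itemize}
\item If $\deg y>\deg x$, replace $x$ by a twin of $y$. Since $x\not\sim y$, this changes the edge count by $\deg y-\deg x>0$, contradicting maximality.
\item The case $\deg z>\deg x$ is symmetric.
\item Otherwise $\deg x\ge \deg y$ and $\deg x\ge\deg z$. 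Replace both $y$ and $z$ by twins of $x$. We lose $\deg y+\deg z-1$ edges (the edge $yz$ is counted twice) and gain $2\deg x$, since $x$ is adjacent to neither $y$ nor $z$. The net gain is at least $1$, again a contradiction.
\end{itemize}
So the non-adjacency classes are independent sets with all edges between them, and $G$ is complete multipartite. Choosing one vertex from each part gives a clique, so there are at most $k-1$ nonempty parts.

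\textbf{Step 2: optimizing the part sizes.} For a complete multipartite graph with part sizes $a_1,\dots,a_t$, the number of edges is $\binom{n}{2}-\sum_i\binom{a_i}{2}$. Two adjustments then finish the proof.
\begin{itemize}
\item If $t<k-1$ and some part has at least two vertices, split off one vertex into a new part. This gains edges and still has at most $k-1$ parts. Hence either $t=k-1$ or every part is a singleton (which forces $n<k-1$).
\item If two parts have sizes $a\ge b+2$, move one vertex from the larger to the smaller. The edge count changes by $(a-1)-b>0$.
\end{itemize}
Maximality therefore forces $k-1$ parts (or $n$ singletons) with sizes differing by at most one, which is the balanced complete $(k-1)$-partite graph. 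Conversely, that graph has no $k$-clique, so its edge count is attained.

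The main obstacle is Step 1, specifically the third case. There one must check that the edge $yz$ is counted correctly in the loss and that the two inserted twins of $x$ are non-adjacent to each other, so the graph stays $K_k$-free. Everything else is routine bookkeeping.
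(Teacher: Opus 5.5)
Your proof is correct, but it cannot follow the paper's proof, because the paper gives none: it quotes Tur\'an's theorem from West's textbook.

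The details of your argument hold. Cloning preserves $K_k$-freeness because a clique contains at most one vertex from a set of pairwise non-adjacent twins. The edge counts in Step~1 are right; in the third case $\deg_{G-\{y,z\}}(x)=\deg x$ precisely because $x\not\sim y$ and $x\not\sim z$, so the net change is $2\deg x-\deg y-\deg z+1\ge 1$. The splitting and balancing moves in Step~2 then force exactly $k-1$ near-equal parts, with empty parts when $n<k-1$.

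The textbook proof in the cited source goes differently. It shows by induction on $r$ that for every $K_{r+1}$-free graph $G$ there is an $r$-partite graph on the same vertex set with at least as many edges. Take a vertex $x$ of maximum degree $\Delta$ and apply induction to the $K_r$-free graph $G[N(x)]$. Then join the resulting $(r-1)$-partite graph completely to the independent set $V(G)\setminus N(x)$, using $\sum_{v\notin N(x)}\deg v\le \Delta(n-\Delta)$. The same balancing step as your Step~2 finishes.

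Your Zykov symmetrization reasons only about an extremal graph. In return it exposes the complete multipartite structure directly and gives uniqueness of the extremal graph essentially for free. The maximum-degree induction applies to every $K_{r+1}$-free graph, not just extremal ones, and needs no cloning surgery, but uniqueness there requires tracking equality cases.

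For the paper's use, the case $k=6$ (the bound $\lfloor 2n^2/5\rfloor$ and the value $32$ at $n=9$), either route suffices.
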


\begin{corollary}
    There is at most one CRVG on $9$ vertices with $32$ edges, and this is the maximum possible number of edges for a CRVG on $9$ vertices. \label{32-edges}
\end{corollary}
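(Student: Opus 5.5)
Since $K_6$ is not a CRVG by Theorem~\ref{no k6}, every CRVG is $K_6$-free. I will apply Tur\'an's Theorem (Theorem~\ref{Turan}) with $k=6$: a graph on $9$ vertices with no $6$-clique has at most as many edges as the balanced complete $5$-partite graph on $9$ vertices.

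That graph has parts of sizes $2,2,2,2,1$. Its complement is a disjoint union of four copies of $K_2$ and one isolated vertex, so it has $\binom{9}{2}-4=36-4=32$ edges. Hence every CRVG on $9$ vertices has at most $32$ edges.

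For the first part of the statement, I will use the uniqueness half of Tur\'an's theorem: the extremal $K_6$-free graph with $32$ edges is unique up to isomorphism. This uniqueness is standard, though Theorem~\ref{Turan} as quoted only states the count. If needed, I can argue it directly. A $9$-vertex graph with $32$ edges has a complement with exactly $4$ edges. If those $4$ edges do not form a matching, two of them share a vertex $y$, say $xy$ and $yz$. Deleting $y$ leaves $8$ vertices whose complement has at most $2$ edges. Deleting one endpoint of each of those remaining non-edges leaves at least $6$ vertices spanning a clique, which would contain $K_6$, a contradiction. So the complement is a perfect matching on $8$ vertices plus one isolated vertex. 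That is exactly $K_{2,2,2,2,1}$, which proves "at most one."

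The main obstacle is the "maximum" claim, if it is meant as attained. That would require showing $K_{2,2,2,2,1}$ is a CRVG. I would try to build it by extending the representation of the $8$-vertex, $25$-edge graph in Figure~\ref{fig:maximal-CRVGs}, or by adapting the $4$-partite construction of Proposition~\ref{4partite} with an extra universal rectangle. However, the corollary's wording, "at most one," suggests the authors only assert the upper bound here and defer realizability. So I would present the bound and the uniqueness argument above, and remark that whether $K_{2,2,2,2,1}$ is realizable remains to be checked.
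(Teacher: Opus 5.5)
Your proof is correct. The upper bound of $32$ comes from Theorem~\ref{no k6} plus Tur\'an's Theorem, exactly as in the paper. Your reading of ``maximum'' as an upper bound rather than an attained value also matches the paper, which explicitly leaves open whether $K_{2,2,2,2,1}$ is a CRVG, so no construction is needed.

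Where you differ is the uniqueness step. The paper counts components of $\overline{G}$:
\begin{itemize}
\item a $4$-edge graph has at most one cycle, so $\overline{G}$ has $6$ components if it contains a cycle and $5$ if it is a forest;
\item choosing one vertex per component, plus two nonadjacent vertices from any tree component with at least two edges, gives an independent $6$-set in $\overline{G}$, hence a $K_6$ in $G$;
\item this works unless $\overline{G}$ is four disjoint edges plus an isolated vertex.
\end{itemize}
You instead split only on whether the four non-edges form a matching. In the non-matching case you exhibit at most $3$ vertices ($y$ plus one endpoint of each remaining non-edge) that meet every non-edge; the remaining vertices form a clique of size at least $6$. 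In effect you bound the vertex cover number of $\overline{G}$. This avoids the cycle and component case analysis and is slightly more direct. The paper's component count is equally elementary but needs the extra observation about cycles. Both routes give $G\cong K_{2,2,2,2,1}$.
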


\begin{proof}
    By Tur\'an's Theorem (\ref{Turan}), a CRVG on $9$ vertices can have no more than $32$ edges.
    Let $G$ be a graph with 9 vertices and 32 edges.  Consider $\overline{G}$, the complement of $G$, which has 9 vertices and 4 edges.  No graph with 4 edges has more than one cycle.  If $\overline{G}$ has no cycles, it has 5 components, and if it has one cycle, it has 6 components. If $\overline{G}$ has 6 components, then $G$ contains a $K_6$ using one vertex from each component of $\overline{G}$.  If $\overline{G}$ has 5 components, it's a forest.  If $\overline{G}$ has 5 components including one component $C$ with more than one edge, then $G$ contains a $K_6$ using one vertex from each component of $\overline{G}$ and two from $C$.  Finally, if $\overline{G}$ has 5 components but none with more than one edge, then $G \cong K_{2,2,2,2,1}$.
\end{proof}

We don't know whether $K_{2,2,2,2,1}$ is a CRVG.  We pose this as an open question below.

\begin{proposition}
    A CRVG with the maximal number of edges on $n$ vertices has between $\lfloor\frac{3n^2}{8}\rfloor$ and $\lfloor \frac{2n^2}{5}\rfloor$ edges, which are the number of edges of the balanced complete 4-partite and balanced complete 5-partite graphs on $n$ vertices.
\end{proposition}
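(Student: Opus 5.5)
The plan has two halves: a lower bound from an explicit construction and an upper bound from Tur\'an's Theorem.

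For the lower bound, I will use Proposition~\ref{4partite}. It says every complete 4-partite graph $K_{j,k,l,m}$ is a CRVG, so in particular the balanced one is. Take part sizes as equal as possible, so each is $\lfloor n/4\rfloor$ or $\lceil n/4\rceil$. The maximal CRVG on $n$ vertices then has at least as many edges as this Tur\'an graph $T(n,4)$. It remains to check that $T(n,4)$ has $\lfloor 3n^2/8\rfloor$ edges. Write $n=4q+r$ with $0\le r\le 3$. The edge count is
\[
\binom{n}{2}-\left[(4-r)\binom{q}{2}+r\binom{q+1}{2}\right].
\]
Running through the four residues $r$ shows this equals $\lfloor 3n^2/8\rfloor$. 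For example, $r=0$ gives $6q^2=3n^2/8$, and $r=2$ gives $6q^2+6q+1=\lfloor 3(4q+2)^2/8\rfloor$. The cases $r=1,3$ are similar routine arithmetic.

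For the upper bound, Theorem~\ref{no k6} says $K_6$ is not a CRVG. CRVGs are closed under taking induced subgraphs: deleting rectangles from a representation can only remove blockers, so an edge could in principle appear. The upper-bound argument does not need this closure, though. What it needs is that a CRVG containing $K_6$ as a subgraph gives a representation in which six rectangles pairwise see each other. Restricting to those six rectangles, sight between them is preserved or gained, never lost, since deleting other rectangles only removes obstructions. Hence these six rectangles form a CRV representation of $K_6$, which contradicts Theorem~\ref{no k6}.

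Therefore every CRVG is $K_6$-free. By Tur\'an's Theorem (Theorem~\ref{Turan}) with $k=6$, it has at most as many edges as the balanced complete 5-partite graph $T(n,5)$. A computation analogous to the one above, with $n=5q+r$, should give $\lfloor 2n^2/5\rfloor$. I expect the only real obstacle to be verifying that floor identity across all five residues mod 5. I would check it by writing the count as
\[
\frac{2n^2}{5}-\frac{r(5-r)}{10}
\]
and observing that $0\le r(5-r)/10<1$ and that this correction is exactly the fractional part of $2n^2/5$. The same device handles the 4-partite case, where the count is
\[
\frac{3n^2}{8}-\frac{r(4-r)}{8}.
\]
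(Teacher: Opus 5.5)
Your proof is correct and follows the paper's argument: $K_6$-freeness from Theorem~\ref{no k6} plus Tur\'an's Theorem for the upper bound, and Proposition~\ref{4partite} for the lower bound. Two of your additions fill in things the paper leaves implicit: the restriction step (a witnessing rectangle of influence survives deleting other rectangles, so six mutually adjacent rectangles alone represent $K_6$) and the explicit check of the floor formulas. Do drop the aside asserting that CRVGs are closed under induced subgraphs, since, as you note yourself, deleting rectangles can create new sight lines. That claim is therefore unjustified, though your argument never uses it.
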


\begin{proof}
    By Theorem \ref{no k6}, a CRVG cannot have $K_6$ as a subgraph. Then by Tur\'{a}n's Theorem (\ref{Turan}), the greatest possible number of edges in a CRVG is at most the number of edges in the complete 5-partite graph, which is $\lfloor \frac{2n^2}{5}\rfloor.$ The complete 4-partite graph, which has $\lfloor\frac{3n^2}{8}\rfloor$ edges, is a CRVG, as given by Proposition \ref{4partite}.
\end{proof}

Note that by Lemma~\ref{CRVG-678-lemma}, A CRVG with the maximal number of edges has exactly $\lfloor \frac{2n^2}{5} \rfloor$ edges for $n=6$, $n=7$, and $n=8$, and by Corollary~\ref{32-edges}, the graph $K_{2,2,2,2,1}$ is the only CRVG with 9 vertices which might have $\lfloor \frac{2n^2}{5} \rfloor$ edges.  The construction in Figure~\ref{fig:maximal-CRVGs} yields a CRVG with $\lfloor \frac{2n^2}{5} \rfloor$ edges for $n=5$, $n=6$, $n=7$, and $n=8$, but this construction no longer has $\lfloor \frac{2n^2}{5} \rfloor$ edges for $n>8$.  For $n >14$, the construction in Figure~\ref{fig:4-partite} has more edges than the construction in Figure~\ref{fig:maximal-CRVGs}, but still might not be maximal.  If the construction in Figure~\ref{fig:4-partite} is not maximal for larger values of $n$, a new construction is needed.

\subsection{Classification of CRVGs, RVGs, and RIGs}
\label{classification}

\begin{figure}
    \centering
    \includegraphics[width=6cm]{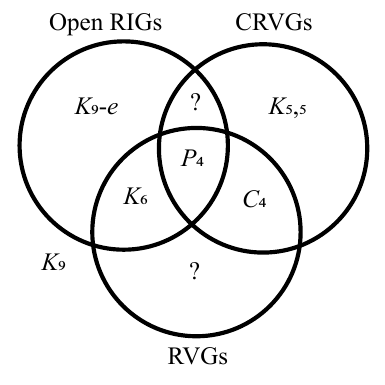}
    \caption{Classification of open RIGs, CRVGs, and RVGs}
    \label{venn_diagram}
\end{figure}

Using our previous results on CRVGs, in this subsection we classify CRVGs, RVGs, and open RIGs by containment.  We verify the example graphs shown in six of the eight categories in the Venn diagram in Figure~\ref{venn_diagram}.  We leave the remaining two categories as open questions. 

\begin{figure}
    \centering
    \includegraphics[width=4cm]{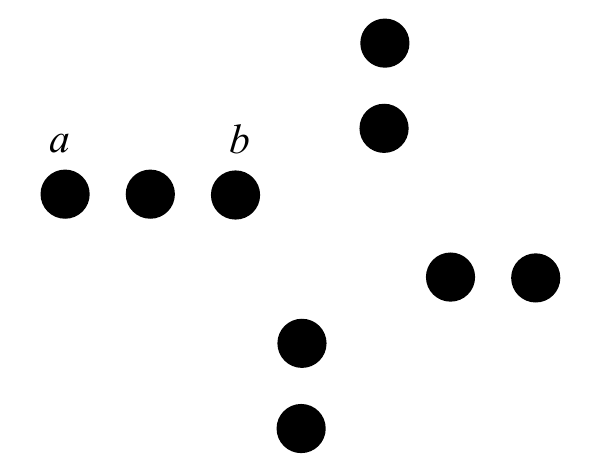}
    \caption{An open RIG representation of $K_9-e.$ The missing edge is $a \sim b.$}
    \label{fig:K9-RIG}
\end{figure}

A graph which is an open RIG but not an RVG nor a CRVG is $K_9-e.$ An open RIG representation of this graph is shown in Figure~\ref{fig:K9-RIG}.  It is not an RVG because it does not meet the edge bound of \cite{hutchinson1999representations}. It is not a CRVG because it contains $K_6$ as a subgraph (Theorem~\ref{no k6}).

A graph which is a CRVG but neither an open RIG nor an RVG is the complete bipartite graph $K_{5,5}.$ It is a CRVG by Proposition~\ref{complete_bipartite}. It is not an open RIG by \cite{liotta1998rectangle}. It is not an RVG by \cite{dean1997rectangle}.

A graph which is a CRVG and RVG but not an open RIG is the cycle on four vertices $C_4$. Cycles with more than three vertices are not open RIGs by \cite{liotta1998rectangle}. Four rectangles in a grid provides an RVG representation. A CRV representation is described in Proposition \ref{SCRVG_cycle}.

A graph which is a CRVG, an RVG, and an open RIG is simple path.

A graph which is an open RIG and an RVG but not a CRVG is $K_6.$ Theorem \ref{no k6} shows $K
_6$ is not a CRVG. Complete graphs up to $K_8$ are open RIGs by \cite{liotta1998rectangle} and RVGs by \cite{hutchinson1999representations}.
A graph which is not an open RIG, not a CRVG, and not an RVG is $K_9$ by \cite{liotta1998rectangle}, \cite{hutchinson1999representations}, and Theorem \ref{no k6}.

\section{Open Questions}

We conclude the paper with a list of open questions.

\begin{enumerate}

\item Are all closed RIGs also SCRVGs?  Is the construction of a maximal closed RIG given in~\cite{standardboxes} an SCRVG?

\item Is $K_{2,2,2,2,1}$ a CRVG?  More generally, what is the largest number of edges of a CRVG with $n$ vertices for $n>8$?

\item Filling in the last two spots in Figure~\ref{venn_diagram}, is there an example of a graph which is an open RIG and a CRVG but not an RVG?  Is there an example of a graph which is an RVG but not an open RIG and not a CRVG?

\item All planar graphs are RVGs \cite{wismath1989bar}.  Are all planar graphs CRVGs?

\item We might consider additional variations of these graphs whose rectangles can see from a fixed number of multiple corners at once.  For example, rectangles could each see from two corners, or three or four.  How many edges can a 2-corner CRVG have?  Note that if each rectangle can see from all of its corners, the representation is similar to an RIG representation, with rectangles instead of points.

\item We might also consider a variation where the number of corners from which an individual rectangle can see is unrestricted but the total number of eyes is a fixed function of the number of rectangles in a representation. For example, given $n$ rectangles what is the maximum number of edges that can be achieved by $2n$ eyes? 

\item How might these results extend to boxes in three dimensions?

\end{enumerate}

\bibliography{refs}{}
\bibliographystyle{plain}

\end{document}